\newif\ifdraft
\definecolor{labelkey}{gray}{0.5}
\newlength{\myarrowsize} 
\newenvironment{diagram*}[2]{%
\[%
\begin{tikzpicture}[>=cmto,baseline=(current bounding box.center),%
	to/.style={->,font=\scriptsize,cap=round},%
	into/.style={cmhook->,font=\scriptsize,cap=round},%
	onto/.style={-cmonto,font=\scriptsize,cap=round},%
	math/.style={matrix of math nodes, row sep=#2, column sep=#1,%
		text height=1.5ex, text depth=0.25ex}]%
}{%
\end{tikzpicture}%
\]%
\ignorespacesafterend%
}
\newcommand{\Dmod}{\mathscr{D}}
\newcommand{\Mmod}{\mathcal{M}}
\newcommand{\derR}{\mathbf{R}}
\newcommand{\shH}{\mathcal{H}}
\newcommand{\ZZ}{\mathbb{Z}}
\newcommand{\QQ}{\mathbb{Q}}
\newcommand{\CC}{\mathbb{C}}
\newcommand*{\sheafhom}{\mathscr{H}\kern -.5pt om}
\newcommand{\shf}[1]{\mathscr{#1}}
\def\overbar#1#2#3{{%
	\setbox0=\hbox{\displaystyle{#1}}%
	\dimen0=\wd0
	\advance\dimen0 by -#2 
	\vbox {\nointerlineskip \moveright #3 \vbox{\hrule height 0.3pt width \dimen0}%
		\nointerlineskip \vskip 1.5pt \box0}%
}}
\newcommand{\shF}{\shf{F}}
\newcommand{\shE}{\shf{E}}
\newcommand{\shO}{\shf{O}}
\let\@@seccntformat\@seccntformat
\renewcommand*{\@seccntformat}[1]{%
  \expandafter\ifx\csname @seccntformat@#1\endcsname\relax
    \expandafter\@@seccntformat
  \else
    \expandafter
      \csname @seccntformat@#1\expandafter\endcsname
  \fi
    {#1}%
}
\newcommand*{\@seccntformat@subsection}[1]{%
  \textbf{\csname the#1\endcsname.}
}
\let\@paragraph\paragraph
\renewcommand*{\paragraph}[1]{%
	\vspace{0.3\baselineskip}%
	\@paragraph{\textit{#1}}%
}
\newtheorem{theorem}[equation]{Theorem}
\newtheorem*{theorem*}{Theorem}
\newtheorem{lemma}[equation]{Lemma}
\newtheorem*{lemma*}{Lemma}
\newtheorem{corollary}[equation]{Corollary}
\newtheorem{proposition}[equation]{Proposition}
\newtheorem*{proposition*}{Proposition}
\theoremstyle{definition}
\newtheorem{definition}[equation]{Definition}
\newtheorem*{definition*}{Definition}
\newtheorem{remark}[equation]{Remark}
\newtheorem*{example*}{Example}
\newtheorem*{problem*}{Problem}
\theoremstyle{plain}
\newcommand{\theoremref}[1]{\hyperref[#1]{Theorem~\ref*{#1}}}
\newcommand{\lemmaref}[1]{\hyperref[#1]{Lemma~\ref*{#1}}}
\newcommand{\definitionref}[1]{\hyperref[#1]{Definition~\ref*{#1}}}
\newcommand{\propositionref}[1]{\hyperref[#1]{Proposition~\ref*{#1}}}
\newcommand{\conjectureref}[1]{\hyperref[#1]{Conjecture~\ref*{#1}}}
\newcommand{\corollaryref}[1]{\hyperref[#1]{Corollary~\ref*{#1}}}
\newcommand{\exampleref}[1]{\hyperref[#1]{Example~\ref*{#1}}}
\let\old@caption\caption
\renewcommand*{\caption}[1]{%
	\setcounter{figure}{\value{equation}}%
	\stepcounter{equation}%
	\old@caption{#1}\relax%
}
\newcounter{intro}
\newtheorem{intro-conjecture}[intro]{Conjecture}
\newtheorem{intro-corollary}[intro]{Corollary}
\newtheorem{intro-proposition}[intro]{Proposition}
\newtheorem{intro-theorem}[intro]{Theorem}
\newcommand{\parref}[1]{\hyperref[#1]{\S\ref*{#1}}}
\newcommand*\if@single[3]{%
  \setbox0\hbox{${\mathaccent"0362{#1}}^H$}%
  \setbox2\hbox{${\mathaccent"0362{\kern0pt#1}}^H$}%
  \ifdim\ht0=\ht2 #3\else #2\fi
  }
\newcommand*\rel@kern[1]{\kern#1\dimexpr\macc@kerna}
\newcommand*\widebar[1]{\@ifnextchar^{{\wide@bar{#1}{0}}}{\wide@bar{#1}{1}}}
\newcommand*\wide@bar[2]{\if@single{#1}{\wide@bar@{#1}{#2}{1}}{\wide@bar@{#1}{#2}{2}}}
\newcommand*\wide@bar@[3]{%
  \begingroup
  \def\mathaccent##1##2{%
    \if#32 \let\macc@nucleus\first@char \fi
    \setbox\z@\hbox{$\macc@style{\macc@nucleus}_{}$}%
    \setbox\tw@\hbox{$\macc@style{\macc@nucleus}{}_{}$}%
    \dimen@\wd\tw@
    \advance\dimen@-\wd\z@
    \divide\dimen@ 3
    \@tempdima\wd\tw@
    \advance\@tempdima-\scriptspace
    \divide\@tempdima 10
    \advance\dimen@-\@tempdima
    \ifdim\dimen@>\z@ \dimen@0pt\fi
    \rel@kern{0.6}\kern-\dimen@
    \if#31
      \overline{\rel@kern{-0.6}\kern\dimen@\macc@nucleus\rel@kern{0.4}\kern\dimen@}%
      \advance\dimen@0.4\dimexpr\macc@kerna
      \let\final@kern#2%
      \ifdim\dimen@<\z@ \let\final@kern1\fi
      \if\final@kern1 \kern-\dimen@\fi
    \else
      \overline{\rel@kern{-0.6}\kern\dimen@#1}%
    \fi
  }%
  \macc@depth\@ne
  \let\math@bgroup\@empty \let\math@egroup\macc@set@skewchar
  \mathsurround\z@ \frozen@everymath{\mathgroup\macc@group\relax}%
  \macc@set@skewchar\relax
  \let\mathaccentV\macc@nested@a
  \if#31
    \macc@nested@a\relax111{#1}%
  \else
    \def\gobble@till@marker##1\endmarker{}%
    \futurelet\first@char\gobble@till@marker#1\endmarker
    \ifcat\noexpand\first@char A\else
      \def\first@char{}%
    \fi
    \macc@nested@a\relax111{\first@char}%
  \fi
  \endgroup
}
\newcommand{\I}{\mathcal{I}}
\def\cH{{\mathcal H}}
\def\ZZ{{\mathbf Z}}
\def\CC{{\mathbf C}}
\def\QQ{{\mathbf Q}}
\newtheorem*{thmA'}{Theorem~A$^\prime$}
\begin{document}

\vspace{\baselineskip}

\title{On $k$-rational and $k$-Du Bois local complete intersections}

\author[M. Musta\c{t}\v{a}]{Mircea~Musta\c{t}\u{a}}
\address{Department of Mathematics, University of Michigan, 530 Church Street,
Ann Arbor, MI 48109, USA}
\email{{\tt mmustata@umich.edu}}

\author[M.~Popa]{Mihnea~Popa}
\address{Department of Mathematics, Harvard University, 
1 Oxford Street, Cambridge, MA 02138, USA} 
\email{{\tt mpopa@math.harvard.edu}}

\thanks{MM was partially supported by NSF grants DMS-2301463 and DMS-1952399, and MP by NSF grant DMS-2040378.}

\subjclass[2010]{14F10, 14B05, 32S35}
\keywords{Minimal exponent, singularities, $k$-Du Bois, $k$-rational}

\begin{abstract}
We show that local complete intersections with $k$-rational singularities are $k$-Du Bois, as a consequence 
of an injectivity theorem for the duals of the associated graded quotients of the Du Bois complex.
For hypersurfaces, we characterize $k$-rationality in terms of the minimal exponent. We also establish 
some local vanishing results for $k$-rational and $k$-Du Bois singularities. Some of these results have also 
been independently obtained in \cite{FL2}.
\end{abstract}

\maketitle

\makeatletter
\newcommand\@dotsep{4.5}
\def\@tocline#1#2#3#4#5#6#7{\relax
  \ifnum #1>\c@tocdepth 
  \else
    \par \addpenalty\@secpenalty\addvspace{#2}%
    \begingroup \hyphenpenalty\@M
    \@ifempty{#4}{%
      \@tempdima\csname r@tocindent\number#1\endcsname\relax
    }{%
      \@tempdima#4\relax
    }%
    \parindent\z@ \leftskip#3\relax
    \advance\leftskip\@tempdima\relax
    \rightskip\@pnumwidth plus1em \parfillskip-\@pnumwidth
    #5\leavevmode\hskip-\@tempdima #6\relax
    \leaders\hbox{$\m@th
      \mkern \@dotsep mu\hbox{.}\mkern \@dotsep mu$}\hfill
    \hbox to\@pnumwidth{\@tocpagenum{#7}}\par
    \nobreak
    \endgroup
  \fi}
\def\l@section{\@tocline{1}{0pt}{1pc}{}{\bfseries}}
\def\l@subsection{\@tocline{2}{0pt}{25pt}{5pc}{}}
\makeatother

\section{Introduction}
Rational and Du Bois singularities  are two classes of singularities that play an important role in complex algebraic geometry. The notion of \emph{rational singularities}
is of a cohomological nature: a complex algebraic variety $Z$ has this property if for some (any) resolution of singularities $\mu\colon \widetilde{Z}\to Z$, the canonical morphism $\shO_Z \to \derR\mu_*\shO_{\widetilde{Z}}$ is an isomorphism. This is a very useful condition since, if satisfied, the cohomology of a locally free sheaf on $Z$ is isomorphic to that of its pull-back to $\widetilde{Z}$. On the other hand, the notion of \emph{Du Bois singularities} is of a Hodge-theoretic nature. 
Recall that building on Deligne's work \cite{Deligne}, Du Bois associated to any complex algebraic variety $Z$ a filtered complex $\underline{\Omega}_Z^{\bullet}$,
nowadays called the \emph{Du Bois complex} of $Z$. When $Z$ is smooth, this is just the de Rham complex of $Z$, with its ``stupid" filtration, but when $Z$ is singular,
it turns out to have better cohomological properties than the de Rham complex. The $i$-th graded piece $\underline{\Omega}_Z^i$ of the Du Bois complex (suitably shifted) is an element in the bounded derived category of coherent sheaves on $Z$ and it carries a canonical morphism $\Omega_Z^i\to \underline{\Omega}_Z^i$, which is an isomorphism when
$Z$ is smooth. The variety $Z$ has \emph{Du Bois singularities} if the canonical morphism $\shO_Z\to \underline{\Omega}_Z^0$ is an isomorphism. This condition 
is important in birational geometry since projective varieties with Du Bois singularities satisfy a version of Kodaira vanishing. It was shown by Steenbrink
\cite{Steenbrink2}  in the case of isolated singularities, and in full generality by Kov\'acs \cite{Kovacs1} and also later by Saito \cite{Saito-HC}, that rational singularities
are Du Bois.

Recently, a systematic study of natural refinements of these two standard classes of singularities has been taking shape, guided especially by developments of a Hodge theoretic and $\Dmod$-module theoretic flavor.
On one hand, the papers \cite{MOPW} and \cite{Saito_et_al} introduced and studied the notion of \emph{$k$-Du Bois singularities} for hypersurfaces (the terminology appeared in the latter) as a natural extension of the concept of Du Bois singularities. The definition, which makes sense for an arbitrary variety $Z$, is that the natural morphisms 
$$\Omega_Z^i \longrightarrow \underline\Omega_Z^i$$
are isomorphisms for $0 \le i \le k$. The results in \emph{loc.}~\emph{cit.} were extended to local complete intersections in \cite{MP2}.

On the other hand, as defined in \cite{FL1} (cf. also \cite{FL3}) for normal isolated singularities, and communicated to us by 
R. Laza in general, rational singularities also admit a natural refinement: a variety $Z$ has \emph{$k$-rational singularities} if for any 
resolution of singularities $\mu \colon \widetilde{Z} \to Z$ that is an isomorphism over the smooth locus of $Z$ and such that the reduced inverse image $D$ of the singular locus is a simple normal crossing divisor,
the canonical morphisms
$$\Omega_Z^i\to \derR\mu_*\Omega^i_{\widetilde{Z}}(\log\,D)$$
are isomorphisms for all $0\leq i\leq k$. Note that for $k=0$ we recover the classical notions of Du Bois and rational singularities.

The study we undertake here is motivated by two points. First, since we know that rational singularities are Du Bois, it is natural to ask whether this persists for the higher versions.  Second, in the case of hypersurfaces both notions have other possible (numerical) refinements in terms of minimal exponents; the natural question, already approached for isolated singularities in \cite{FL1},  is whether they coincide with the notions defined above. 

In this paper, we answer some of these questions positively for local complete intersections, and others, when the $V$-filtration and minimal exponents are involved, only in the case of hypersurfaces. While writing it, we learned that some of the results we obtain have also been arrived at independently in \cite{FL2} and \cite{Saito-AppendixFL}, following work in the case of isolated singularities in \cite{FL1}, \cite{FL3}; see below. At the moment  it seems quite hard to say much beyond the case of local complete intersections. 

In what follows we always work over $\CC$, and $X$ is an irreducible $n$-dimensional smooth algebraic variety.

\medskip

\noindent
{\bf Results for local complete intersections.} 
The main result we obtain in this context is an injectivity theorem for the cohomologies of the Grothendieck duals of the various graded quotients of the Du Bois complex, whose proof relies on the study of the Hodge filtration on local cohomology in \cite{MP2}.

\begin{intro-theorem}\label{injectivity}
Let $Z$ be an algebraic variety which is locally a complete intersection, and let 
$k$ be a nonnegative integer such that $Z$ has $(k-1)$-Du Bois singularities.  Then the morphism 
$$\derR \mathcal{H}om_{\shO_Z} (\underline{\Omega}_Z^k, \omega_Z) \to \derR \mathcal{H}om_{\shO_Z} (\Omega_Z^k, \omega_Z)$$ 
in the derived category of coherent sheaves on $Z$, obtained by dualizing the canonical morphism 
$\Omega_Z^k \to \underline{\Omega}_Z^k$, is injective at the level of cohomology.
\end{intro-theorem}

When $k = 0$ the hypothesis is vacuous, and the statement is known to hold for an arbitrary variety $Z$ (meaning not necessarily a local complete intersection). This result was shown by Kov\'acs and Schwede \cite[Theorem 3.3]{KoS}, with a somewhat stronger version obtained by different means in \cite[Theorem A]{MP2}, and has proven to be useful for a whole range of applications.

 A quick consequence of Theorem \ref{injectivity} that we derive here is the natural higher analogue, for local complete intersections, of the 
 well-known fact discussed above that rational singularities are Du Bois.
  
\begin{intro-theorem}\label{main2}
Let $Z$ be an algebraic variety which is locally a complete intersection. If $Z$ has $k$-rational singularities, then $Z$ has $k$-Du Bois singularities.
\end{intro-theorem}

This result was also obtained in a different fashion by Friedman and Laza in \cite{FL2}, and previously in the case of 
 isolated singularities in \cite{FL1} (cf. also \cite{FL3}).

An interesting consequence of Theorem \ref{main2} regards duality for the graded pieces of the Du Bois complex. It is not hard to show, see Proposition \ref{duality-DB} below,  that for every $d$-dimensional irreducible variety $Z$ and every $k \ge 0$ there is a natural morphism
$$\psi_k \colon \underline{\Omega}_Z^k \longrightarrow \derR \mathcal{H}om_{\shO_Z} \big( \underline{\Omega}_Z^{d-k}, \omega_Z^\bullet [-d]\big),$$
where $\omega_Z^\bullet$ is the dualizing complex of $Z$. This is however usually not an isomorphism, and in fact we show that this property is precisely what makes
the difference between $k$-Du Bois and $k$-rational singularities:

\begin{intro-corollary}\label{krat-duality}
If $Z$ is an irreducible $d$-dimensional local complete intersection variety, then 
$Z$ has $k$-rational singularities if and only if it has $k$-Du Bois singularities and
the canonical morphism
$$\psi_k \colon \underline{\Omega}_Z^k \longrightarrow \derR \mathcal{H}om_{\shO_Z} ( \underline{\Omega}_Z^{d-k}, \omega_Z)$$
is an isomorphism.
\end{intro-corollary}

By duality, in the setting of Corollary \ref{krat-duality} one can also compute $\underline{\Omega}_Z^{d-k}$ as the derived 
dual of $\Omega_Z^k$; see Corollary \ref{dual-dual}.

On a different note, since the work of Steenbrink, see e.g. \cite{Steenbrink}, it has been known that the graded pieces of the Du Bois complex are closely related to direct images of sheaves of forms with log poles on log resolutions. In this direction, we record the following local vanishing theorem, which is mainly a consequence of our results in \cite{MP2}, with some additions from \cite{GKKP}; this gives a positive answer to a question of R. Laza.
A related result, Theorem \ref{thm_vanishing} below, will be a crucial technical step towards a finer understanding of $k$-rational singularities of hypersurfaces. We again take $\mu \colon \widetilde{Z} \to Z$ to be a resolution that is an isomorphism over the smooth locus of $Z$, and such that the reduced preimage of the singular locus of $Z$ is a simple normal crossing divisor $D$.

\begin{intro-theorem}\label{DB-vanishing}
If $Z$ is a $d$-dimensional local complete intersection variety which has $k$-Du Bois singularities and is normal,\footnote{Note that $Z$ is automatically normal if $k\geq 1$.} then 
$$R^q\mu_*\Omega^p_{\widetilde{Z}}(\log\,D)(-D)=0$$ 
in each of the following cases:
\begin{enumerate}
\item[i)] $p+q\geq d +1$;
\item[ii)] $p\leq k$, $q \ge 1$, and $ p + q \ge d - 2k +1$;
\item[iii)] $p=k+1$ and $q=d-k-1$;
\item[iv)] $q \ge \max \{ d- k - 1, 1\}$.
\end{enumerate}
\end{intro-theorem}

When the singularities of $Z$ are isolated, a different approach to such vanishing is provided in
\cite[Theorem 3.6]{FL1}.

\noindent
{\bf Results for hypersurfaces.}
In the case of hypersurfaces, one can obtain stronger results; the key technical tool allowing for this is the minimal exponent, and 
especially its connection with the $V$-filtration of Kashiwara and Malgrange.

Let $Z$ be a hypersurface in $X$. Its minimal exponent $\widetilde{\alpha}(Z)$ is the negative of the greatest root of the reduced Bernstein-Sato polynomial $b_Z(s) / (s+1)$; see e.g. \cite[\S6]{MP3} for a general discussion of this singularity invariant. The minimal exponent is related to the log canonical threshold by the formula ${\rm lct}(X, Z) = \min\{\widetilde{\alpha}(Z), 1\}$.
M. Saito showed in \cite{Saito-B} that $Z$ has rational singularities if and only if $\widetilde{\alpha}(Z)> 1$.
This can be extended to the following:

\begin{intro-theorem}\label{thm_main}
If $Z$ is a hypersurface in $X$,  and $k$ is a nonnegative integer, then $Z$ has $k$-rational singularities 
if and only if $\widetilde{\alpha}(Z)>k+1$.
\end{intro-theorem}

Another proof of this result was obtained independently by Saito \cite{Saito-AppendixFL}. For isolated singularities, it had been established in \cite[Theorem 3.8]{FL1}, cf. also \cite{FL3}.

\noindent
\emph{Note.}
The notion of $k$-rationality for hypersurfaces first appeared in \cite{KL} precisely as the condition 
$\widetilde{\alpha}(Z)>k+1$. Here we are of course using the more natural definition in terms of log resolutions suggested in \cite{FL1}. The point of the theorem is that these two possible generalizations
indeed coincide. 

Combining the main results of  \cite{MOPW} and \cite{Saito_et_al},  we also know that 
$Z$ is $k$-Du Bois if and only if $\widetilde{\alpha} (Z)  \ge k +1$. Together with Theorem \ref{thm_main}, this provides a numerical strengthening of Theorem \ref{main2}, and has the following immediate consequence (cf. also \cite[Conjecture 1.8]{FL2}):

\begin{intro-corollary}\label{drop}
If $Z$ is a hypersurface in $X$ which has $(k+1)$-Du Bois singularities, for some $k\ge 0$, then $Z$ has $k$-rational singularities.
\end{intro-corollary}

At the moment we do not know how to show this last implication in the case of local complete intersections of higher codimension.

We consider an approach that obtains the ``if" part of Theorem \ref{thm_main} as a quick consequence of the following local vanishing theorem, which is the counterpart to Theorem \ref{DB-vanishing}, and uses the same notation. The question whether a vanishing result roughly of this type holds was 
posed to us by R. Laza. Part ii) is in fact one of the main results of \cite{MP1} (the proof we give here is technically similar, but slightly simpler); the main new result is part i). Again, for isolated singularities see also \cite[Theorem 3.8 and Remark 3.9]{FL1}.

\begin{intro-theorem}\label{thm_vanishing}
If $Z$ is a hypersurface in $X$, $k$ is a nonnegative integer, and $\widetilde{\alpha}(Z)>k+1$, then 
$$R^q\mu_*\Omega_{\widetilde{Z}}^p(\log\,D)=0$$ 
in each of the following cases:
\begin{enumerate}
\item[i)] $p\leq k$ and $q\geq 1$;
\item[ii)] $p=k+1$ and $q=n-k-2$. 
\end{enumerate}
\end{intro-theorem}

This theorem is in turn a consequence of an analogous result we establish for log resolutions of the pair $(X, Z)$, as opposed to $Z$ itself; see Theorem \ref{thm_vanishing_log}. The proofs of Theorems \ref{thm_main} and \ref{thm_vanishing} make use of Saito's theory of mixed Hodge modules \cite{Saito-MHM}, especially duality and the description of the Hodge filtration
on the local cohomology $\cH_Z^1(\shO_X)$ in terms of the $V$-filtration.

\noindent 
{\bf Acknowledgements.}
We especially thank R. Laza for outlining the program we address here, and for asking us many relevant questions.
We also thank him and R.~Friedman for sharing an early version of the paper \cite{FL1}, which discusses 
many of these topics in the case of isolated singularities. We would like to thank the anonymous referee for several comments and suggestions
on a previous version of this article.

\section{Definitions and background}
In what follows we work over the field $\CC$ of complex numbers. By a \emph{variety} we mean a reduced, separated scheme of finite type over $\CC$ (not necessarily irreducible).  For a variety $Z$, we denote by $Z_{\rm sing}$ its singular locus and by $Z_{\rm sm}$ the complement $Z \smallsetminus Z_{\rm sing}$.

\subsection{Log resolutions}
We will deal with two types of resolutions of singularities that we now describe. If $Z$ is an irreducible variety, then by a \emph{log resolution}
of $Z$ we mean a proper birational morphism $\mu\colon\widetilde{Z}\to Z$, with $\widetilde{Z}$ smooth, such that if $W$ is the complement of the domain of $\mu^{-1}$, then the subset $\mu^{-1}(W)_{\rm red}$ (the \emph{exceptional locus} of $\mu$) is a divisor 
$D$ with simple normal crossings. We say that $\mu$ is a \emph{strong log resolution} of $Z$ if, in addition, we have $W=Z_{\rm sing}$ (in general we only have $Z_{\rm sing}\subseteq W$).
Recall that if $X$ is a smooth variety, then a family of subvarieties $(W_i)$ of $X$ has simple normal crossings if locally on $X$ we can find a system of algebraic coordinates $x_1,\ldots,x_n$ (that is, $dx_1,\ldots,dx_n$
trivialize the cotangent bundle) such that every irreducible component of a $W_i$ is defined by a subset of the coordinates.

Suppose now that $Z$ is a subvariety of the smooth irreducible variety $X$. A \emph{log resolution} of $(X,Z)$ is a proper
morphism $\pi\colon Y\to X$, with $Y$ smooth, which is an isomorphism over $X\smallsetminus Z$, and such that $\pi^{-1}(Z)_{\rm red}$ is a divisor with simple normal crossings. If $Z$ is a hypersurface in $X$,
then we say that $\pi$ is a \emph{strong log resolution} of $(X,Z)$ if, in addition, it is an isomorphism over $X\smallsetminus Z_{\rm sing}$.  In this case, if $Z$ is irreducible and $\widetilde{Z}$ is its strict transform on $Y$, then the induced morphism $\widetilde{Z}\to Z$ is a strong log resolution of $Z$. We note that in both contexts strong log resolutions exist by Hironaka's fundamental theorem. Moreover, a (strong) log resolution of $(X,Z)$ can be obtained as a composition of blow-ups of smooth centers.

Let's assume now that  $Z$ is an irreducible hypersurface in $X$, and establish a connection between 
higher direct images of forms with log poles in the two contexts. Suppose that $W$ is a proper closed subset of $Z$ containing 
$Z_{\rm sing}$, and let
$\pi\colon Y\to X$ be a log resolution of $(X,Z)$ that is an isomorphism over $X\smallsetminus W$ and
is a composition of smooth blow-ups. More precisely, the morphism
$\pi$ factors as
$$Y=X_N\overset{\pi_N}\longrightarrow X_{N-1}\longrightarrow\cdots\longrightarrow X_1\overset{\pi_1}\longrightarrow X_0=X,$$
where each $\pi_j$ with $1\leq j\leq N$ is the blow-up of a smooth irreducible subvariety $W_{j-1}\subseteq X_{j-1}$ that lies over $W$.
We denote by $F_j$ the exceptional divisor of $X_j\to X$ and by $Z_j$ the strict transform of $Z$ on $X_j$.
Moreover, for each $1\leq j\leq N$, we assume that $W_{j-1}$ has simple normal crossings with $Z_{j-1}+F_{j-1}$. We note that such a resolution exists by Hironaka's theorem (we can take, for example, $W=Z_{\rm sing}$). We denote
$$\widetilde{Z}=Z_N,\,\,\,\, E=\pi^*(Z)_{\rm red}=\widetilde{Z}+F_N, \,\,\,\, {\rm and} \,\,\,\, D=F_N\vert_{\widetilde{Z}}.$$
If $\mu\colon \widetilde{Z}\to Z$ is the restriction of $\pi$, then $\mu$ is a strong log resolution of $Z$ and $D=\mu^{-1}(Z_{\rm sing})_{\rm red}$.

\begin{proposition}\label{prop_log_res}
With the above notation, if  $r={\rm codim}_X(W)$, then 
for every $i$ with $0\leq i\leq r-2$ and every $q\geq 1$, we have an isomorphism
\begin{equation}\label{eq11_prop_log_res}
R^q\mu_*\Omega_{\widetilde{Z}}^i(\log D)\simeq R^q\pi_*\Omega_Y^{i+1}(\log\,E).
\end{equation}
Moreover, we have an isomorphism
\begin{equation}\label{eq12_prop_log_res}
\mu_*\Omega^i_{\widetilde{Z}}(\log\,D)\simeq {\rm Coker}\big(\Omega_X^{i+1}\to\pi_*\Omega_Y^{i+1}(\log\,E)\big).
\end{equation}
\end{proposition}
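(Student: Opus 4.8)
The plan is to deduce both formulas from the Poincaré residue sequence attached to the simple normal crossings divisor $E=\widetilde{Z}+F_N$ on $Y$. Since $\widetilde{Z}$ is a smooth component of $E$ and $D=F_N\vert_{\widetilde{Z}}=(E-\widetilde{Z})\vert_{\widetilde{Z}}$, for every $p$ there is a short exact sequence
\[
0 \to \Omega_Y^{p}(\log F_N) \to \Omega_Y^{p}(\log E) \xrightarrow{\;\Res\;} \Omega_{\widetilde{Z}}^{p-1}(\log D) \to 0,
\]
the last map being the residue along $\widetilde{Z}$. Taking $p=i+1$ and applying $R\pi_*$, I would pass to the long exact sequence of higher direct images. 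Because $\widetilde{Z}\hookrightarrow Y$ is a closed immersion and $\pi\vert_{\widetilde{Z}}=\mu$, one has $R^q\pi_*\Omega_{\widetilde{Z}}^{i}(\log D)=R^q\mu_*\Omega_{\widetilde{Z}}^{i}(\log D)$. Thus everything reduces to understanding the exceptional term, namely to proving, in the range $i+1\leq r-1$, the two statements
\[
\text{(A)}\quad R^q\pi_*\Omega_Y^{i+1}(\log F_N)=0\ \ (q\geq 1), \qquad \text{(B)}\quad \pi_*\Omega_Y^{i+1}(\log F_N)=\Omega_X^{i+1}.
\]

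Granting these, the two assertions are formal. For $q\geq1$, the terms $R^q\pi_*\Omega_Y^{i+1}(\log F_N)$ and $R^{q+1}\pi_*\Omega_Y^{i+1}(\log F_N)$ flanking $R^q\pi_*\Omega_Y^{i+1}(\log E)$ in the long exact sequence both vanish by (A), giving the isomorphism \eqref{eq11_prop_log_res}. The degree-zero portion of the sequence reads
\[
0\to \pi_*\Omega_Y^{i+1}(\log F_N)\to \pi_*\Omega_Y^{i+1}(\log E)\to \mu_*\Omega^i_{\widetilde{Z}}(\log D)\to R^1\pi_*\Omega_Y^{i+1}(\log F_N),
\]
so by (B) and the vanishing of the last term it identifies $\mu_*\Omega^i_{\widetilde{Z}}(\log D)$ with $\coker\bigl(\Omega_X^{i+1}\to\pi_*\Omega_Y^{i+1}(\log E)\bigr)$, the map being the one induced by (B), which one checks is pullback of forms. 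This yields \eqref{eq12_prop_log_res}.

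I would then prove (B), which in fact holds for every $p$ once $r\geq 2$. A section of $\pi_*\Omega_Y^{p}(\log F_N)$ over $U\subseteq X$ is a $p$-form on $\pi^{-1}(U)$ with at worst log poles along $F_N$; restricting to $\pi^{-1}(U)\smallsetminus F_N\cong U\smallsetminus W$ produces a regular form, and this restriction is injective since $Y$ is irreducible. Hence $\pi_*\Omega_Y^{p}(\log F_N)$ is a subsheaf of $j_*\bigl(\Omega_X^{p}\vert_{X\smallsetminus W}\bigr)$ for $j\colon X\smallsetminus W\hookrightarrow X$; as $\Omega_X^{p}$ is locally free and $\codim_X W=r\geq 2$, this equals $\Omega_X^{p}$. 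The reverse inclusion is clear, proving (B).

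The crux, and the only place the bound $i\leq r-2$ is genuinely used, is the local vanishing (A). Here I would exploit that $\pi$ is a composition of blow-ups of smooth centers lying over $W$, reducing by the Leray spectral sequence to the analysis of a single blow-up and invoking Bott-type vanishing along the exceptional projective bundles, the constraint $p\leq r-1$ being exactly what kills the relevant fiber cohomology. That this bound is sharp is already visible for the blow-up of a point in $\CC^{r}$: there $\Omega_Y^{r}(\log F_N)\cong \pi^*\omega_X\otimes\O_Y(rF_N)$, and $R^{r-1}\pi_*$ of it is nonzero by Serre duality on the exceptional $\PP^{r-1}$. The main technical obstacle is bookkeeping across the tower of blow-ups, since the successive centers need not have codimension $\geq r$ in the intermediate varieties even though $F_N$ maps into $W$; the clean alternative is to invoke directly the local vanishing theorem for forms with log poles along the exceptional locus of a log resolution, as developed in \cite{MP2}, which packages precisely this statement.
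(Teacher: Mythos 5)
Your reduction is the same as the paper's: apply $\derR\pi_*$ to the residue sequence
$0 \to \Omega_Y^{i+1}(\log F_N) \to \Omega_Y^{i+1}(\log E) \to \Omega_{\widetilde{Z}}^{i}(\log D) \to 0$
and reduce both assertions to your statements (A) and (B), which together are precisely the paper's intermediate claim (\ref{eq1_prop_log_res}). The formal derivation of (\ref{eq11_prop_log_res}) and (\ref{eq12_prop_log_res}) from (A) and (B) is fine, and your Hartogs argument for (B) is correct — indeed more elementary than the paper's treatment, which obtains (B) together with (A) from a single induction over the tower of blow-ups.

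The gap is in (A), which is the crux of the proposition and the only place the hypothesis $i\le r-2$ enters — and there your proof stops exactly where the work begins. You correctly flag that the Leray reduction to one blow-up at a time runs into the problem that the intermediate centers $W_{j-1}\subseteq X_{j-1}$ need not have codimension $\geq r$, but you resolve this neither by argument nor by a valid citation: \cite{MP2} contains no theorem packaging the vanishing $R^q\pi_*\Omega_Y^p(\log F_N)=0$ for $q\ge 1$, $p\le r-1$, for a composition of blow-ups over $W$ (this is why the present paper writes the argument out in full; it is only implicit in the proof of Theorem D of \cite{MP1}). The missing idea is a dichotomy for each blow-up $\pi_j$. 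If $W_{j-1}\subseteq F_{j-1}$, then, since $W_{j-1}$ has simple normal crossings with $Z_{j-1}+F_{j-1}$ and $F_j=\pi_j^*(F_{j-1})_{\rm red}$, one has ${\pi_j}_*\Omega^p_{X_j}(\log F_j)=\Omega^p_{X_{j-1}}(\log F_{j-1})$ and $R^q{\pi_j}_*\Omega^p_{X_j}(\log F_j)=0$ for $q\geq 1$ and \emph{all} $p$, with no codimension restriction, by \cite[Theorem~31.1(i)]{MP0}; this log-smooth invariance is what rescues the induction precisely in the case where your Bott-type bound is useless. If instead $W_{j-1}\not\subseteq F_{j-1}$, then $W_{j-1}$, being irreducible and not contained in the exceptional locus, is the strict transform of its image in $X$; that image lies in $W$, so ${\rm codim}_{X_{j-1}}(W_{j-1})\geq r$, and the single-blow-up vanishing you allude to (\cite[Lemma~7.2]{MP1}) applies for $p\le r-1$. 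With this case analysis the induction over the tower closes and yields (A); without it, the key step of your proof rests on a reference that does not contain the statement.
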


\begin{proof}
The argument is similar to the one in the proof of \cite[Theorem~D]{MP1}, but we include the details for the benefit of the reader. 
We first show that for every $i\leq r-1$, we have
\begin{equation}\label{eq1_prop_log_res}
\pi_*\Omega_Y^i(\log\,F_N)=\Omega_X^i\quad\text{and}\quad R^q\pi_*\Omega_Y^i(\log\,F_N)=0\quad\text{for all}\quad q\geq 1.
\end{equation}
Using the Leray spectral sequence, we see that in order to prove (\ref{eq1_prop_log_res}) it is enough to show that for every $1\leq j\leq N$,
we have
\begin{equation}\label{eq2_prop_log_res}
{\pi_j}_*\Omega_{X_j}^i(\log\,F_j)=\Omega_{X_{j-1}}^i(\log\,F_{j-1})\quad\text{and}\quad R^q{\pi_j}_*\Omega_{X_j}^i(\log\,F_j)=0\quad\text{for all}\quad q\geq 1.
\end{equation}
Let us fix $j$. If $W_{j-1}\subseteq F_{j-1}$, then $F_j=\pi_j^*(F_{j-1})$ and the assertion follows from \cite[Theorem~31.1(i)]{MP0}. On the other hand, if $W_{j-1}\not\subseteq F_{j-1}$, then $W_{j-1}$
is the strict transform of its image in $X$. In particular, we have ${\rm codim}_{X_{j-1}}(W_{j-1})\geq r$ and the assertions in (\ref{eq2_prop_log_res}) for $j$
follow from \cite[Lemma~7.2]{MP1}.

We now consider on $Y$ the residue short exact sequence
$$0\longrightarrow \Omega_Y^{i+1}(\log\,F_N)\longrightarrow \Omega_Y^{i+1}(\log\,E)\longrightarrow\Omega_{\widetilde{Z}}^i(\log\,D)\longrightarrow 0.$$
The long exact sequence for higher direct images together with the formulas in (\ref{eq1_prop_log_res}) imply that for $i\leq r-2$ we have an isomorphism
$$R^q\pi_*\Omega_Y^{i+1}(\log E)\simeq R^q\mu_*\Omega_{\widetilde{Z}}^i(\log\,D)\quad\text{for all}\quad q\geq 1$$
and a short exact sequence
$$0\longrightarrow \Omega_X^{i+1}\longrightarrow \pi_*\Omega_Y^{i+1}(\log\,E)\longrightarrow \mu_*\Omega^i_{\widetilde{Z}}(\log\,D)\longrightarrow 0.$$
This completes the proof of the proposition.
\end{proof}

\begin{lemma}\label{indep_res2}
Let $Z$ be an irreducible variety, 
$\mu\colon \widetilde{Z}\to Z$ a log resolution of $Z$ with $W$ the complement of the domain of $\mu^{-1}$, and let $D$ be the simple normal crossing divisor
on $\widetilde{Z}$ such that $\mu^{-1}(W)_{\rm red}=D$. 
For every $p$ and $q$, the sheaves 
$$R^q\mu_*\Omega_{\widetilde{Z}}^p(\log\,D) \,\,\,\,{\rm and} \,\,\,\,R^q\mu_*\Omega_{\widetilde{Z}}^p(\log\,D) (-D)$$ 
only depend on $W$ (but not on $\mu$). In particular, these sheaves only depend on $Z$ if $\mu$ is assumed to be a strong log resolution.
\end{lemma}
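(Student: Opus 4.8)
The plan is to reduce the whole statement to the behaviour of the sheaves $\Omega^p(\log D)$ under a single smooth blow-up, and then to propagate the resulting identities between two arbitrary resolutions by weak factorization. The final ``in particular'' is then immediate: a strong log resolution is by definition one with $W=Z_{\rm sing}$, a subset determined by $Z$ alone, so once the sheaves are shown to depend only on $W$, they depend only on $Z$ in that case. For the main assertion, fix $W$ and let $\mu_1\colon\widetilde Z_1\to Z$ and $\mu_2\colon\widetilde Z_2\to Z$ be two log resolutions with $D_i=\mu_i^{-1}(W)_{\rm red}$ and with $W$ equal to the complement of the domain of $\mu_i^{-1}$. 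Over $U:=Z\smallsetminus W$ both $\mu_i$ are isomorphisms, so the induced birational map $\widetilde Z_1\dashrightarrow\widetilde Z_2$ is an isomorphism over the common open set $\widetilde Z_i\smallsetminus D_i\cong U$.

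The key reduction is an identity for a single blow-up. If $\sigma\colon V'\to V$ is the blow-up of a smooth center $C$ (necessarily of codimension $\geq 2$) lying over $W$ and having simple normal crossings with a reduced SNC divisor $D\subseteq V$ supported over $W$, and if $D':=\sigma^{-1}(D\cup C)_{\rm red}$, then I claim
$$R\sigma_*\,\Omega^p_{V'}(\log D')\simeq\Omega^p_V(\log D)\quad\text{and}\quad R\sigma_*\bigl(\Omega^p_{V'}(\log D')(-D')\bigr)\simeq\Omega^p_V(\log D)(-D).$$
For the first isomorphism I would invoke exactly the two computations already used in the proof of Proposition~\ref{prop_log_res}, namely \cite[Theorem~31.1]{MP0} in the case $C\subseteq D$ and \cite[Lemma~7.2]{MP1} in the case $C\not\subseteq D$. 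For the twisted version I would deduce it from the untwisted one by Grothendieck--Serre duality on the smooth varieties $V',V$ (of dimension $d=\dim Z$), using the perfect pairing
$$\Omega^p_V(\log D)\otimes\Omega^{d-p}_V(\log D)\longrightarrow\Omega^d_V(\log D)=\omega_V(D),$$
which identifies $\Omega^{d-p}_V(\log D)(-D)$ with $\Omega^p_V(\log D)^{\vee}\otimes\omega_V$; the twisted pushforward formula for the index $d-p$ then becomes equivalent, via this identification and the compatibility of $R\sigma_*$ with duality, to the untwisted formula for the index $p$.

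With these blow-up identities in hand, I would connect $\widetilde Z_1$ and $\widetilde Z_2$ by the Weak Factorization Theorem of Abramovich--Karu--Matsuki--W{\l}odarczyk, applied to $\widetilde Z_1\dashrightarrow\widetilde Z_2$ over $Z$, an isomorphism over $U$. This produces a chain $\widetilde Z_1=V_0\dashrightarrow V_1\dashrightarrow\cdots\dashrightarrow V_m=\widetilde Z_2$ of smooth varieties proper over $Z$, each $\mu^{(\ell)}\colon V_\ell\to Z$ a log resolution with the same $W$ and divisor $D_\ell=(\mu^{(\ell)})^{-1}(W)_{\rm red}$, in which every step is a blow-up or blow-down along a smooth center lying over $W$ and having normal crossings with the boundary. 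For each adjacent pair, letting $\sigma$ denote the underlying blow-up in whichever direction it points, the identities above give $R\sigma_*\Omega^p(\log)\simeq\Omega^p(\log)$, and composing with the appropriate $R\mu^{(\ell)}_*$ yields
$$R\mu^{(\ell)}_*\,\Omega^p_{V_\ell}(\log D_\ell)\simeq R\mu^{(\ell+1)}_*\,\Omega^p_{V_{\ell+1}}(\log D_{\ell+1})$$
in the derived category of coherent sheaves on $Z$, and likewise for the $(-D)$-twisted sheaves. Hence $R\mu_{1*}\Omega^p_{\widetilde Z_1}(\log D_1)\simeq R\mu_{2*}\Omega^p_{\widetilde Z_2}(\log D_2)$ in $D^b(Z)$; taking the $q$-th cohomology sheaf gives the statement for $R^q\mu_*\Omega^p(\log D)$, and the twisted argument is identical.

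The main obstacle I anticipate is purely in the reduction to blow-ups, namely ensuring that the weak factorization can be chosen relative to $Z$, an isomorphism over $U$, and with all intermediate $V_\ell\to Z$ genuine log resolutions sharing the same $W$ --- so that at every stage the centers are supported over $W$ and meet the boundary transversally, which is what licenses the blow-up identities. Since the assertion is local on $Z$, I would first pass to a quasi-projective neighbourhood to place myself squarely within the hypotheses of the factorization theorem. A secondary technical point is the twisted blow-up formula: the untwisted statement is available off the shelf from \cite{MP0} and \cite{MP1}, but carefully matching the $(-D)$-twist with the duality isomorphism (tracking the exceptional contributions to $\omega_{V'/V}$ and to $D'$) requires some bookkeeping, though it presents no essential difficulty.
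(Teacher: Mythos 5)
Your overall strategy --- prove the invariance for a single smooth blow-up and then propagate along a weak factorization of the birational map $\widetilde Z_1\dashrightarrow\widetilde Z_2$ --- is viable, but it is genuinely different from, and substantially heavier than, the paper's argument. The paper observes that any two log resolutions with the same $W$ are dominated by a common one, and that the comparison $\derR g_*\Omega^p_{\widetilde Y}(\log F)\simeq\Omega^p_Y(\log E)$ of \cite[Theorem~31.1(i)]{MP0} holds for an \emph{arbitrary} proper morphism $g$ that is an isomorphism over $Y\smallsetminus{\rm Supp}(E)$ with $F=g^*(E)_{\rm red}$ simple normal crossings --- not just for a single blow-up. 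The common-dominating-resolution trick therefore reduces the lemma at once to this known statement, plus the Grothendieck duality argument for the $(-D)$-twist (your duality step, via $\bigl(\Omega^p(\log E)(-E)\bigr)^{\vee}\simeq\Omega^{n-p}(\log E)\otimes\omega^{-1}$, is exactly the paper's). What your route buys is that you only ever need the comparison theorem for one smooth blow-up with center inside the boundary, which is classical; what it costs is the full Weak Factorization Theorem together with all the bookkeeping you anticipate (properness over $Z$, hence localization and compactification; SNC-compatibility of the factorization; centers lying over $W$), every piece of which the paper's reduction avoids.

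There is also one intermediate claim in your proposal that is false as stated. In the case $C\not\subseteq D$, the identity $\derR\sigma_*\Omega^p_{V'}(\log D')\simeq\Omega^p_V(\log D)$ does \emph{not} hold for all $p$: the cited \cite[Lemma~7.2]{MP1} only gives it for $p\leq{\rm codim}(C)-1$, and it genuinely fails beyond that range. For instance, blowing up a point $C\notin{\rm Supp}(D)$ on a surface, with exceptional curve $E$, one finds $R^1\sigma_*\Omega^2_{V'}(\log D')\simeq\CC$ supported at $C$, which is nonzero; this is the same phenomenon as $R^{r-1}\mu_*\Omega^j_{\widetilde Z}(\log D)\simeq\Omega^{j-r}_W\neq 0$ for $j\geq r$ from \cite[Lemma~4.27]{MP2}, quoted in the proof of Proposition~\ref{prop_gen_log_res}. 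Fortunately, this branch of your dichotomy never occurs in your chain: each $D_\ell$ is by definition the \emph{full} reduced preimage of $W$, and every center lies over $W$ because the factorization is an isomorphism over $U=Z\smallsetminus W$; hence every center is automatically contained in ${\rm Supp}(D_\ell)$, and only the case $C\subseteq D$, i.e. \cite[Theorem~31.1(i)]{MP0}, is ever invoked. You should delete the $C\not\subseteq D$ branch (whose role in Proposition~\ref{prop_log_res} belongs to a different setting, where the boundary is only the exceptional divisor rather than the full preimage of $W$) and instead record this containment explicitly, since it is precisely what licenses the blow-up identity at every step; with that correction your argument is sound.
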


\begin{proof}
Since any two log resolutions  with the same $W$ are dominated 
by a common one, in order to prove the assertion it is enough to show that if $E$ is a reduced simple normal crossing divisor on the smooth $n$-dimensional variety $Y$ and $g\colon \widetilde{Y}\to Y$
is a proper morphism that is an isomorphism over $Y\smallsetminus {\rm Supp}(E)$, with $\widetilde{Y}$ smooth and $F=g^*(E)_{\rm red}$ having simple normal crossings, then for every $p\geq 0$,
we have canonical isomorphisms
\begin{equation}\label{eq_rmk_indep_res3}
\derR g_*\Omega_{\widetilde{Y}}^p(\log\,F)\simeq\Omega_Y^p(\log\,E).
\end{equation}
and
\begin{equation}\label{eq_rmk_indep_res2}
\derR g_*\Omega_{\widetilde{Y}}^p(\log\,F)(-F)\simeq\Omega_Y^p(\log\,E)(-E).
\end{equation}
In fact ($\ref{eq_rmk_indep_res3}$) is already known; see for example \cite[Theorem~31.1(i)]{MP0}.
On the other hand, since $\Omega_Y^n(\log\,E)\simeq\omega_Y(E)$ and $\Omega_{\widetilde{Y}}^n(\log\,F)\simeq\omega_{\widetilde{Y}}(F)$, we have
$$\big(\Omega_Y^p(\log\,E)(-E)\big)^{\vee}\simeq\Omega_Y^{n-p}(\log\,E)\otimes\omega_Y^{-1} \,\,\,\,{\rm and}\,\,\,\,
\big(\Omega_{\widetilde{Y}}^p(\log\,F)(-F)\big)^{\vee}\simeq\Omega^{n-p}_{\widetilde{Y}}(\log\,F)\otimes\omega_{\widetilde{Y}}^{-1},$$
and using Grothendieck duality we obtain
$$\derR {\mathcal Hom}_{\shO_Y}\big(\derR g_*\Omega_{\widetilde{Y}}^p(\log\,F)(-F),\omega_Y\big)\simeq 
\derR g_*\derR {\mathcal Hom}_{\shO_{\widetilde{Y}}}\big(\Omega_{\widetilde{Y}}^p(\log\,F)(-F),\omega_{\widetilde{Y}}\big)$$
$$\simeq \derR g_*\Omega_{\widetilde{Y}}^{n-p}(\log\,F)\simeq \Omega_Y^{n-p}(\log\,E)\simeq \derR {\mathcal Hom}_{\shO_Y}\big(\Omega_Y^p(\log\,E)(-E),\omega_Y\big).$$
The isomorphism in (\ref{eq_rmk_indep_res2}) follows from the fact that $\derR {\mathcal Hom}_{\shO_Y}(-,\omega_Y)$ is a duality. 
\end{proof}

\subsection{$k$-rational singularities}
The following definition is due to Friedman and Laza \cite{FL1} in the case of normal isolated singularities, and was communicated to us by Laza in general.  

\begin{definition}\label{def_k_rat}
Let $Z$ be an irreducible variety and $\mu\colon\widetilde{Z}\to Z$ a strong log resolution, with $\mu^{-1}(Z_{\rm sing})_{\rm red}=D$. 
For an integer $k\geq 0$, we say that $Z$ has \emph{$k$-rational singularities} if the canonical morphisms
\begin{equation}\label{eq_defi_k-ration}
\Omega_Z^i\to \derR\mu_*\Omega^i_{\widetilde{Z}}(\log\,D)
\end{equation}
are isomorphisms for all $0\leq i\leq k$. We say that an arbitrary variety $Z$ has $k$-rational singularities if all its connected components are irreducible, with $k$-rational singularities.
\end{definition}

\begin{remark}\label{rmk_indep_res}
By Lemma \ref{indep_res2}, the definition is independent of the choice of strong log resolution.
\end{remark}

\begin{remark}
For $k=0$, we recover the familiar notion of \emph{rational singularities}.
\end{remark}

\begin{remark}\label{rem:reflexive}
Recall that a coherent sheaf $\shF$ on $Z$ is \emph{reflexive} if the canonical morphism $\shF\to \shF^{\vee\vee}$ is an isomorphism. 
If $Z$ is normal, then $\shF$ is reflexive if and only if it is torsion-free and for every open subsets $V'\subseteq V\subseteq X$
with ${\rm codim}_V(V\smallsetminus V')\geq 2$, the restriction map $\Gamma(V,\shF)\to \Gamma(V',\shF)$ is an isomorphism (see \cite[Proposition~1.6]{Hartshorne}). 
This implies that if $U=Z\smallsetminus Z_{\rm sing}$, with $j\colon U\hookrightarrow X$ the inclusion, and $\shF\vert_U$ is locally free, then $\shF$ is reflexive if and only if
the canonical map $\shF\to j_*(\shF\vert_U)$ is an isomorphism. 
\end{remark}

\begin{remark}\label{rmk_level_zero}
With the notation in Definition~\ref{def_k_rat}, the condition that $Z$ has $k$-rational singularities is equivalent to the vanishings
$$R^q\mu_*\Omega_{\widetilde{Z}}^i(\log\,D)=0\quad\text{for}\quad i\leq k, \,\,q\geq 1$$
and the fact that the canonical morphism 
\begin{equation}\label{eq_rmk_level_zero}
\Omega_Z^i\to\mu_*\Omega_{\widetilde{Z}}^i(\log\,D)
\end{equation}
is an isomorphism for all $i\leq k$. Note that for $i=0$, the morphism (\ref{eq_rmk_level_zero}) is an isomorphism 
if and only if $Z$ is normal. On the other hand, for $i\geq 1$, as a consequence of the extension theorem for differential forms in \cite{KS} we have:
\end{remark}

\begin{lemma}\label{lem:reflexive}
If $Z$ has rational singularities, then the morphism (\ref{eq_rmk_level_zero}) is an isomorphism if and only if $\Omega_Z^i$ is a reflexive sheaf.
\end{lemma}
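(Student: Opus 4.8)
The plan is to recognize the morphism (\ref{eq_rmk_level_zero}) as nothing but the canonical map from $\Omega_Z^i$ to its reflexive hull, and then to appeal to the elementary fact that such a map is an isomorphism exactly when its source is already reflexive. All the geometric content then gets concentrated into the statement that the target sheaf $\mu_*\Omega_{\widetilde{Z}}^i(\log D)$ is reflexive and coincides with $(\Omega_Z^i)^{\vee\vee}$, which is where the hypothesis of rational singularities does its work.

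First I would fix the geometry. Since $Z$ has rational singularities it is normal, so the open immersion $j\colon Z_{\rm sm}\hookrightarrow Z$ of the smooth locus has complement $Z_{\rm sing}$ of codimension $\geq 2$. As $\mu$ is a strong log resolution it is an isomorphism over $Z_{\rm sm}$, and $D=\mu^{-1}(Z_{\rm sing})_{\rm red}$ is disjoint from $\mu^{-1}(Z_{\rm sm})=\widetilde{Z}\smallsetminus D$. Hence both $\Omega_Z^i$ and $\mu_*\Omega_{\widetilde{Z}}^i(\log D)$ restrict on $Z_{\rm sm}$ to the locally free sheaf $\Omega_{Z_{\rm sm}}^i$, and (\ref{eq_rmk_level_zero}) restricts there to the identity. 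Because $\mu_*\Omega_{\widetilde{Z}}^i(\log D)$ is the pushforward of a torsion-free sheaf along a birational morphism it is itself torsion-free, so it embeds canonically into $j_*\Omega_{Z_{\rm sm}}^i$; and since $Z$ is normal and $\Omega_Z^i$ is locally free on $Z_{\rm sm}$, this sheaf $j_*\Omega_{Z_{\rm sm}}^i$ is precisely the reflexive hull $(\Omega_Z^i)^{\vee\vee}$.

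The heart of the matter is to upgrade this embedding to an equality, i.e.\ to show $\mu_*\Omega_{\widetilde{Z}}^i(\log D)\cong(\Omega_Z^i)^{\vee\vee}$. For this I would invoke the extension theorem for differential forms on varieties with rational singularities: in the local complete intersection setting of this paper such a $Z$ has log canonical (indeed canonical) singularities, so the extension theorem of \cite{GKKP} gives $\mu_*\Omega_{\widetilde{Z}}^i(\log D)\cong(\Omega_Z^i)^{\vee\vee}$ directly; for an arbitrary $Z$ with rational singularities the holomorphic extension theorem of Kebekus--Schnell yields $\mu_*\Omega_{\widetilde{Z}}^i\cong(\Omega_Z^i)^{\vee\vee}$, and the chain $\mu_*\Omega_{\widetilde{Z}}^i\subseteq\mu_*\Omega_{\widetilde{Z}}^i(\log D)\subseteq j_*\Omega_{Z_{\rm sm}}^i$ then squeezes the log sheaf between two copies of $(\Omega_Z^i)^{\vee\vee}$, forcing equality.

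With this identification, (\ref{eq_rmk_level_zero}) becomes the canonical map $\Omega_Z^i\to(\Omega_Z^i)^{\vee\vee}$, which is an isomorphism if and only if $\Omega_Z^i$ is reflexive; this settles both implications simultaneously. I expect the only real obstacle to be the reflexivity of $\mu_*\Omega_{\widetilde{Z}}^i(\log D)$, since this is the step that genuinely consumes the rational-singularities hypothesis. I would note that one implication is cheap and needs no extension theorem: if $\Omega_Z^i$ is reflexive then $\Omega_Z^i=j_*\Omega_{Z_{\rm sm}}^i$, and composing (\ref{eq_rmk_level_zero}) with the torsion-free embedding $\mu_*\Omega_{\widetilde{Z}}^i(\log D)\hookrightarrow j_*\Omega_{Z_{\rm sm}}^i=\Omega_Z^i$ produces an endomorphism of $\Omega_Z^i$ that is the identity on $Z_{\rm sm}$, hence the identity (a map out of a reflexive sheaf is determined by its restriction to a big open set), which forces (\ref{eq_rmk_level_zero}) to be an isomorphism. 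The difficulty is thus entirely in the converse direction, i.e.\ in showing that the target is reflexive.
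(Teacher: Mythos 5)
Your proposal is correct, and its skeleton matches the paper's: both proofs reduce the lemma to showing that $\mu_*\Omega^i_{\widetilde Z}(\log D)$ is reflexive (indeed equal to the reflexive hull of $\Omega_Z^i$), after which both implications are formal. The difference lies in the key input. The paper does not invoke the headline extension theorem for rational singularities; it uses only the criterion of \cite[Theorem~1.5]{KS}, whose hypothesis it verifies in top degree: rational singularities make $Z$ normal and Cohen--Macaulay with $\mu_*\omega_{\widetilde Z}\simeq\omega_Z$ reflexive (\cite[Theorem~5.10]{KM}), and a torsion-free squeeze---the same one you use, but applied only to the inclusion $\mu_*\omega_{\widetilde Z}\hookrightarrow\mu_*\omega_{\widetilde Z}(D)$---shows that $\mu_*\omega_{\widetilde Z}(D)$ is reflexive; then \cite[Theorem~1.5]{KS} propagates reflexivity to $\mu_*\Omega^i_{\widetilde Z}(\log D)$ for all $i$ simultaneously. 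You instead quote the stronger statement (extension of forms across any resolution of a variety with rational singularities, due to Kebekus--Schnell, or \cite{GKKP} in the klt case) in each degree separately, obtaining $\mu_*\Omega^i_{\widetilde Z}\simeq(\Omega_Z^i)^{\vee\vee}$, and then run the squeeze degree by degree to absorb the log poles. Logically the paper's input is weaker: in \cite{KS} the rational-singularities extension theorem is itself deduced from their Theorem~1.5 by essentially the top-degree computation the paper performs, so the paper's route is the more economical one relative to the literature. What your route buys is transparency: the explicit identification of the target with $(\Omega_Z^i)^{\vee\vee}$ makes both directions of the equivalence immediate, and, as you observe, the implication from reflexivity of $\Omega_Z^i$ to (\ref{eq_rmk_level_zero}) being an isomorphism needs no extension theorem at all. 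One caveat: the lemma is stated for an arbitrary irreducible variety with rational singularities, not only for local complete intersections, so your \cite{GKKP} alternative does not cover it in full generality; your Kebekus--Schnell fallback is the argument that proves the lemma as stated, which you correctly flag.
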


\begin{proof}
The key point is to show that since $Z$ has rational singularities, $\mu_*\Omega_{\widetilde{Z}}^i(\log\,D)$ is reflexive for every $i$. 
Note first that $\mu_*\omega_{\widetilde{Z}}(D)$ is a reflexive sheaf.
Indeed, the fact that $Z$ has rational singularities implies that $Z$ is normal and Cohen-Macaulay and $\mu_*\omega_{\widetilde{Z}}\simeq\omega_Z$, which is reflexive; see \cite[Theorem~5.10]{KM}. Since the inclusion $\mu_*\omega_{\widetilde{Z}}\hookrightarrow \mu_*\omega_{\widetilde{Z}}(D)$
is an isomorphism over the smooth locus of $Z$ and $\mu_*\omega_{\widetilde{Z}}(D)$ is torsion-free, it is straightforward to deduce that 
$\mu_*\omega_{\widetilde{Z}}(D)$ is reflexive, and in fact isomorphic to $\mu_*\omega_{\widetilde{Z}}$. 
We can thus apply 
\cite[Theorem~1.5]{KS} to deduce that $\mu_*\Omega_{\widetilde{Z}}^i(\log\,D)$ is reflexive for every $i$. 
Since the morphism (\ref{eq_rmk_level_zero}) is an isomorphism over the smooth locus of $Z$, it follows that it is an isomorphism if and only if $\Omega_Z^i$ is reflexive
(see the last assertion in Remark~\ref{rem:reflexive}).
\end{proof}

It is a natural question whether the morphisms (\ref{eq_defi_k-ration}) are isomorphisms when $X$ has $k$-rational singularities,
but $\mu$ is an arbitrary log resolution of $Z$. We now address this.

\begin{proposition}\label{prop_gen_log_res}
Let $Z$ be an irreducible variety, 
$\mu\colon \widetilde{Z}\to Z$ a log resolution of $Z$ with $W$ the complement of the domain of $\mu^{-1}$, and let $D$ be the simple normal crossing divisor
on $\widetilde{Z}$ such that $\mu^{-1}(W)_{\rm red}=D$. We assume that $W\cap Z_{\rm sm}\neq\emptyset$ and let 
$r={\rm codim}_{Z_{\rm sm}}(W\cap Z_{\rm sm})$.
\begin{enumerate}
\item[i)] If $R^q\mu_*\Omega^i_{\widetilde{Z}}({\rm log}\,D)=0$ for $q\geq 1$ and $i\leq k$, then $r>k$.
\item[ii)] If $Z$ has $k$-rational singularities and $r>k$, then the canonical morphism
$\Omega_Z^i\to \derR\mu_*\Omega^i_{\widetilde{Z}}(\log\,D)$ is an isomorphism for every $i\leq k$. 
\end{enumerate}
\end{proposition}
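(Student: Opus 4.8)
The plan is to handle the two parts separately, in both cases using \lemmaref{indep_res2} to reduce to a computation over the smooth locus: there the sheaves $R^q\mu_*\Omega^i_{\widetilde{Z}}(\log D)$ depend only on $W$ and may be computed with any convenient resolution of the smooth variety $Z_{\rm sm}$ having modification locus $W\cap Z_{\rm sm}$.

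For part i) I argue by contraposition: assuming $r\le k$, I exhibit a nonvanishing $R^q\mu_*\Omega^i_{\widetilde{Z}}(\log D)\neq 0$ with $i\le k$ and $q\ge 1$. First note $r\ge 2$, since over $Z_{\rm sm}$ the map $\mu$ is a proper birational morphism of smooth varieties, whose non-isomorphism locus has image of codimension at least $2$. Localizing at the generic point of a codimension-$r$ component of $W\cap Z_{\rm sm}$ and choosing the resolution to begin with the blow-up of the smooth generic part of that component, I reduce to the model $\pi\colon\widetilde{U}\to U=\AAA^r$, the blow-up of the origin, with exceptional divisor $E\cong\PP^{r-1}$; a smooth factor $\AAA^{d-r}$ tags along, and by the K\"unneth decomposition only reindexes the form degree. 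I claim $R^{r-1}\pi_*\Omega^r_{\widetilde{U}}(\log E)\neq 0$, so that taking $i=r\le k$ and $q=r-1\ge 1$ contradicts the assumed vanishing and forces $r>k$.

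The claimed nonvanishing comes from the residue sequence
$$0\longrightarrow\Omega^r_{\widetilde{U}}\longrightarrow\Omega^r_{\widetilde{U}}(\log E)\longrightarrow\Omega^{r-1}_E\longrightarrow 0:$$
since $\Omega^r_{\widetilde{U}}=\omega_{\widetilde{U}}$, Grauert--Riemenschneider gives $R^q\pi_*\omega_{\widetilde{U}}=0$ for $q\ge 1$, and the long exact sequence then yields $R^{r-1}\pi_*\Omega^r_{\widetilde{U}}(\log E)\cong R^{r-1}\pi_*\Omega^{r-1}_E\cong H^{r-1}\bigl(\PP^{r-1},\omega_{\PP^{r-1}}\bigr)\cong\CC$, which is nonzero. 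For part ii) the strategy is to compare the given $\mu$ with a strong log resolution $\mu_0\colon\widetilde{Z}_0\to Z$, whose modification locus is $Z_{\rm sing}$ and along which $k$-rationality provides $\Omega^i_Z\xrightarrow{\ \sim\ }\derR\mu_{0*}\Omega^i_{\widetilde{Z}_0}(\log D_0)$ for $i\le k$. I choose a common log resolution $\lambda\colon\widetilde{Z}'\to Z$ dominating both, with modification locus exactly $W$, factoring as $\lambda=\mu_0\circ\rho$ where $\rho\colon\widetilde{Z}'\to\widetilde{Z}_0$ is a composition of blow-ups along smooth centers lying over $W\cap Z_{\rm sm}$. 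Since $\lambda$ and $\mu$ share the modification locus $W$, \lemmaref{indep_res2} gives $\derR\mu_*\Omega^i_{\widetilde{Z}}(\log D)\cong\derR\lambda_*\Omega^i_{\widetilde{Z}'}(\log\widehat{D})$ for all $i$.

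It remains to compute $\derR\lambda_*=\derR\mu_{0*}\circ\derR\rho_*$. The centers of $\rho$ lie over $W\cap Z_{\rm sm}$, which sits where $\mu_0$ is an isomorphism, away from the boundary $D_0$; those not contained in the successive exceptional divisors lie in the strict transform of $W\cap Z_{\rm sm}$ and hence have codimension $\ge r$. Running the Leray spectral sequence exactly as in the proof of \propositionref{prop_log_res}, and using \cite[Theorem~31.1(i)]{MP0} for centers inside the boundary (valid in all degrees) together with \cite[Lemma~7.2]{MP1} for centers of codimension $\ge r$ (valid for $i\le r-1$), I obtain $\derR\rho_*\Omega^i_{\widetilde{Z}'}(\log\widehat{D})\cong\Omega^i_{\widetilde{Z}_0}(\log D_0)$ for $i\le r-1$. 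As $r>k$, this holds for $i\le k$, so $\derR\lambda_*\Omega^i_{\widetilde{Z}'}(\log\widehat{D})\cong\derR\mu_{0*}\Omega^i_{\widetilde{Z}_0}(\log D_0)\cong\Omega^i_Z$ for $i\le k$; composing the isomorphisms, all of which are the canonical comparison maps, gives the result. The main obstacle is precisely this last step: one must ensure that the blow-ups needed to pass from $\mu_0$ to a resolution with locus $W$ use only centers that are either contained in the boundary or of codimension $\ge r$, so that the degree bound $i\le r-1$—which is exactly where the hypothesis $r>k$ is consumed—is never violated.
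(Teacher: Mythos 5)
Your proposal is correct and takes essentially the same route as the paper: part i) localizes at a component of $W$ meeting $Z_{\rm sm}$ and, via \lemmaref{indep_res2}, reduces to the blow-up of a smooth center where one exhibits $R^{r-1}\mu_*\Omega^r_{\widetilde{Z}}(\log D)\neq 0$; part ii) replaces $\mu$ (again via \lemmaref{indep_res2}) by a resolution factoring through a strong log resolution followed by a tower of smooth blow-ups with centers over the extra part of $W$, and runs Leray with the dichotomy "center inside the boundary (all degrees) versus center of codimension $\geq r$ (degrees $\leq r-1$)," which is exactly where $r>k$ is used. The only divergences are cosmetic: in i) you verify the nonvanishing by hand (residue sequence, Grauert--Riemenschneider, Serre duality on $\PP^{r-1}$) where the paper simply cites \cite[Lemma~4.27]{MP2} for the formula $R^{r-1}\mu_*\Omega_{\widetilde{Z}}^j(\log D)\simeq\Omega_W^{j-r}$, and in ii) you cite \cite[Lemma~7.2]{MP1} for the large-codimension centers where the paper cites \cite[Lemma~4.27]{MP2}; both are adequate for the statement needed.
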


\begin{proof}
We first note that since $\mu^{-1}$ is defined in codimension $1$ on $Z_{\rm sm}$, we have $r\geq 2$.
In order to prove the assertion in i), given any irreducible component $W_0$ of $W$ that intersects $Z_{\rm sm}$, 
we may replace $Z$ by a suitable smooth open subset $U$ such that $U\cap W_0$ is smooth and irreducible.
Therefore we may and will assume that both $Z$ and $W$ are smooth and irreducible.
By Lemma~\ref{indep_res2},
we may assume that $\mu$ is the blow-up of $Z$ along $W$. In this case, we have
$$R^{r-1}\mu_*\Omega^j_{\widetilde{Z}}(\log\,D)\simeq \Omega_W^{j-r}\quad\text{for all}\quad j$$
(see for example \cite[Lemma~4.27]{MP2}). Therefore this is nonzero if $r\leq j\leq \dim(Z)$, hence the assumption in i) implies $r>k$.

In order to prove the assertion in ii), we use again the fact that by Lemma~\ref{indep_res2}, we may 
choose a convenient $\mu$. We thus may and will
assume that $\mu=\varphi\circ\psi$,
as follows. The morphism $\varphi\colon \widetilde{Z_0}\to Z$ is a strong log resolution of $Z$, with $\varphi^{-1}(Z_{\rm sing})_{\rm red}=F$; we denote by
$W_0\subseteq \widetilde{Z}_0$ the union of the strict transforms of the irreducible components of $W$ that intersect $Z_{\rm sm}$. 
The morphism $\psi\colon \widetilde{Z}\to \widetilde{Z}_0$ is a composition of smooth blow-ups 
$$\widetilde{Z}=\widetilde{Z}_N\overset{\psi_N}\longrightarrow\cdots\overset{\psi_2}\longrightarrow\widetilde{Z}_1\overset{\psi_1}\longrightarrow\widetilde{Z_0},$$
where each $\psi_j$ is the blow-up of a smooth, irreducible subvariety $V_{j-1}\subseteq \widetilde{Z}_{j-1}$ that is mapped into $W_0$. Moreover, if 
$E_j\subseteq \widetilde{Z}_j$ is the exceptional divisor of $\widetilde{Z}_j\to \widetilde{Z}_0$ and $F_j$ is the strict transform of $F$ on $\widetilde{Z}_j$,
then we assume that $V_{j-1}$ has simple normal crossings with $F_{j-1}+E_{j-1}$ for $1\leq j\leq N$. The existence of such $\psi$ is again a consequence
of Hironaka's theorem. With this notation, note that $D=E_N+F_N$. 

Since $Z$ has $k$-rational singularities, we know that the canonical morphism
$$\Omega_Z^i\to\derR \varphi_*\Omega^i_{\widetilde{Z}_0}(\log\,F)$$
is an isomorphism for all $i\leq k$. We deduce using the Leray spectral sequence that 
in order to prove the assertion in ii), it is enough to show that for every $i\leq k$ and every $j$, with $1\leq j\leq N$, the canonical morphism
\begin{equation}\label{eq_prop_gen_log_res}
\Omega^i_{\widetilde{Z}_{j-1}}\big(\log (F_{j-1}+E_{j-1})\big)\to\derR {\psi_j}_*\Omega^i_{\widetilde{Z}_j}\big(\log (F_j+E_j)\big)
\end{equation}
is an isomorphism.
If $V_{j-1}\subseteq {\rm Supp}(F_{j-1}+E_{j-1})$, then (\ref{eq_prop_gen_log_res}) is an isomorphism for all $i$,
see  \cite[Theorem~31.1(i)]{MP0}. On the other hand,
if $V_{j-1}\not\subseteq {\rm Supp}(F_{j-1}+E_{j-1})$, then (\ref{eq_prop_gen_log_res}) is an isomorphism if $i\leq {\rm codim}_{\widetilde{Z}_{j-1}}(V_{j-1})-1$,
see \cite[Lemma~4.27]{MP2}.
Note that since $V_{j-1}$ is not contained in the exceptional locus of $\widetilde{Z}_{j-1}\to Z$, it follows that it is the strict transform of its image in $Z$,
which is contained in an irreducible component of $W$ meeting $Z_{\rm sm}$. We thus have 
$${\rm codim}_{\widetilde{Z}_{j-1}}(V_{j-1})-1\geq r-1\geq k\geq i,$$
hence (\ref{eq_prop_gen_log_res}) is an isomorphism in this case as well. This completes the proof of the proposition.
\end{proof}

\subsection{Brief review of  filtered $\Dmod$-modules}
We will make use of the theory of mixed Hodge modules, for which we refer to \cite{Saito-MHM}. Let $X$ be a smooth, irreducible, $n$-dimensional complex algebraic variety.
Recall that every (mixed) Hodge module on $X$ has an underlying 
filtered $\Dmod_X$-module $(\Mmod,F)$; the  filtration $F$ is called the \emph{Hodge filtration}. For an integer $q$, the \emph{Tate twist} $\Mmod(q)$ has the same underlying
$\Dmod_X$-module, but the filtration is given by $F_p\Mmod(q)=F_{p-q}\Mmod$ for all $p\in\ZZ$. 

We will use both left and right Hodge modules. Recall that there is an equivalence between the categories of such objects, such that 
if $(\Mmod^r,F)$ is the right filtered $\Dmod_X$-module that corresponds to $(\Mmod,F)$, then there is an isomorphism of $\shO_X$-modules
 $\Mmod^r\simeq\omega_X\otimes_{\shO_X}\Mmod$
that induces for every $p$ an isomorphism 
$$F_{p-n}\Mmod^r\simeq\omega_X\otimes_{\shO_X} F_p\Mmod.$$ 
For example, the left version of the trivial (pure) Hodge module
${\mathbf Q}_X^H[n]$ is $(\shO_X,F)$, where ${\rm gr}^F_i \shO_X =0$ for $i\neq 0$, and the corresponding right filtered $\Dmod_X$-module  is
$(\omega_X,F)$, where ${\rm gr}^F_i \omega_X =0$ for $i\neq -n$.

Recall that for every filtered left  Hodge module $(\Mmod, F)$ on $X$ and every $i\in\ZZ$, the complex ${\rm Gr}^F_{i}{\rm DR}_X(\Mmod)$ is given by 
$$0\to {\rm Gr}^F_i \Mmod \to \Omega_X^{1}\otimes_{\shO_X}{\rm Gr}^F_{i+1} \Mmod \to \cdots \to \omega_X
\otimes_{\shO_X}{\rm Gr}^F_{i+n} \Mmod \to 0,$$
placed in cohomological degrees $-n,\ldots,0$. The same complex can be written in terms of the corresponding right Hodge module $(\Mmod^r,F)$ as

$$0\to \wedge^n T_X\otimes_{\shO_X}{\rm Gr}^F_{i-n} \Mmod^r \to 
\wedge^{n-1} T_X\otimes_{\shO_X}{\rm Gr}^F_{i-n+1} \Mmod^r \to\cdots\to {\rm Gr}^F_i \Mmod^r \to 0.$$

\subsection{The Du Bois complex and $k$-Du Bois singularities}
Recall that the \emph{Du Bois complex} of a complex variety $W$ is an element
$(\underline{\Omega}_W^{\bullet},F)$ in a suitable filtered derived category. We will only be interested in its shifted graded pieces
$$\underline{\Omega}_W^p:={\rm Gr}^p_F\underline{\Omega}_W^{\bullet}[p] \,\,\,\,\,\,{\rm for}\,\,\,\,p \ge 0,$$
which are objects in the bounded derived category of coherent sheaves $D^b_{\rm coh}(W)$.
For an introduction to the Du Bois complex and for further references, we refer to \cite[Chapter V.3]{GNPP} or \cite[Chapter~7.3]{PS}.
We note that $\shH^i(\underline{\Omega}_W^p)=0$ for all $i<0$ and $\shH^0(\underline{\Omega}_W^p)$ is torsion-free; this follows, for example, from the description in \cite[Theorem~7.12]{Huber}.

For every $W$ as above and every $p$, there is a canonical morphism $\sigma_p\colon \Omega_W^p\to \underline{\Omega}_W^p$ that is an isomorphism over the smooth
locus of $W$. Recall that by definition, $W$ has Du Bois singularities if $\sigma_0$ is an isomorphism. More generally, following \cite{Saito_et_al}, we will say that $W$
has \emph{$k$-Du Bois singularities} if $\sigma_p$ is an isomorphism for all $p$, with $0\leq p\leq k$. 

\begin{remark}\label{rmk_factorization}
Suppose that $W$ is an irreducible variety and $\mu\colon\widetilde{W}\to W$ is any resolution of singularities.
By functoriality of the Du Bois complex, for every $p$ we have a canonical map
$$\underline{\Omega}_W^p\to \derR \mu_*\underline{\Omega}_{\widetilde{W}}^p=\derR \mu_*\Omega_{\widetilde{W}}^p$$
whose composition with $\sigma_p$ is the canonical morphism $\Omega_W^p\to\derR \mu_*\Omega_{\widetilde{W}}^p$.
Indeed, this follows using the next remark,  since $\mu_*\Omega_{\widetilde{W}}^p$ is torsion-free and
the assertion certainly holds on the locus in $W$ over which $\mu$ is an isomorphism.
\end{remark}

The following general remark is used repeatedly throughout the paper.

\begin{remark}\label{identify_morphisms}
Let $W$ be a variety and let $u\colon{\mathcal F}\to {\mathcal P}$ be a morphism in $D^b_{\rm coh}(W)$,
where ${\mathcal F}$ is a coherent sheaf and $\shH^i({\mathcal P})=0$ for all $i<0$. If $\shH^0({\mathcal P})$ is torsion-free
and $u\vert_V=0$ for some open dense subset $V$ of $W$, then $u=0$. Indeed, giving such a morphism $u$ is equivalent to giving the corresponding morphism of coherent sheaves ${\mathcal F}\to \shH^0({\mathcal P})$, and the assertion is clear. 
\end{remark}

\noindent
{\bf Local complete intersections.}
We next restrict to the case of a local complete intersection $Z$ of pure codimension $r$ in $X$, and review the connection established in \cite{MP2} between the Du Bois complex of $Z$ and the Hodge filtration on the local cohomology sheaf $\shH^r_Z(\shO_X)$. This sheaf has a $\Dmod_X$-module structure which underlies a (left) mixed Hodge module. In particular, it carries a Hodge filtration $F$, and for every integer $k$ we have an isomorphism
\begin{equation}\label{isom-DB}
\underline{\Omega}_Z^k\simeq \derR \shH om_{\shO_X}\big({\rm Gr}^F_{k-n}{\rm DR}_X\shH_Z^r(\shO_X),\omega_X\big)[k+r]
\end{equation}
provided by \cite[Proposition 5.5]{MP2}.

The sheaf $\shH^r_Z(\shO_X)$  also carries a more elementary filtration, also compatible with the order filtration on $\Dmod_X$, namely the \emph{Ext filtration}, given for $p\geq 0$ by 
$$E_p\shH^r_Z(\shO_X)=\big\{u\in \shH^r_Z(\shO_X)\mid \I_Z^{p+1}u=0\big\}=
{\rm Im} ~\big[ \shE xt^r_{\shO_X} \big(\shO_X/ \I_Z^{p+1}, \shO_X\big) \to 
\cH_Z^r(\shO_X) \big],$$
where $\I_Z$ is the ideal defining $Z$ in $X$. It is known that for each $p$ we have 
$F_p\shH^r_Z(\shO_X) \subseteq E_p\shH^r_Z(\shO_X)$; see \cite[\S3.1 and \S3.3]{MP2}.

The  \emph{singularity level} of the Hodge filtration on $\shH^r_Z \shO_X$ is defined as 
$$p (Z) := {\rm sup}\{k ~| ~ F_k \shH^r_Z \shO_X = E_k \shH^r_Z \shO_X \},$$
with the convention that $p (Z) = -1$ if there are no such $k$. For a general study of this invariant, see 
\cite[\S3.3]{MP2}. We only mention a few points: first, we have that $p(Z)=\infty$ if and only if $Z$ is smooth. 
Moreover, when $Z$ is singular, we have
\begin{equation}\label{eqn:ineq}
{\rm codim}_Z (Z_{\rm sing}) \ge 2 p(Z) + 1.
\end{equation}
In general, the singularity level $p(Z)$ only depends on $Z$ (not on the ambient smooth variety $X$). 
If $Z$ is a hypersurface in $X$, then 
 $p(Z)=\lfloor \widetilde{\alpha}(Z)\rfloor-1$, where $\widetilde{\alpha}(Z)$ is the minimal exponent of $Z$, recalled in the  
 Introduction.

The main result proved in \emph{loc}.~\emph{cit}. relates $p(Z)$ to the complexity of the Du Bois complex of $Z$, extending the 
hypersurface case treated in  \cite{MOPW} and \cite{Saito_et_al}.

\begin{theorem}[{\cite[Theorem F]{MP2}}]\label{thm-DB-main}
For every nonnegative integer $p$, we have $p(Z)\geq k$ if and only if $Z$ has $k$-Du Bois singularities.
\end{theorem}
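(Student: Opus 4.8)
The plan is to convert both conditions into statements about the two filtrations $F_\bullet$ and $E_\bullet$ on $\mathcal{M}:=\shH^r_Z(\shO_X)$, and then to compare them through Grothendieck duality. The starting point is the identification \eqref{isom-DB}, which realizes $\underline{\Omega}_Z^k$ as the $(k+r)$-shifted derived dual of the graded de Rham complex $\Gr^F_{k-n}\DR_X(\mathcal{M})$. Since the Ext filtration $E_\bullet$ is likewise compatible with the order filtration on $\Dmod_X$, one can form the parallel complex $\Gr^E_{k-n}\DR_X(\mathcal{M})$, and the first task is to establish the analogous formula
$$\Omega_Z^k \simeq \derR\shHom_{\shO_X}\big(\Gr^E_{k-n}\DR_X(\mathcal{M}),\omega_X\big)[k+r].$$
Granting this, the inclusions $F_j\mathcal{M}\subseteq E_j\mathcal{M}$ induce natural maps $\Gr^F_j\mathcal{M}\to\Gr^E_j\mathcal{M}$, hence a map of graded de Rham complexes, and the second task is to check that, under the two dualities, the canonical morphism $\sigma_k\colon\Omega_Z^k\to\underline{\Omega}_Z^k$ is exactly the dual of this map. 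Both morphisms are isomorphisms over $Z_{\rm sm}$ and have source a coherent sheaf with torsion-free $\shH^0$, so by Remark \ref{identify_morphisms} it suffices that they agree generically, which holds by construction.

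The displayed formula is the technical heart, and it is where the local complete intersection hypothesis enters. Working locally, where $Z$ is cut out by a regular sequence, one resolves $\shO_Z$ by a Koszul complex; this computes $\mathcal{M}$ together with its Ext filtration and exhibits $\Gr^E_\bullet\mathcal{M}$, equipped with the induced de Rham differential, as the graded module on which the symmetric powers of the normal sheaf of $Z$ act. A direct computation then identifies the $(k+r)$-shifted derived dual of $\Gr^E_{k-n}\DR_X(\mathcal{M})$ with the Kähler power $\Omega_Z^k$. The conceptual point is that $E_\bullet$ is the naive filtration seeing only the scheme structure of $Z$, so it reproduces ordinary differential forms, whereas the finer Hodge filtration $F_\bullet$ produces the Du Bois complex.

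The equivalence then follows by a filtered bookkeeping argument, since $F_j\mathcal{M}=E_j\mathcal{M}=0$ for $j<0$ and the complex computing $\underline{\Omega}_Z^p$ (resp.\ $\Omega_Z^p$) involves only the pieces $\Gr^F_j\mathcal{M}$ (resp.\ $\Gr^E_j\mathcal{M}$) in degrees $j\le p$, its top term being $\omega_X\otimes\Gr^F_p\mathcal{M}$ (resp.\ $\omega_X\otimes\Gr^E_p\mathcal{M}$). If $p(Z)\ge k$, i.e.\ $F_j=E_j$ for all $j\le k$, then for every $p\le k$ the two complexes coincide term by term, so $\sigma_p$ is the identity and $Z$ is $k$-Du Bois. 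Conversely, assume $\sigma_p$ is an isomorphism for $0\le p\le k$ and argue by induction on $p$: the case $p=0$ gives $F_0=E_0$ directly, and assuming $F_j=E_j$ for $j<p$, the two complexes differ only in their top term, so the duality turns ``$\sigma_p$ is an isomorphism'' into ``$\Gr^F_p\mathcal{M}\to\Gr^E_p\mathcal{M}$ is an isomorphism''; this map is injective under the inductive hypothesis, hence it is an isomorphism precisely when $F_p\mathcal{M}=E_p\mathcal{M}$. Iterating yields $p(Z)\ge k$. The main obstacle is the Ext-filtration description of $\Omega_Z^k$ together with its exact compatibility with $\sigma_k$; once the two filtrations are placed on equal footing the equivalence is essentially formal.
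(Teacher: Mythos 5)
Your overall strategy --- dualizing $\Omega_Z^k$ and $\underline{\Omega}_Z^k$ into the Ext-graded and Hodge-graded de Rham complexes of $\cH^r_Z(\shO_X)$ and then doing filtered bookkeeping --- is exactly the mechanism behind this result (the paper quotes it from \cite{MP2} and runs the same machinery in its proof of Theorem~\ref{injectivity}). But there is a genuine gap at what you yourself call the technical heart. The identification
$$\Omega_Z^k \simeq \derR\shHom_{\shO_X}\big(\Gr^E_{k-n}\DR_X(\cH^r_Z(\shO_X)),\omega_X\big)[k+r]$$
is false as an unconditional statement, so no local Koszul computation can establish it. What the derived dual of the Ext-graded de Rham complex actually is, up to the stated shift, is the truncated generalized Eagon--Northcott complex $C_k^\bullet$ with terms $\Omega_X^{k-j}\vert_Z\otimes \operatorname{Sym}^j N_{Z/X}^\vee$; since these terms are locally free $\shO_Z$-modules, the termwise dual computes the derived dual. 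One always has $\cH^0(C_k^\bullet)\simeq\Omega_Z^k$, but $C_k^\bullet$ can have nonzero cohomology in negative degrees, and it is quasi-isomorphic to $\Omega_Z^k$ only under the codimension hypothesis ${\rm codim}_Z(Z_{\rm sing})\geq k$ (this is precisely the statement recalled in the paper just before the proof of Theorem~\ref{injectivity}). For $k$ larger than the codimension of the singular locus the identification you assert simply fails, so your ``first task'' cannot be completed in the generality you claim.

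The ingredient that rescues the argument is the inequality (\ref{eqn:ineq}), ${\rm codim}_Z(Z_{\rm sing})\geq 2p(Z)+1$, a nontrivial theorem from \cite{MP2} that your proposal never invokes. With it, your induction does close: in the forward direction $p(Z)\geq k$ gives ${\rm codim}_Z(Z_{\rm sing})\geq 2k+1\geq p$ for every $p\leq k$, and in the converse direction the inductive hypothesis $F_j=E_j$ for $j<p$ means $p(Z)\geq p-1$, whence ${\rm codim}_Z(Z_{\rm sing})\geq 2p-1\geq p$; in both cases the identification of the dual complex with $\Omega_Z^p$ becomes legitimate at each step, and the rest of your argument (termwise comparison of the two graded complexes, injectivity of $\Gr^F_p\to\Gr^E_p$ given equality below level $p$, and Remark~\ref{identify_morphisms} to pin down that the dual of the comparison map is $\sigma_p$ --- note the torsion-freeness hypothesis there concerns $\cH^0$ of the \emph{target} $\underline{\Omega}_Z^p$, not the source) is sound. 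As written, however, the codimension control is entirely absent, and without it the central identification on the Ext side, and hence both directions of the equivalence, do not go through.
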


\section{Results for local complete intersections}
We recall that $X$ denotes a smooth irreducible $n$-dimensional variety. All throughout this chapter, 
$Z$ will be a  local complete intersection closed subvariety of $X$, of pure codimension $r$.

\subsection{Injectivity and hierarchy of singularities}
In this section we prove the main statement for local complete intersections, the injectivity Theorem \ref{injectivity}, and 
deduce Theorem \ref{main2}, stating that $k$-rational local complete intersection singularities are $k$-Du Bois.

In preparation for the proof of Theorem \ref{injectivity}, we consider for each $0 \le k \le \dim Z$ the complex $C_k^\bullet$ defined by
$$0\to {\rm Sym}^k N_{Z/X}^{\vee}\to \Omega_X^{1}\otimes_{\shO_X}{\rm Sym}^{k-1} N_{Z/X}^{\vee}\to\cdots\to 
\Omega_X^{k-1}\otimes_{\shO_X} N_{Z/X}^{\vee}\to \Omega_X^k\otimes_{\shO_X}\shO_Z\to 0$$
and placed in cohomological degrees $-k,\ldots,0$, obtained by truncating the generalized Eagon-Northcott complex $D_{n-r-k}$ (see \cite[Chapter~2.C]{Bruns}) associated to the canonical morphism 
$$g\colon T_X\vert_Z\to N_{Z/X},$$ 
keeping the first $k+1$ terms, and then suitably translating. We have a canonical isomorphism $\cH^0(C_k^{\bullet})\simeq \Omega_Z^k$ 
induced by the exact sequence
$$N_{Z/X}^{\vee}\to \Omega^1_X\vert_Z\to \Omega^1_Z\to 0.$$
It is shown in \cite[\S5.2]{MP2} (right before the proof of Theorem F) that if ${\rm codim}_Z (Z_{\rm sing}) \ge k$, then $C_k^\bullet$ is a resolution of $ \Omega_Z^k$.

\medskip

We are now ready to prove the injectivity theorem for the duals of the graded pieces of the Du Bois complex of a local complete intersection.

\begin{proof}[Proof of Theorem \ref{injectivity}]
Since $p (Z) \ge k -1$, using ($\ref{eqn:ineq}$) we deduce that 
$${\rm codim}_Z (Z_{\rm sing}) \ge 2k -1.$$
We conclude that the complex $C_k^\bullet$ is a locally free resolution of $\Omega_Z^k$ over $\shO_Z$; for $k = 0$ 
this is obvious, while for $k \ge 1$ the inequality above implies in particular that ${\rm codim}_Z (Z_{\rm sing}) \ge k$,
hence we can apply the  discussion before the start of the proof. 
We can therefore compute $\derR \mathcal{H}om_{\shO_Z} (\Omega_Z^k, \omega_Z)$ by applying the functor 
$\mathcal{H}om_{\shO_Z} (- , \omega_Z)$ to $C_k^\bullet$; this leads to the complex 
$$
0\to \Omega_X^{n-k}\otimes\omega_Z\otimes\omega_X^{-1}\to \Omega_X^{n-k+1}\otimes N_{Z/X}\otimes\omega_Z\otimes\omega_X^{-1}\to
\cdots\to \Omega_X^n\otimes {\rm Sym}^k N_{Z/X}\otimes\omega_Z\otimes\omega_X^{-1}\to 0
$$
placed in cohomological degrees $-k,\ldots,0$. But this turns out to be precisely the complex 
${\rm Gr}^E_{k-n}{\rm DR}_X\shH^r_Z(\shO_X)$, 
i.e. the associated graded of the de Rham complex of the $\Dmod_X$-module $\shH^r_Z(\shO_X)$ with respect 
to the Ext filtration; see \cite[Lemma 3.21]{MP2}, as well as \cite[\S5.2]{MP2}, especially the discussion before formula (5.5). In other words, under our assumption on the minimal exponent, we have a natural isomorphism 
$$\derR \mathcal{H}om_{\shO_Z} (\Omega_Z^k, \omega_Z) \simeq {\rm Gr}^E_{k-n}{\rm DR}_X\shH^r_Z(\shO_X)[-k].$$

On the other hand, by dualizing the isomorphism in ($\ref{isom-DB}$), we also have 
$$\derR \mathcal{H}om_{\shO_X} (\underline{\Omega}_Z^k, \omega_X) \simeq {\rm Gr}^F_{k-n}{\rm DR}_X\shH^r_Z(\shO_X)[-k-r],$$
where this time the associated graded is taken with respect to the Hodge filtration. Using Grothendieck duality for 
the inclusion $Z\hookrightarrow X$, we deduce an isomorphism 
$$\derR \mathcal{H}om_{\shO_Z} (\underline{\Omega}_Z^k, \omega_Z) \simeq 
{\rm Gr}^F_{k-n}{\rm DR}_X\shH^r_Z(\shO_X)[-k]$$
in $D^b_{\rm coh}(X)$.
We claim that via these identifications, the canonical morphism 
$$\varphi_k\colon {\rm Gr}^F_{k-n}{\rm DR}_X\shH^r_Z(\shO_X)\to {\rm Gr}^E_{k-n}{\rm DR}_X\shH^r_Z(\shO_X)$$
induced by the inclusion of the Hodge filtration into the Ext filtration is the dual of the canonical morphism $\Omega_Z^k\to \underline{\Omega}_Z^k$. 
Indeed, since $\shH^0(\underline{\Omega}_Z^k)$ is torsion-free, it follows from Remark~\ref{identify_morphisms} that it is enough to check this on the
complement of the singular locus of $Z$. We may thus assume that $Z$ is smooth, in which case the assertion is straightforward to check.

Note that $\varphi_k$ is a morphism of complexes placed in nonpositive degrees. 
Moreover, by definition of the
singularity level, since $p (Z) \ge k-1$, it follows $\varphi_k$ is an injective morphism of complexes and its cokernel is concentrated in degree $0$. It is then immediate to check that $\cH^i(\varphi_k)$ is an isomorphism for all $i\neq 0$ and $\cH^0(\varphi_k)$ is injective.
\end{proof}

The fact that $k$-rational implies $k$-Du Bois in our setting is now an easy application.

\begin{proof}[Proof of Theorem~\ref{main2}]
Since the connected components of $Z$ are irreducible, we may and will assume that $Z$ is irreducible. 
We prove the result by induction on $k\geq 0$. The canonical morphism
$\Omega_Z^p\to \derR\mu_*\Omega_{\widetilde{Z}}^k(\log D)$ factors as
$$\Omega_Z^k \overset{\sigma_k}{\longrightarrow} \underline{\Omega}_Z^k \overset{\tau_k}{\longrightarrow} \derR \mu_* \Omega_{\widetilde{Z}}^k 
(\log D)$$
(see Remark~\ref{rmk_factorization}). Dualizing this, 
we obtain the composition
$$ \derR \mathcal{H}om_{\shO_Z} (\derR \mu_* \Omega_{\widetilde{Z}}^k 
(\log D), \omega_Z) \overset{\tau'_k}{\longrightarrow}  \derR \mathcal{H}om_{\shO_Z} ( \underline{\Omega}_Z^k, \omega_Z) 
\overset{\sigma'_k}{\longrightarrow}  \derR \mathcal{H}om_{\shO_Z} (\Omega_Z^k , \omega_Z).$$

Our hypothesis gives that $\tau_k \circ \sigma_k$ is an isomorphism, and therefore so  is $\sigma_k' \circ \tau'_k$. On the other hand, since $Z$ is also $(k-1)$-rational, 
it is $(k-1)$-Du Bois by induction (this is vacuous if $k=0$). Therefore Theorem \ref{injectivity} applies, to the effect that $\sigma'_k$ induces injective 
maps in cohomology. These maps are also surjective, hence $\sigma'_k$ is an isomorphism, and so is $\sigma_k$. Therefore $Z$ has
 $k$-Du Bois singularities. 
\end{proof}

\subsection{Duality}\label{scn:duality}
The $k$-rationality condition has interesting consequences regarding duality for the graded pieces of the Du Bois complex. 
We start with a general construction:

\begin{proposition}\label{duality-DB}
For every irreducible variety $Z$ of dimension $d$ and every $k \ge 0$, there is a canonical morphism 
$$\psi_k \colon \underline{\Omega}_Z^k \longrightarrow \derR \mathcal{H}om_{\shO_Z} \big(\underline{\Omega}_Z^{d-k}, \omega_Z^\bullet [-d]\big)$$
in the bounded derived category of coherent sheaves on $Z$.
\end{proposition}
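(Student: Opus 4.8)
The plan is to use the tensor--$\mathcal{H}om$ adjunction to reduce the construction of $\psi_k$ to that of a pairing
$$\underline{\Omega}_Z^k \Ltensor_{\shO_Z} \underline{\Omega}_Z^{d-k} \longrightarrow \omega_Z^\bullet[-d];$$
indeed, giving a morphism $\underline{\Omega}_Z^k\to \derR\mathcal{H}om_{\shO_Z}(\underline{\Omega}_Z^{d-k},\omega_Z^\bullet[-d])$ in the derived category is the same datum as giving such a pairing. I would obtain this pairing as the composition of a cup product on the graded pieces of the Du Bois complex with a trace morphism landing in the dualizing complex.

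First I would produce the product
$$\underline{\Omega}_Z^k \Ltensor_{\shO_Z}\underline{\Omega}_Z^{d-k}\longrightarrow \underline{\Omega}_Z^d.$$
This is a manifestation of the multiplicative structure of the Du Bois complex: $(\underline{\Omega}_Z^\bullet,F)$ is a filtered commutative differential graded algebra in the filtered derived category, built from the wedge product on the de Rham complex of a hyperresolution, which is compatible with the Hodge filtration (see \cite{GNPP}). Passing to associated graded with respect to $F$ yields multiplications $\Gr^a_F\cdot\Gr^b_F\to \Gr^{a+b}_F$, and keeping track of the shifts in $\underline{\Omega}_Z^p=\Gr^p_F\underline{\Omega}_Z^\bullet[p]$ (the two shifts $[k]$ and $[d-k]$ combining to $[d]$) gives exactly the displayed product. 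On $Z_{\rm sm}$ this recovers the usual wedge product $\Omega_Z^k\tensor\Omega_Z^{d-k}\to\Omega_Z^d=\omega_Z$.

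Next I would construct a canonical trace morphism $t\colon \underline{\Omega}_Z^d\to \omega_Z^\bullet[-d]$. Fixing a resolution $\mu\colon\widetilde{Z}\to Z$ with $\widetilde{Z}$ smooth of dimension $d$, the functoriality of the Du Bois complex (Remark~\ref{rmk_factorization}) supplies a canonical morphism $\underline{\Omega}_Z^d\to\derR\mu_*\underline{\Omega}_{\widetilde{Z}}^d=\derR\mu_*\Omega_{\widetilde{Z}}^d=\derR\mu_*\omega_{\widetilde{Z}}$. Since $\omega_{\widetilde{Z}}=\omega_{\widetilde{Z}}^\bullet[-d]$ and $\mu$ is proper, the Grothendieck duality trace provides $\derR\mu_*\omega_{\widetilde{Z}}=\derR\mu_*\omega_{\widetilde{Z}}^\bullet[-d]\to\omega_Z^\bullet[-d]$; composing the two defines $t$. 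Composing $t$ with the product of the previous step and applying adjunction produces $\psi_k$, which over $Z_{\rm sm}$ restricts to the standard perfect pairing $\Omega_Z^k\tensor\Omega_Z^{d-k}\to\omega_Z$.

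The main point to verify is that $t$, and hence $\psi_k$, is independent of the auxiliary resolution $\mu$, so that the morphism is genuinely canonical. For this I would dominate any two resolutions by a common third one and invoke the transitivity of the Grothendieck trace together with the functoriality of the Du Bois-complex maps; alternatively, since both $\underline{\Omega}_Z^d$ and $\omega_Z^\bullet[-d]$ have no cohomology in negative degrees and the two candidate maps agree on the dense open $Z_{\rm sm}$, a torsion-freeness argument in the spirit of Remark~\ref{identify_morphisms} pins the map down. I expect this canonicity verification, rather than the formal construction, to be the only real subtlety: the multiplicative structure and the shift bookkeeping are standard.
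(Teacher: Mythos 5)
Your construction is correct in its main line, but it takes a genuinely different route from the paper's. The paper never uses any multiplicative structure on the Du Bois complex of the singular variety $Z$: it fixes a resolution $f \colon Y \to Z$ and defines $\psi_k = \gamma_k \circ \beta_k \circ \alpha_k$, where $\alpha_k \colon \underline{\Omega}_Z^k \to \derR f_* \Omega_Y^k$ is the functoriality map, $\beta_k$ transports the smooth duality isomorphism $\Omega_Y^k \simeq \derR\mathcal{H}om_{\shO_Y}\big(\Omega_Y^{d-k},\omega_Y^{\bullet}[-d]\big)$ through relative duality for $f$, and $\gamma_k$ is the Grothendieck dual of $\alpha_{d-k}$; canonicity is then checked by dominating two resolutions by a third and chasing commutative diagrams. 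You instead invoke the cup product $\underline{\Omega}_Z^k \Ltensor_{\shO_Z} \underline{\Omega}_Z^{d-k} \to \underline{\Omega}_Z^d$ on the graded pieces of $\underline{\Omega}_Z^{\bullet}$ itself, so that all that remains is a trace $t \colon \underline{\Omega}_Z^d \to \omega_Z^{\bullet}[-d]$ and tensor--Hom adjunction. What your approach buys: the resolution-dependence is isolated in the single map $t$, where transitivity of the Grothendieck trace settles it; you only need the trace (counit) of duality rather than the full relative duality isomorphism; and $\psi_k$ is visibly induced by a pairing extending the wedge product, which is conceptually attractive. What it costs: it requires the filtered multiplicative structure of $(\underline{\Omega}_Z^{\bullet},F)$ and its passage to graded pieces --- a true but nontrivial input going back to Du Bois and the multiplicative (Thom--Whitney/cubical) constructions --- which the paper's argument avoids entirely; you should supply a precise reference for the graded product in the form you use it, as \cite{GNPP} is not explicit about this. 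Note also that the statement only asserts existence of a canonical morphism, so you are not obliged to compare your $\psi_k$ with the paper's, though both restrict to the classical pairing on $Z_{\rm sm}$.

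One caveat: your ``alternative'' canonicity argument in the spirit of Remark~\ref{identify_morphisms} does not apply as stated. That remark requires the source of the morphism to be a coherent sheaf concentrated in degree $0$, whereas $\underline{\Omega}_Z^d$ is in general a complex with nonzero higher cohomology, and a morphism in the derived category out of such a complex is not determined by its restriction to a dense open subset under the hypotheses you list. So the domination/transitivity argument (which does work, and parallels the paper's own canonicity check) is not optional --- it is the argument.
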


\begin{proof}
Let $f \colon Y \to Z$ be any resolution of singularities. The functoriality of the Du Bois complex gives a morphism 
$$\alpha_k \colon \underline{\Omega}_Z^k \longrightarrow \derR f_* \Omega_Y^k$$
for each $k$. On the other hand, on $Y$ we have an isomorphism 
$$\tau^Y_k\colon \Omega_Y^k\overset{\simeq}\longrightarrow
\derR\mathcal{H}om_{\shO_Y}\big(\Omega_Y^{d-k},\omega_Y^{\bullet}[-d]\big)$$ and we get an isomorphism $\beta_k$ on $Z$ as the composition
$$\derR f_* \Omega_Y^k \overset{\simeq }\longrightarrow \derR f_*\derR\mathcal{H}om_{\shO_Y}\big(\Omega_Y^{d-k},\omega_Y^{\bullet}[-d]\big)
\overset{\simeq}\longrightarrow
\derR \mathcal{H}om_{\shO_Z}
\big(\derR f_* \Omega_Y^{d-k}, \omega_Z^\bullet [-d]\big),$$
where the first isomorphism is $\derR f_*(\tau^Y_k)$ and the second isomorphism is provided by relative duality for $f$. 
Finally, taking the Grothendieck dual of $\alpha_{d-k}$ provides a morphism
$$\gamma_k \colon \derR \mathcal{H}om 
\big(\derR f_* \Omega_Y^{d-k}, \omega_Z^\bullet [-d]\big) \longrightarrow 
\derR \mathcal{H}om 
\big(\underline{\Omega}_Z^{d-k}, \omega_Z^\bullet [-d]\big).$$
We define
$$\psi_k := \gamma_k \circ \beta_k\circ \alpha_k.$$

We need to show that this is independent of the choice of resolution. Since any two resolutions are dominated by a third one, it is enough to show that  if $f$ is as above and $g\colon W\to Y$ is
a proper birational morphism, with $W$ smooth, then the morphisms $\psi_k$ corresponding to $f$ and $h=f\circ g$ coincide.
This follows easily from the definitions once we know that the following diagram is commutative
$$
\begin{tikzcd}
\derR f_* \Omega_Y^k\dar\rar & \derR \mathcal{H}om \big(\derR f_* \Omega_Y^{d-k}, \omega_Z^\bullet [-d]\big)\\
\derR h_*\Omega_W^k\rar & \derR \mathcal{H}om\big(\derR h_* \Omega_W^{d-k}, \omega_Z^\bullet [-d]\big)\uar,
\end{tikzcd}
$$
in which the top horizontal map is the $\beta_k$ with respect to $f$ and the bottom map is $\beta_k$ with respect to $h$. 
This commutativity follows from the functoriality of relative duality and its compatibility with composition of proper morphisms, together with the commutativity
of the diagram
$$
\begin{tikzcd}
\Omega_Y^k\rar{\tau^Y_k}\dar & \derR\mathcal{H}om_{\shO_Y}\big(\Omega_Y^{d-k},\omega_Y^{\bullet}[-d]\big) \\
\derR g_*\Omega_W^k\rar{\derR g_*(\tau^W_k)}& \,\,\,\,\derR g_*\derR\mathcal{H}om_{\shO_W}\big(\Omega_W^{d-k},\omega_W^{\bullet}[-d]\big)\uar.
\end{tikzcd}
$$
The latter follows in turn from the fact that it trivially holds over any open subset over which $g$ is an isomorphism (note that we are comparing two morphisms between
vector bundles). This completes the proof of the proposition.
\end{proof}

It is interesting to understand under what assumptions $\psi_k$ is an isomorphism, as in the case of smooth varieties.
Note that this requires assumptions on the singularities. For instance, when $k = 0$, even when $Z$ is Cohen-Macaulay and Du Bois, $\psi_0$ being an isomorphism is equivalent to the condition $\mu_* \omega_{\widetilde Z} \simeq \omega_Z$, where $\mu \colon \widetilde{Z} \to Z$ is a resolution of singularities; in other words, it is equivalent to $Z$ having rational singularities. More generally, we now show that the condition that $\psi_k$ is an isomorphism is precisely the difference between having $k$-rational and 
$k$-Du Bois singularities.

\begin{proof}[Proof of Corollary~\ref{krat-duality}]
By Theorem~\ref{main2}, we may assume that $Z$ has $k$-Du Bois singularities, hence $p(Z)\geq k$ by 
Theorem~\ref{thm-DB-main}.
We consider a strong log-resolution $\mu\colon \widetilde{Z} \to Z$ as in Definition \ref{def_k_rat}. We put
$W = Z_{\rm sing}$, and $D = \mu^{-1}(W)_{\rm red}$. By  \cite[Proposition~3.3]{Steenbrink}, we have an exact triangle
\begin{equation}\label{eq_exact_trg_Steenbrink}
\derR \mu_*\Omega_{\widetilde{Z}}^{d-k}({\rm log}\,D)(-D)\longrightarrow \underline{\Omega}_Z^{d-k}\longrightarrow \underline{\Omega}_W^{d-k}\overset{+1}\longrightarrow.
\end{equation}
Since $p (Z) \ge k$, using ($\ref{eqn:ineq}$) we deduce that 
$$\dim W \le d - 2k - 1 < d - k.$$
It follows that $\underline{\Omega}_W^{d-k} = 0$, hence the above triangle implies that the canonical morphism
$$\derR \mu_*\Omega_{\widetilde{Z}}^{d-k}({\rm log}\,D)(-D)\longrightarrow \underline{\Omega}_Z^{d-k}$$ 
is an isomorphism. Equivalently, using Grothendieck duality, the induced morphism
$$\derR \mathcal{H}om_{\shO_Z} ( \underline{\Omega}_Z^{d-k}, \omega_Z) \overset{\nu_k}\longrightarrow \derR \mu_*\Omega_{\widetilde{Z}}^k ({\rm log}\,D)$$
is an isomorphism.  On the other hand, since 
$\Omega_Z^k \to \underline{\Omega}_Z^k$ is an isomorphism, it follows that the canonical morphism 
$\Omega_Z^k \to \derR \mu_*\Omega_{\widetilde{Z}}^k ({\rm log}\,D)$ can be identified with the composition 
$\nu_k \circ \psi_k$. Thus by definition $Z$ has $k$-rational singularities if and only if 
$\psi_k$ is an isomorphism.
\end{proof}

Note that in the situation of Corollary \ref{krat-duality}, it follows by duality that one can also compute $\underline{\Omega}_Z^{d-k}$ in terms of the sheaf of K\"ahler differentials $\Omega_Z^k$.

\begin{corollary}\label{dual-dual}
If $Z$  has $k$-rational singularities, then we have an isomorphism
$$\psi_{d-k} \colon \underline{\Omega}_Z^{d-k} \longrightarrow \derR \mathcal{H}om_{\shO_Z} (\Omega_Z^k, \omega_Z).$$
\end{corollary}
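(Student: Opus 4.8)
The plan is to deduce Corollary \ref{dual-dual} directly from Corollary \ref{krat-duality} by combining the isomorphism $\psi_k$ with a derived-dual manipulation. Since $Z$ has $k$-rational singularities, in particular it has $k$-Du Bois singularities (by Theorem \ref{main2}), so $\Omega_Z^k \to \underline{\Omega}_Z^k$ is an isomorphism. Corollary \ref{krat-duality} then tells us that the canonical morphism
$$\psi_k \colon \underline{\Omega}_Z^k \longrightarrow \derR \mathcal{H}om_{\shO_Z} ( \underline{\Omega}_Z^{d-k}, \omega_Z)$$
is an isomorphism. The idea is that this identity, together with the fact that the object $\underline{\Omega}_Z^{d-k}$ is reflexive with respect to Grothendieck duality, should let us invert the roles of $k$ and $d-k$ to produce $\psi_{d-k}$.

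First I would apply the duality functor $\derR \mathcal{H}om_{\shO_Z}(-, \omega_Z)$ to the isomorphism $\psi_k$, obtaining an isomorphism
$$\derR \mathcal{H}om_{\shO_Z} \big( \derR \mathcal{H}om_{\shO_Z} ( \underline{\Omega}_Z^{d-k}, \omega_Z), \omega_Z \big) \overset{\simeq}{\longrightarrow} \derR \mathcal{H}om_{\shO_Z} ( \underline{\Omega}_Z^k, \omega_Z).$$
The crux of the argument is biduality: the source should be canonically identified with $\underline{\Omega}_Z^{d-k}$ itself. Since $Z$ is Cohen-Macaulay (as it has rational singularities) and the relevant objects have finite-dimensional total cohomology, $\derR \mathcal{H}om_{\shO_Z}(-, \omega_Z)$ is an involutive duality on $D^b_{\mathrm{coh}}(Z)$, so the double-dual of $\underline{\Omega}_Z^{d-k}$ is canonically isomorphic to $\underline{\Omega}_Z^{d-k}$. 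After this identification, the displayed map reads as an isomorphism $\underline{\Omega}_Z^{d-k} \to \derR \mathcal{H}om_{\shO_Z} ( \underline{\Omega}_Z^k, \omega_Z)$, and since $\underline{\Omega}_Z^k \simeq \Omega_Z^k$ under our hypothesis, the target equals $\derR \mathcal{H}om_{\shO_Z} ( \Omega_Z^k, \omega_Z)$, which is exactly the desired target of $\psi_{d-k}$.

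The step requiring the most care is verifying that the isomorphism produced by this dualization procedure actually agrees with the canonically defined morphism $\psi_{d-k}$ from Proposition \ref{duality-DB}, rather than merely being some abstract isomorphism between the same two objects. I expect this to be the main obstacle: one must trace through the construction of $\psi_k$ (via a resolution $f$, the self-duality $\tau^Y_k$ of $\Omega_Y^k$, relative duality, and the dual of $\alpha_{d-k}$) and check that dualizing $\psi_k$ and applying biduality recovers the same construction performed with $k$ replaced by $d-k$. The key compatibility is that the self-duality isomorphisms $\tau^Y_k$ and $\tau^Y_{d-k}$ on the smooth variety $Y$ are mutually dual under $\derR\mathcal{H}om_{\shO_Y}(-,\omega_Y^\bullet[-d])$, together with the symmetry of relative duality for $f$ under taking adjoints. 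Granting these formal compatibilities, which hold because all the maps in sight are the standard ones and can be checked over the smooth locus where everything reduces to the tautological self-duality of the bundles $\Omega_Y^k$ and $\Omega_Y^{d-k}$, the identification of the dualized $\psi_k$ with $\psi_{d-k}$ follows, completing the proof.
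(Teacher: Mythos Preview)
Your proposal is correct and is precisely the argument the paper intends: the text preceding Corollary~\ref{dual-dual} simply says that it ``follows by duality'' from Corollary~\ref{krat-duality}, and you have fleshed out exactly that dualization argument. The biduality step is valid since $Z$ is Cohen--Macaulay (so $\omega_Z^\bullet=\omega_Z[d]$ and $\derR\mathcal{H}om_{\shO_Z}(-,\omega_Z)$ is an involution on $D^b_{\rm coh}(Z)$), and your identification of the dualized $\psi_k$ with the canonical $\psi_{d-k}$ via the symmetry $\tau^Y_k\leftrightarrow\tau^Y_{d-k}$ and checking over the smooth locus is the right way to close the loop.
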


\begin{remark}
The proof of Corollary~\ref{krat-duality} shows that if ${\rm codim}_Z(Z_{\rm sing})>k$, then
we have a natural isomorphism
$$\derR \mathcal{H}om_{\shO_Z} ( \underline{\Omega}_Z^{d-k}, \omega_Z) \simeq
\derR \mu_*\Omega_{\widetilde{Z}}^k ({\rm log}\,D).$$
In particular, this is the case if $2 p (Z) + 1 > k$.
\end{remark}

\begin{remark}[Depth of $\Omega_Z^k$]
It is well known (see e.g. \cite[Theorem 7.29]{PS}) that for any variety $W$ and any $p$, we have  $\shH^i(\underline{\Omega}_W^p) = 0$ for $i >  \dim W - p$. Thus a numerical consequence of Corollary \ref{dual-dual} is that if $Z$ has $k$-rational singularities, then
${\rm depth}(\Omega_Z^k) \ge  d- k$.
However, this holds  even if we only assume that $Z$ has $k$-Du Bois singularities:
this follows from the Auslander-Buchsbaum formula and the fact that in this case 
$C_k^\bullet$ gives a locally free resolution of $\Omega_Z^k$ (see the discussion before the proof of Theorem~\ref{injectivity}).
\end{remark}

\begin{remark}[Small singular locus]
We also note that the duality morphism $\psi_k$ has a more familiar interpretation 
for an arbitrary $d$-dimensional irreducible variety $Z$ whose singular locus $W$ has small dimension. 
Indeed, it follows from the exact triangle (\ref{eq_exact_trg_Steenbrink}) that the 
canonical morphism 
$$
\derR \mu_*\Omega_{\widetilde{Z}}^k ({\rm log}\,D)(-D)\longrightarrow \underline{\Omega}_Z^k
$$
is an isomorphism when $\dim W < k$, while just as in the proof of Corollary \ref{krat-duality}, the canonical 
morphism 
$$\derR \mathcal{H}om_{\shO_Z} ( \underline{\Omega}_Z^{d-k}, \omega_Z) \longrightarrow \derR \mu_*\Omega_{\widetilde{Z}}^k ({\rm log}\,D)$$
is an isomorphism when $\dim W < d- k$. 

We conclude that, in general, under the assumption $\dim W < {\rm min}\{k, d- k\}$, the duality morphism 
$\psi_k$ is naturally identified with the morphism 
$$\derR \mu_*\Omega_{\widetilde{Z}}^k ({\rm log}\,D)(-D)\longrightarrow  \derR \mu_*\Omega_{\widetilde{Z}}^k ({\rm log}\,D)$$
obtained by pushing forward the canonical inclusion on $\widetilde{Z}$. This holds for instance when
$Z$ has isolated singularities and $1\leq k\leq d-1$; in this case, at the level of cohomology the map can be studied as in \cite[\S3]{FL1}, using classical Hodge theory arguments involving the mixed Hodge structure on the cohomology of the link of the singularity. 
\end{remark}

\subsection{Local vanishing for $k$-Du Bois singularities}
We continue to assume that $Z \subseteq X$ is a local complete intersection closed subvariety of pure codimension $r$, and dimension $d = n-r$. For completeness, we start by recording a consequence of results we established in \cite[\S5.2]{MP2}. 
Let $f \colon Y \to X$ be a log resolution of the pair $(X, Z)$ and let $E = f^{-1} (Z)_{\rm red}$ (recall that we always assume that $f$ is an isomorphism
over $X\smallsetminus Z$).

\begin{corollary}
If $k$ is a nonnegative integer such that $p (Z) \ge k$, then 
$$R^q f_* \Omega_Y^p (\log E ) (-E) = 0 \,\,\,\,\,\,{\rm for~all} \,\,\,\,q \ge 1 \,\,\,\,{\rm and} \,\,\,\,p \le k.$$
\end{corollary}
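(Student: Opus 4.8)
The plan is to rephrase the hypothesis in terms of the Hodge and Ext filtrations on $\shH^r_Z(\shO_X)$, and then feed this into the log-resolution description of the graded de Rham complex from \cite[\S5.2]{MP2}. By \theoremref{thm-DB-main}, the assumption $p(Z)\ge k$ is equivalent to $Z$ having $k$-Du Bois singularities, but what I would actually use is the definition of the singularity level itself: $p(Z)\ge k$ means precisely that $F_j\shH^r_Z(\shO_X)=E_j\shH^r_Z(\shO_X)$ for every $j\le k$. Consequently, for each $p\le k$ the Hodge-graded de Rham complex ${\rm Gr}^F_{p-n}{\rm DR}_X\shH^r_Z(\shO_X)$ coincides with its Ext-graded counterpart ${\rm Gr}^E_{p-n}{\rm DR}_X\shH^r_Z(\shO_X)$, because this complex only involves the pieces ${\rm gr}_j$ with (effectively) $0\le j\le p\le k$, on which the two filtrations agree. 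As recalled in the proof of \theoremref{injectivity}, the Ext-graded complex is the \emph{explicit locally free} complex obtained by dualizing the truncated Eagon--Northcott complex $C_p^\bullet$ into $\omega_Z$.

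Next I would invoke the computation of \cite[\S5.2]{MP2} that expresses the graded de Rham complex of $\shH^r_Z(\shO_X)$, via the log resolution $f\colon Y\to X$ of the pair $(X,Z)$, in terms of the direct images $\derR f_*\Omega_Y^\bullet(\log E)(-E)$; the twist by $-E$ is exactly the one matching the Hodge filtration on the local cohomology module. That the twisted forms are the relevant objects is consistent with the Grothendieck-duality formula $(\Omega_Y^p(\log E)(-E))^{\vee}\simeq \Omega_Y^{n-p}(\log E)\otimes\omega_Y^{-1}$ used in the proof of \lemmaref{indep_res2}, which by relative duality for $f$ yields
$$\derR\mathcal{H}om_{\shO_X}\big(\derR f_*\Omega_Y^p(\log E)(-E),\omega_X\big)\simeq \derR f_*\Omega_Y^{n-p}(\log E).$$
Under the dictionary of \cite[\S5.2]{MP2}, the single sheaf $\Omega_Y^p(\log E)(-E)$ corresponds to a graded piece of the de Rham complex, and the desired statement $R^q f_*\Omega_Y^p(\log E)(-E)=0$ for $q\ge 1$ becomes the assertion that this piece carries no higher cohomology.

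Finally I would combine the two steps: for $p\le k=p(Z)$ the relevant piece is computed through the Ext filtration, hence identified with the locally free Eagon--Northcott complex, which as a resolution produces no higher direct-image contributions in this range; this forces $R^q f_*\Omega_Y^p(\log E)(-E)=0$ for all $q\ge1$ and $p\le k$. I expect the genuine obstacle to be the bookkeeping in the second step, namely verifying that the coincidence of the \emph{total} graded de Rham complexes descends to the vanishing of a \emph{single} higher direct image $R^q f_*\Omega_Y^p(\log E)(-E)$; this is precisely the finer term-by-term analysis of \cite[\S5.2]{MP2}, with the Du Bois (equivalently $F=E$) hypothesis entering only to rule out extra contributions in degrees $\le k$. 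Since the statement is explicitly flagged as a consequence of \emph{loc.\ cit.}, the write-up should ultimately be short, citing the relevant computation there together with \theoremref{thm-DB-main}.
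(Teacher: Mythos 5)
Your first half is fine but redundant: the identity $p(Z)\ge k$ does mean $F_j\shH^r_Z(\shO_X)=E_j\shH^r_Z(\shO_X)$ for $j\le k$, and since then ${\rm codim}_Z(Z_{\rm sing})\ge 2k+1$, the complex $C_p^\bullet$ is a locally free resolution of $\Omega_Z^p$, so the Hodge- and Ext-graded de Rham complexes agree for $p\le k$. But all this buys you is exactly Theorem~\ref{thm-DB-main}, which the paper simply cites: $Z$ is $k$-Du Bois, i.e.\ $\underline{\Omega}_Z^p\simeq\Omega_Z^p$ for $p\le k$, equivalently $\shH^q(\underline{\Omega}_Z^p)=0$ for $q\ge 1$.

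The genuine gap is precisely the step you flag as ``the genuine obstacle'' and never resolve. The ``dictionary'' you invoke --- that $\derR f_*\Omega_Y^p(\log E)(-E)$ ``corresponds to a graded piece of the de Rham complex'' of $\shH^r_Z(\shO_X)$ --- does not exist and cannot: every graded piece of ${\rm DR}_X\shH^r_Z(\shO_X)$ is supported on $Z$, whereas $f_*\Omega_Y^p(\log E)(-E)$ is a subsheaf of $\Omega_X^p$ that is generically equal to $\Omega_X^p$. The duality formula you quote only converts the question into one about $\derR f_*\Omega_Y^{n-p}(\log E)$, which again is not supported on $Z$, so it cannot close the loop either. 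The missing ingredient is Steenbrink's exact triangle \cite[Proposition~3.3]{Steenbrink}
$$\derR f_*\Omega_Y^p(\log E)(-E)\longrightarrow \Omega_X^p\longrightarrow \underline{\Omega}_Z^p\overset{+1}\longrightarrow,$$
which is (\ref{eq0_marginal-LCI}) in the paper: the twisted pushforward differs from $\underline{\Omega}_Z^p$ (the dual of the graded de Rham piece, by (\ref{isom-DB})) exactly by the term $\Omega_X^p$, and no identification can erase that term. With the triangle in hand the corollary is immediate, and this is the paper's actual proof (it reduces to \cite[Corollary~1.2]{MOPW}): for $p\le k$ the object $\underline{\Omega}_Z^p\simeq\Omega_Z^p$ is a sheaf, $\shH^q(\Omega_X^p)=0$ for $q\ge 1$, and $\Omega_X^p\to\shH^0(\underline{\Omega}_Z^p)=\Omega_Z^p$ is surjective, being the restriction map on K\"ahler differentials; the long exact sequence then kills $R^qf_*\Omega_Y^p(\log E)(-E)$ for all $q\ge 1$, including the boundary case $q=1$. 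Without this triangle (or an equivalent statement), knowing that $\underline{\Omega}_Z^p$ is computed by a locally free resolution says nothing about the direct image you are trying to control.
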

\begin{proof}
The hypothesis implies that $\shH^q (\underline\Omega_Z^p) = 0$ in the range of the conclusion, by
Theorem \ref{thm-DB-main}. The proof is then identical to that of  \cite[Corollary 1.2]{MOPW}.
\end{proof}

We next establish the generalization of \cite[Theorem 1.4]{MOPW} to local complete intersections.

\begin{theorem}\label{marginal-LCI}
For every $q \le p (Z) + 1$ we have 
$$\shH^{d - q} (\underline\Omega_Z^q) = 0,$$
unless $q =d$ (in which case $Z$ is either smooth or a curve with nodal singularities). In particular, 
we have
\begin{equation}\label{eq1_marginal_LCI}
R^{d-q+1} f_* \Omega_Y^q (\log E ) (-E) = 0,
\end{equation}
unless $q=d$ or $q=d+1$ (in the latter case $Z$ being smooth). 
\end{theorem}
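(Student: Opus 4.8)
The plan is to read the vanishing straight off the filtration comparison already set up in the proof of \theoremref{injectivity}, and then to transfer it to the stated local vanishing through the comparison of $\underline\Omega_Z^q$ with pushforwards of twisted log forms used in the corollary preceding the theorem. Write $M=\shH^r_Z(\shO_X)$. Recall from that proof that, under the hypothesis $q\le p(Z)+1$ (equivalently $p(Z)\ge q-1$), one has natural identifications
$$\derR\mathcal{H}om_{\shO_Z}(\Omega_Z^q,\omega_Z)\simeq {\rm Gr}^E_{q-n}{\rm DR}_X M[-q],\qquad \derR\mathcal{H}om_{\shO_Z}(\underline\Omega_Z^q,\omega_Z)\simeq {\rm Gr}^F_{q-n}{\rm DR}_X M[-q],$$
and that the comparison map $\varphi_q\colon {\rm Gr}^F_{q-n}{\rm DR}_X M\to {\rm Gr}^E_{q-n}{\rm DR}_X M$, which is dual to $\Omega_Z^q\to\underline\Omega_Z^q$, is injective as a morphism of complexes with cokernel $Q$ a single coherent sheaf concentrated in cohomological degree $0$. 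This last fact, coming directly from the definition of the singularity level, is what drives the argument.

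First I would dualize once more. Applying $\derR\mathcal{H}om_{\shO_Z}(-,\omega_Z)$ to the exact triangle ${\rm Gr}^F_{q-n}{\rm DR}_X M\to {\rm Gr}^E_{q-n}{\rm DR}_X M\to Q\overset{+1}{\longrightarrow}$ and invoking Grothendieck biduality on $Z$ --- valid because $Z$ is a local complete intersection, hence Cohen--Macaulay with dualizing sheaf $\omega_Z$ --- the two identifications turn into a triangle
$$\derR\mathcal{H}om_{\shO_Z}(Q,\omega_Z)[q]\longrightarrow \Omega_Z^q\longrightarrow \underline\Omega_Z^q\overset{+1}{\longrightarrow}.$$
The associated long exact sequence of cohomology sheaves reads, in degree $d-q$,
$$\shH^{d-q}(\Omega_Z^q)\longrightarrow \shH^{d-q}(\underline\Omega_Z^q)\longrightarrow \shExt^{d+1}_{\shO_Z}(Q,\omega_Z).$$
When $q<d$ the left-hand term vanishes because $\Omega_Z^q$ is a sheaf placed in degree $0$, and the right-hand term vanishes because $Q$ is a coherent sheaf while $Z$ is Cohen--Macaulay of dimension $d$, so that $\shExt^i_{\shO_Z}(-,\omega_Z)=0$ for $i>d$. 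Hence $\shH^{d-q}(\underline\Omega_Z^q)=0$, giving the desired vanishing for all $q\le p(Z)+1$ with $q\neq d$. It is worth stressing that, unlike the automatic vanishing $\shH^{>d-q}(\underline\Omega_Z^q)=0$, this kills the top potentially nonzero cohomology sheaf, and that the hypothesis enters precisely through $Q$ being a sheaf in degree $0$.

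It remains to understand the boundary value $q=d$, where I would argue by the codimension estimate. If $q=d\le p(Z)+1$ and $Z$ is singular, then $p(Z)\ge d-1$, so ($\ref{eqn:ineq}$) gives ${\rm codim}_Z(Z_{\rm sing})\ge 2d-1$; since $Z_{\rm sing}$ is nonempty this forces $d\le 1$. Thus $Z$ is either smooth or a one-dimensional local complete intersection with $p(Z)\ge 0$, that is, a Du Bois curve; as Du Bois singularities are seminormal and a seminormal local complete intersection curve singularity is a node, $Z$ is nodal, which accounts for the exceptional clause.

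Finally, for the ``in particular'' statement I would feed the vanishing $\shH^{d-q}(\underline\Omega_Z^q)=0$ into the comparison between the graded Du Bois complex and pushforwards of twisted log forms underlying the preceding corollary (the local complete intersection analogue of \cite[Corollary~1.2]{MOPW}, via \cite[\S5.2]{MP2}). That comparison provides isomorphisms $R^{j}f_*\Omega_Y^q(\log E)(-E)\simeq \shH^{j-1}(\underline\Omega_Z^q)$ for $j\ge 2$; taking $j=d-q+1$, which is $\ge 2$ exactly when $q\le d-1$, converts the computed vanishing into ($\ref{eq1_marginal_LCI}$). The two remaining values $q=d$ and $q=d+1$ correspond to cohomological degrees $1$ and $0$, where the comparison degenerates and no vanishing is produced, so they are genuinely excluded. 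I expect the main obstacle to lie not in the homological algebra of the first two steps, which is clean, but in making the degree shift in this last comparison precise and checking it is independent of the codimension $r$, together with the explicit degree-$\le 1$ analysis needed to handle the boundary cases (including the smoothness statement recorded at $q=d+1$).
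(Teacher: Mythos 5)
Your proof is correct, and although it shares its skeleton with the paper's argument --- both run through the identifications established in the proof of \theoremref{injectivity} (legitimate here since $q\le p(Z)+1$ is equivalent, via Theorem \ref{thm-DB-main}, to the $(q-1)$-Du Bois hypothesis of that theorem), both exploit that $\varphi_q$ is an injective morphism of complexes with cokernel a sheaf in degree $0$, and both settle the ``in particular'' clause using Steenbrink's exact triangle --- the execution of the central vanishing is genuinely different. The paper stays on $X$: it identifies $\shH^{d-q}(\underline\Omega_Z^q)$ with $\shExt^n_{\shO_X}\big({\rm Gr}^F_{q-n}{\rm DR}_X\shH^r_Z(\shO_X),\omega_X\big)$, kills $\shExt^{n+1}_{\shO_X}(\shF,\omega_X)$ because $X$ is smooth of dimension $n$, and then disposes of $\shExt^n_{\shO_X}(B_q^{\bullet},\omega_X)$ by a spectral-sequence computation on the explicit Eagon--Northcott-type complex, using $q\le d-1$ together with $\shExt^j_{\shO_X}(\shE,\omega_X)=0$ for $j\neq r$ when $\shE$ is locally free on $Z$. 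You instead dualize the whole triangle back over $Z$, which by biduality converts those two Ext computations into two essentially trivial facts: $\Omega_Z^q$ is a sheaf in degree $0$ (so $\shH^{d-q}(\Omega_Z^q)=0$ for $q<d$), and $\shExt^{d+1}_{\shO_Z}(Q,\omega_Z)=0$ because $\omega_Z$ has finite injective dimension $d$ on the Gorenstein (being lci) variety $Z$. This is the same vanishing in dual clothing --- indeed, Grothendieck duality for $Z\hookrightarrow X$ identifies the paper's $\shExt^n_{\shO_X}(B_q^\bullet,\omega_X)$ with your $\shH^{d-q}(\Omega_Z^q)$ --- but your version is shorter, avoids the explicit complex entirely, and makes transparent where the hypothesis enters (through $Q$ being a sheaf); what it requires in exchange is the care you correctly take that biduality applies and that the dual of $\varphi_q$ is $\sigma_q$, which is precisely what the proof of \theoremref{injectivity} supplies. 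Your boundary analysis at $q=d$ matches the paper's (the bound (\ref{eqn:ineq}) forces $d\le 1$, and a Du Bois lci curve is nodal), your comparison $R^jf_*\Omega_Y^q(\log E)(-E)\simeq\shH^{j-1}(\underline\Omega_Z^q)$ for $j\ge 2$ is exactly what the Steenbrink triangle gives, and the two loose ends you defer are immediate: for $q=d+1$ the same codimension bound shows $Z$ must be smooth, while for $q\ge d+2$ (possible only when $Z$ is smooth) the vanishing (\ref{eq1_marginal_LCI}) concerns a negative-degree direct image and is trivial.
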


\begin{proof}
Note first that by \cite[Proposition~3.3]{Steenbrink}, we have an exact triangle
\begin{equation}\label{eq0_marginal-LCI}
\derR f_*\Omega_{Y}^q({\rm log}\,E)(-E)\longrightarrow \Omega_X^q\longrightarrow \underline{\Omega}_Z^q\overset{+1}\longrightarrow.
\end{equation}
This immediately implies that (\ref{eq1_marginal_LCI}) holds if $Z$ is smooth and $q\neq d+1$. The first assertion in the theorem
is also clear if $Z$ is smooth, hence from now on we assume that $Z$ is singular.

In this case it follows from (\ref{eqn:ineq}) that 
$p (Z) \le \frac{d-1}{2}$, hence 
\begin{equation}\label{eq2_marginal-LCI}
q \le \frac{d+1}{2}<d+1.
\end{equation}
In particular, we have $q \neq d +1$. 
We thus have $\shH^{d +1- q} (\Omega_X^q) = 0$, and the vanishing in (\ref{eq1_marginal_LCI})
follows if we know that $\shH^{d - q} (\underline\Omega_Z^q) = 0$. 

Therefore it is enough to prove the first assertion in the theorem. Using our previous constructions, this will be reduced to a statement in commutative algebra. 
 We know that $q\leq d$ and it follows from (\ref{eq2_marginal-LCI})
that if $q=d$, then $d=1$. Hence $Z$ is a locally complete intersection curve with Du Bois singularities, and it is known that 
this implies that $Z$ has nodal singularities. (Note that in this case we
clearly have $\cH^{0}(\underline{\Omega}_Z^1)\neq 0$.)

From now on we assume that $q\leq d-1$. We use the notation in the proof of Theorem~\ref{injectivity}.
By ($\ref{isom-DB}$) we have
$$\cH^{d-q}(\underline{\Omega}_Z^q)
\simeq {\mathcal Ext}^n_{\shO_X}\big({\rm Gr}_{q-n}^F{\rm DR}_X(\cH^r_Z(\shO_X)),\omega_X\big).$$
The hypothesis $p (Z) + 1\geq q$ implies that $F_i\shH^r(\shO_Z)=E_i\shH^r(\shO_Z)$ for all $i\leq q-1$, 
hence the morphism of complexes
$$\varphi_q\colon {\rm Gr}_{q-n}^F{\rm DR}_X(\cH^r_Z(\shO_X))\to B_q^{\bullet}: = {\rm Gr}_{q-n}^E{\rm DR}_X(\cH^r_Z(\shO_X))$$
is injective and its cokernel is a sheaf $\shF$ (supported in cohomological degree $0$).
We thus have an exact sequence
$${\mathcal Ext}^n_{\shO_X}(B_q^{\bullet},\omega_X)\to {\mathcal Ext}^n_{\shO_X}\big({\rm Gr}_{q-n}^F{\rm DR}_X(\cH^r_Z(\shO_X)),\omega_X\big)
\to {\mathcal Ext}^{n+1}_{\shO_X}(\shF,\omega_X)=0,$$
and we see that it is enough to show that ${\mathcal Ext}^n_{\shO_X}(B_q^{\bullet},\omega_X)=0$. 

Recall from the proof of Theorem~\ref{injectivity} that $B_q^\bullet$ can be described as the complex 
$$
0\to \Omega_X^{n-q}\otimes\omega_Z\otimes\omega_X^{-1}\to \Omega_X^{n-q+1}\otimes N_{Z/X}\otimes\omega_Z\otimes\omega_X^{-1}\to
\cdots\to \Omega_X^n\otimes {\rm Sym}^q N_{Z/X}\otimes\omega_Z\otimes\omega_X^{-1}\to 0
$$
placed in cohomological degrees $-q,\ldots,0$. The spectral sequence
$$E_1^{i,j} : = {\mathcal Ext}^j_{\shO_X}(B_q^{-i},\omega_X)\Rightarrow {\mathcal Ext}_{\shO_X}^{i+j}(B_q^{\bullet},\omega_X)$$
shows that it is enough to prove that 
$${\mathcal Ext}^{n-m}_{\shO_X} \big(\Omega_X^{n- m} \otimes  {\rm Sym}^{q-m} N_{Z/X} \otimes {\rm det} N_{Z/X},\omega_X\big) = 0\quad\text{for}\quad 0\leq m\leq q.$$
This is a consequence of the fact that $q \le d -1$, combined with the fact that if $r=n-d$, then for every locally free sheaf $\shE$ on $Z$ we have 
$$
{\mathcal Ext}_{\shO_X}^j(\shE,\omega_X)=0 \,\,\,\,\,\,{\rm for} \,\,\,\,j \neq r.
$$
\end{proof}

 We are now able to prove the local vanishing theorem for $k$-Du Bois singularities.
 
 \begin{proof}[Proof of Theorem~\ref{DB-vanishing}]
By a general vanishing result due to Steenbrink (see \cite[Theorem~2]{Steenbrink}), we always have
$R^q\mu_*\Omega^p_{\widetilde{Z}}(\log\,D) (-D)=0$ if $p+q> d$, i.e. the statement in i) (for an arbitrary variety $Z$). From now on we assume that $p+q\leq d$. 

By \cite[Proposition~3.3]{Steenbrink},  if $W=Z_{\rm sing}$, then we have an exact triangle
$$
\derR \mu_*\Omega_{\widetilde{Z}}^p({\rm log}\,D)(-D)\longrightarrow\underline{\Omega}_Z^p\longrightarrow \underline{\Omega}_W^p\overset{+1}\longrightarrow.
$$
Taking the long exact sequence in cohomology, we obtain the exact sequence
\begin{equation}\label{eq10_DB}
\cH^{q-1}(\underline{\Omega}^p_W)\longrightarrow R^q\mu_*\Omega_{\widetilde{Z}}^p(\log\,D)(-D)\longrightarrow\cH^q(\underline{\Omega}_Z^p).
\end{equation}

By Theorem \ref{thm-DB-main}, $Z$ having  $k$-Du Bois singularities is equivalent to the fact that $p(Z)\geq k$. 
This in turn implies by ($\ref{eqn:ineq}$) that $\dim(W)\leq d-2k-1$. Using a general result on the vanishing of the cohomologies of the graded pieces of the Du Bois complex (see \cite[Theorem~7.29]{PS}), we then have
$$
\cH^{q-1}(\underline{\Omega}_W^p)=0\quad\text{if}\quad p+q \ge d-2k + 1.
$$
On the other hand, if $p\leq k$ and $q\geq 1$, we have 
$\cH^q(\underline{\Omega}_Z^p)=0$ by definition of $k$-Du Bois singularities. 
The assertion in ii) thus follows from the exact sequence (\ref{eq10_DB}).

 The assertion in iii) follows directly from Theorem \ref{marginal-LCI}; note that we avoid the exceptional cases in that theorem
 since $q\leq d-1$. Let's now prove the assertion in iv). 
 
 We assume that $q\geq\max\{d-k-1,1\}$. Suppose first that $p+q\geq d-2k+1$. If $p\leq k$, then we are done by ii). Hence we may and will assume that
 $p\geq k+1$. By i), we may also assume that $p+q\leq d$, hence $q\leq d-p\leq d-k-1$. Since we are assuming $q\geq d-k-1$, it follows that
 $q=d-k-1$ and $p=k+1$, hence we are done by iii).
 
 Suppose now that $p+q\leq d-2k$. Since $q\geq d-k-1$, it follows that $p+k\leq 1$, 
 hence we are left with 3 possible case, when
$(k,p)=(1,0)$, $(0,0)$, or $(0,1)$.

If $p=0$, then we need to show that 
$$R^q\mu_*\shO_{\widetilde{Z}}(-D)=0 \,\,\,\,\,\, {\rm for} \,\,\,\, q\geq \max \{d-k-1, 1\}.$$
If $k=1$, then the condition $p+q\leq d-2k$ gives $d\geq 3$, while the inequality (\ref{eqn:ineq}) gives $\dim(W)\leq d-3$. 
If $k=0$, then we only get $\dim(W)\leq d - 2$, using the fact that $Z$ is normal. Since the case when $Z$ is a smooth curve is obvious, we assume $d \ge 2$. Using the fact that $Z$ is normal, with Du Bois singularities, we can then apply
\cite[Theorem~13.3]{GKKP} which says that $R^i\mu_*\shO_{\widetilde{Z}}(-D)=0$ for $i>\max\{\dim(W), 0\}$. Therefore we are done in this case.

If $p=1$ and $k=0$, then we have $d \ge 2$ and we need to show that 
$$R^q\mu_*\Omega_{\widetilde{Z}}^1(\log\,D)(-D)=0 \,\,\,\,\,\,{\rm  for}\,\,\,\, q\geq d-1.$$  
This follows by applying \cite[Theorem~14.1]{GKKP}.\footnote{Note that the result we refer to is stated for log canonical pairs $(Z,E)$. However, when $E=0$,  the same proof works 
when $Z$ is only assumed to have Du Bois singularities; this is the only condition that is used in the proof, via \cite[Theorem~13.3]{GKKP}.}
This completes the proof of the theorem.
\end{proof}

\section{Results for hypersurfaces}
In this chapter, we show that $k$-rational singularities are characterized numerically in terms of their
minimal exponent; see the discussion around Theorem \ref{thm_main} and Corollary \ref{drop} in the introduction.

All throughout this chapter, $Z$ will be a (nonempty) hypersurface, meaning a closed subscheme of pure codimension $1$, in the smooth irreducible $n$-dimensional variety $X$.

\subsection{Minimal exponent $>k+1$ implies $k$-rational singularities}
 Our goal in this section is to prove one implication in Theorem~\ref{thm_main}, namely to show the following:

\begin{theorem}\label{main_thm1}
If $k$ is a nonnegative integer such that 
$\widetilde{\alpha}(Z)>k+1$, then $Z$ has $k$-rational singularities.
\end{theorem}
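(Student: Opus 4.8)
The plan is to verify the two conditions recorded in Remark~\ref{rmk_level_zero}: the vanishings $R^q\mu_*\Omega_{\widetilde{Z}}^i(\log\,D)=0$ for $i\leq k$ and $q\geq 1$, and the fact that the canonical morphism (\ref{eq_rmk_level_zero}) is an isomorphism for every $i\leq k$. I would begin by extracting the numerical input of the hypothesis. Since $\widetilde{\alpha}(Z)>k+1$ and $p(Z)=\lfloor\widetilde{\alpha}(Z)\rfloor-1$ for a hypersurface, we get $p(Z)\geq k$; combined with (\ref{eqn:ineq}) this gives ${\rm codim}_Z(Z_{\rm sing})\geq 2k+1$ (and, via Theorem~\ref{thm-DB-main}, that $Z$ is $k$-Du Bois). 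Moreover $\widetilde{\alpha}(Z)>k+1\geq 1$, so by Saito's criterion \cite{Saito-B} the hypersurface $Z$ has rational, hence normal, singularities; being a hypersurface it is also Cohen--Macaulay.

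With these in hand the vanishing condition is immediate: it is exactly part i) of Theorem~\ref{thm_vanishing}, applied with $p=i\leq k$ and $q\geq 1$. It remains to treat the isomorphism (\ref{eq_rmk_level_zero}) for $0\leq i\leq k$. For $i=0$ the morphism $\shO_Z\to\mu_*\shO_{\widetilde{Z}}$ is an isomorphism precisely because $Z$ is normal. For $1\leq i\leq k$, since $Z$ has rational singularities, Lemma~\ref{lem:reflexive} reduces the claim to showing that $\Omega_Z^i$ is a reflexive sheaf, so this reflexivity is the one piece of genuine local-algebra content.

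To establish it I would use the resolution $C_i^\bullet$ from the discussion preceding the proof of Theorem~\ref{injectivity}. Because ${\rm codim}_Z(Z_{\rm sing})\geq 2k+1\geq i$, the complex $C_i^\bullet$ is a locally free resolution of $\Omega_Z^i$ of length $i$, so ${\rm pd}_{\shO_{Z,x}}(\Omega_Z^i)_x\leq i$ at every $x$; the Auslander--Buchsbaum formula together with the Cohen--Macaulayness of $Z$ then yields ${\rm depth}\,(\Omega_Z^i)_x\geq\dim\shO_{Z,x}-i$. At points of $Z_{\rm sm}$ the sheaf is locally free, while at $x\in Z_{\rm sing}$ one has $\dim\shO_{Z,x}\geq 2k+1$, whence ${\rm depth}\,(\Omega_Z^i)_x\geq 2k+1-i\geq k+1\geq 2$ for $1\leq i\leq k$. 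Thus $\Omega_Z^i$ satisfies Serre's condition $S_2$ on the normal variety $Z$ and is therefore reflexive, completing the verification of (\ref{eq_rmk_level_zero}) and hence of $k$-rationality. The main obstacle in this argument is not the deduction itself, which is formal once the inputs are assembled, but rather Theorem~\ref{thm_vanishing}, whose proof via mixed Hodge modules supplies the essential vanishing; granting that, the passage to $k$-rationality is a short consequence of Remark~\ref{rmk_level_zero}, Lemma~\ref{lem:reflexive}, and the depth estimate above.
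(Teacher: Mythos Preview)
Your proposal is correct and follows essentially the same route as the paper's proof: both invoke Theorem~\ref{thm_vanishing}(i) for the higher direct image vanishings, reduce the $\cH^0$-isomorphism to reflexivity of $\Omega_Z^i$ via Lemma~\ref{lem:reflexive}, and handle $i=0$ by normality. The only difference is cosmetic: the paper cites \cite[Remark~3.5]{MOPW} for the reflexivity of $\Omega_Z^i$, whereas you supply the underlying depth argument explicitly using the resolution $C_i^{\bullet}$ and Auslander--Buchsbaum (which is, in effect, the content of that remark).
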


\begin{remark}\label{rmk1_main_thm1}
Under the hypotheses in Theorem~\ref{main_thm1}, we have $\widetilde{\alpha}(Z)>1$, hence $Z$ is normal; in particular, it is reduced 
and all its connected components are irreducible.\footnote{It follows from \cite[Theorem~0.4]{Saito-B} that $\widetilde{\alpha}(Z)>1$ implies that $Z$ has rational singularities, hence in particular it is normal. 
However, we prefer to give a direct argument for this elementary fact.}
Indeed, we have ${\rm lct}(X,Z)=1$, hence $Z$ is reduced and in codimension $1$ its singularities are at worst nodal singularities. Since the Bernstein-Sato polynomial
of a union of two smooth divisors meeting transversally is
$(s+1)^2$, we conclude that if $\widetilde{\alpha}(Z)>1$, we have $Z$ smooth in codimension $1$. Therefore $Z$ is normal by Serre's criterion since it is Cohen-Macaulay,
being a hypersurface. 
\end{remark}

In order to prove the theorem, we may clearly treat each connected component of $Z$ separately. In light of Remark~\ref{rmk1_main_thm1}, we thus may and will
assume from now on that $Z$ is normal. We choose a log resolution $\pi\colon Y\to X$ of $(X,Z)$
and write $\pi^{-1}(Z)_{\rm red}=E$. The morphism $\pi$ may be taken to be projective, and we will do so in what follows. The key input for the proof of Theorem~\ref{main_thm1} is the following vanishing result, interesting in its own right.

\begin{theorem}\label{thm_vanishing_log}
If $k$ is a nonnegative integer and $\widetilde{\alpha}(Z)>k+1$, then 
$$R^q\pi_*\Omega_Y^p(\log\,E)=0$$ 
in each of the following cases:
\begin{enumerate}
\item[i)] $p+q\geq n+1$;
\item[ii)] $p\leq k+1$ and $q\geq 1$;
\item[iii)] $p=k+2$ and $q=n-k-2$. 
\end{enumerate}
In particular, it holds for all $p$ and all $q \ge \max \{ n- k - 2, 1\}$.
\end{theorem}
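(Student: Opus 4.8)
The plan is to prove \theoremref{thm_vanishing_log} by reducing the geometric vanishing statements on $Y$ to a single computation with a mixed Hodge module on $X$, namely the local cohomology sheaf $\cH_Z^1(\shO_X)$ equipped with its Hodge filtration. The crucial translation device is Steenbrink's exact triangle (as used in the proof of \theoremref{marginal-LCI})
$$
\derR\pi_*\Omega_Y^p(\log E)(-E)\longrightarrow \Omega_X^p\longrightarrow \underline{\Omega}_Z^p\overset{+1}\longrightarrow,
$$
together with the dual identification of $\underline{\Omega}_Z^p$ in terms of $\Gr^F\DR_X\cH_Z^1(\shO_X)$ from ($\ref{isom-DB}$). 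The hypothesis $\widetilde{\alpha}(Z)>k+1$ is equivalent to $p(Z)\ge k+1$ (since $p(Z)=\lfloor\widetilde{\alpha}(Z)\rfloor-1$), which in turn means $Z$ is $(k+1)$-Du Bois by \theoremref{thm-DB-main}. The overall strategy is: first treat part i) by a general Steenbrink vanishing, then obtain part ii) by combining Du Bois vanishing on $Z$ with a vanishing for the twisted sheaves $R^q\pi_*\Omega_Y^p(\log E)(-E)$, and finally handle the single ``boundary'' case iii) by the sharper analysis of \theoremref{marginal-LCI}.

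First I would dispose of part i). For $p+q\ge n+1$ the sheaf $\Omega_Y^p(\log E)$ vanishes in high enough total degree after pushforward by the standard Grothendieck/Steenbrink-type vanishing for log forms on a smooth projective (relative) variety; indeed $\derR\pi_*\Omega_Y^p(\log E)$ is concentrated in degrees $\le n-p$ by the analogue of the Akizuki--Nakano--Steenbrink vanishing already invoked for the $(-E)$-twisted version in the proof of \theoremref{DB-vanishing}. This is purely formal and independent of the minimal exponent hypothesis, so I would state it first and then assume $p+q\le n$ throughout the rest. Next, for part ii) I would exploit that we have \emph{two} closely related sheaves to control, $R^q\pi_*\Omega_Y^p(\log E)$ and its twist $R^q\pi_*\Omega_Y^p(\log E)(-E)$, linked by the residue sequence $0\to\Omega_Y^p(\log E)(-E)\to\Omega_Y^p(\log E)\to\Omega_E^p\to 0$ (or the companion sequence involving $\Omega_Y^p$ and restriction to $E$). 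The twisted vanishing $R^q\pi_*\Omega_Y^p(\log E)(-E)=0$ for $p\le k+1$, $q\ge 1$ is exactly the Du Bois vanishing $\cH^q(\underline{\Omega}_Z^p)=0$ read off from the Steenbrink triangle above (the map $\Omega_X^p\to\underline{\Omega}_Z^p$ being an isomorphism in these degrees precisely because $Z$ is $(k+1)$-Du Bois). The untwisted statement in ii) should then follow by feeding this into the long exact sequence of the residue triangle, where the remaining term is a pushforward of forms on the SNC divisor $E$, whose higher cohomology one controls inductively using that the components of $E$ and their intersections are again smooth and dominate lower-dimensional strata.

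The main obstacle will be the boundary case iii), where $p=k+2$ sits exactly one step beyond the Du Bois range, so the clean vanishing $\cH^q(\underline{\Omega}_Z^p)=0$ is no longer available and one is forced to use the finer information in \theoremref{marginal-LCI}. The plan there is to match $q=n-k-2$ with the single surviving cohomological degree: since $Z$ is a hypersurface, $d=n-1$, so $p=k+2$ and $q=n-k-2=d-(k+1)$ place us precisely at the ``marginal'' slot $\shH^{d-(p-1)}$ governed by that theorem, and one checks that the exceptional cases ($q=d$ or $q=d+1$, i.e.\ the nodal-curve and smooth situations) are excluded under our hypotheses because $p(Z)\ge k+1$ forces $\dim Z_{\rm sing}$ to be small via ($\ref{eqn:ineq}$). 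Concretely I expect the delicate point to be bookkeeping the shift between the log form $\Omega_Y^p(\log E)$ on $Y$ and the graded piece $\underline{\Omega}_Z^{p-1}$ on $Z$, and verifying that the relevant $\shExt^n$-term computed from the Ext-filtration resolution $B_q^\bullet$ vanishes in exactly this degree; this is where the hypothesis $\widetilde{\alpha}(Z)>k+1$ (rather than $\ge$) is genuinely used. The concluding ``in particular'' clause then follows formally: for $q\ge\max\{n-k-2,1\}$ any pair $(p,q)$ with $p+q\le n$ either falls into ii) (when $p\le k+1$) or is pinned to the case $p=k+2$, $q=n-k-2$ of iii), while $p+q\ge n+1$ is covered by i).
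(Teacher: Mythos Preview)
Your proposal contains a fundamental error right at the start: you claim that $\widetilde{\alpha}(Z)>k+1$ is equivalent to $p(Z)\ge k+1$, hence to $Z$ being $(k+1)$-Du Bois. This is false. Since $p(Z)=\lfloor\widetilde{\alpha}(Z)\rfloor-1$, the hypothesis $\widetilde{\alpha}(Z)>k+1$ only yields $\lfloor\widetilde{\alpha}(Z)\rfloor\ge k+1$, i.e.\ $p(Z)\ge k$; for instance $\widetilde{\alpha}(Z)=k+\tfrac{3}{2}$ gives $p(Z)=k$. Equivalently, $(k+1)$-Du Bois is $\widetilde{\alpha}(Z)\ge k+2$, which is strictly stronger than the hypothesis. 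Your entire plan for part~ii) rests on the Du Bois vanishing $\cH^q(\underline{\Omega}_Z^p)=0$ for $p\le k+1$, so with only $k$-Du Bois you lose exactly the case $p=k+1$ that is the new content of ii). Likewise, the passage from the $(-E)$-twisted sheaves (governed by the Steenbrink triangle and \theoremref{marginal-LCI}) to the untwisted ones via the residue sequence is not a harmless bookkeeping step: you have to control $R^q\pi_*\Omega_E^p$ for the SNC divisor $E$, and you give no mechanism for this beyond a vague ``inductively''. For part~iii) the same index mismatch bites again: \theoremref{marginal-LCI} at form degree $k+1$ yields $R^{n-k-1}\pi_*\Omega_Y^{k+1}(\log E)(-E)=0$, not the required $R^{n-k-2}\pi_*\Omega_Y^{k+2}(\log E)=0$, and you do not explain how to bridge the gap.

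The paper does something genuinely different and this is precisely where the hypothesis $\widetilde{\alpha}(Z)>k+1$ (strict inequality, not $\ge k+2$) is used in full. Rather than going through $\underline{\Omega}_Z^p$ and the $(-E)$-twist, the paper identifies $\derR\pi_*\Omega_Y^p(\log E)[n-p]$ directly with $\Gr^F_{-p}\DR_X\omega_X(*Z)$ and reduces to vanishing of $\cH^{p+q-n}\Gr^F_{-p}\DR_X\cH^1_Z(\omega_X)$. It then applies duality for mixed Hodge modules together with Saito's duality for vanishing cycles to compute $\mathbf{D}\big(\cH^1_Z(\omega_X)\big)$ as the kernel of $\cdot\partial_t\colon\Gr^V_{-1}B_f(n+1)\to\Gr^V_0B_f(n)$. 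The condition $\widetilde{\alpha}(Z)>k+1$ translates, via the $V$-filtration characterization of the minimal exponent, into the \emph{explicit} descriptions $\Gr^F_{i-n}\Gr^V_0B_f=0$ for $i\le k+1$ and $\Gr^F_{i-n}\Gr^V_{-1}B_f\simeq\omega_X\otimes\shO_Z$ for $0\le i\le k$; a spectral-sequence $\shExt$ computation then kills the relevant terms. This $V$-filtration input is exactly what is sensitive to the strict inequality and is invisible from the Du Bois side alone.
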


Part i) is known to hold for an arbitrary hypersurface $Z$ in $X$; cf. ($\ref{eq_vanishing_sum_n}$) below.
The case when $p=k+2$ and $q=n-k-2$ is the content of \cite[Corollary~C]{MP1}. 
In what follows we will show that (a simplified version of) the argument in \emph{loc}.~\emph{cit}. is enough to give the vanishing in ii) as well.

\begin{remark}\label{rmk_thm_vanishing}
We note that if $Z$ is smooth, then all the vanishings in Theorem~\ref{thm_vanishing_log} are well known; see for example \cite[Theorem~31.1(i)]{MP0}.
On the other hand, if $Z$ is singular, we have $\widetilde{\alpha}(Z)\leq\tfrac{n}{2}$ by \cite[Theorem~0.4]{Saito_microlocal}. If $\widetilde{\alpha}(Z)>k+1$,
then $k\leq \tfrac{n-3}{2}$, 
hence if $q\geq n-k-2$, we automatically have $q\geq 1$. 
\end{remark} 

We begin by relating the higher direct images of the sheaves of log differentials on $Y$ to certain mixed Hodge modules on $X$.
Recall that $\omega_X(*Z)$ is the right Hodge module whose underlying $\Dmod_X$-module is the module of top rational differentials on $X$
with poles along $Z$. It is a basic fact that for every $p$ we have an isomorphism
\begin{equation}\label{eq_push_forward1}
\derR \pi_*\Omega_Y^p(\log\,E)[n-p]\simeq {\rm Gr}^F_{-p}{\rm DR}_X\big(\omega_X(*Z)\big).
\end{equation}
One way to see this is by using the explicit filtered resolution of $\omega_Y(*E)$ in \cite[Proposition~3.1]{MP0} to get an
isomorphism
$$\Omega_Y^p(\log\,E)[n-p]\simeq {\rm Gr}^F_{-p}{\rm DR}_Y\big(\omega_Y(*E)\big).$$
The isomorphism in (\ref{eq_push_forward1}) then follows from the fact that $\omega_X(*Z)$ is the push-forward of $\omega_Y(*E)$ (in the category of 
Hodge modules), using Saito's Strictness Theorem \cite[Section~2.3.7]{Saito-MHP}; cf.\ \cite[Section C.4]{MP0}. 

Since the complex on the right-hand side of 
(\ref{eq_push_forward1}) is placed in nonpositive degrees, we immediately deduce 
the following vanishing result
\begin{equation}\label{eq_vanishing_sum_n}
R^q\pi_*\Omega_Y^p(\log\,E)=0\quad\text{for}\quad p+q>n
\end{equation}
(cf. \cite[Corollary~3]{Saito-LOG} and \cite[Theorem~32.1]{MP0}). In other words, part i) in Theorem
\ref{thm_vanishing_log} holds for an arbitrary hypersurface $Z$ in $X$.

\begin{lemma}\label{van_for_loc_coh}
With the notation above, we have $R^q\pi_*\Omega_Y^p(\log\,E)=0$ for $q \ge 1$ and arbitrary $p$,
provided that 
$$\cH^{p+q-n}\big({\rm Gr}^F_{-p}{\rm DR}_X\big(\cH^1_Z(\omega_X)\big)\big)=0.$$
\end{lemma}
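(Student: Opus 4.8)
The plan is to realize $R^q\pi_*\Omega_Y^p(\log\,E)$ as a cohomology sheaf of ${\rm Gr}^F_{-p}{\rm DR}_X\big(\omega_X(*Z)\big)$, via the isomorphism (\ref{eq_push_forward1}), and then to compare it with the graded de Rham complexes of the trivial Hodge module $\omega_X$ and of $\cH^1_Z(\omega_X)$ through a single short exact sequence of Hodge modules.

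First I would record the localization short exact sequence. Since $Z$ is a hypersurface, the local cohomology sheaves $\cH^j_Z(\omega_X)$ vanish for $j\neq 1$, and $\cH^1_Z(\omega_X)=\coker\big(\omega_X\to\omega_X(*Z)\big)$; this yields a short exact sequence
\[
0\longrightarrow \omega_X\longrightarrow \omega_X(*Z)\longrightarrow \cH^1_Z(\omega_X)\longrightarrow 0
\]
of right $\Dmod_X$-modules underlying mixed Hodge modules. Because morphisms of mixed Hodge modules are strictly compatible with the Hodge filtration, this is a \emph{strict} short exact sequence of filtered $\Dmod_X$-modules. Applying the exact functor ${\rm Gr}^F_{-p}{\rm DR}_X$ then produces a short exact sequence of complexes
\[
0\longrightarrow {\rm Gr}^F_{-p}{\rm DR}_X(\omega_X)\longrightarrow {\rm Gr}^F_{-p}{\rm DR}_X\big(\omega_X(*Z)\big)\longrightarrow {\rm Gr}^F_{-p}{\rm DR}_X\big(\cH^1_Z(\omega_X)\big)\longrightarrow 0.
\]

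Next I would identify each term. Using the description of ${\rm Gr}^F{\rm DR}_X$ of a right Hodge module recalled above, together with the fact that ${\rm gr}^F_i\omega_X=0$ for $i\neq -n$, only one term of the complex survives, giving ${\rm Gr}^F_{-p}{\rm DR}_X(\omega_X)\simeq \Omega_X^p[n-p]$. The middle term is $\derR\pi_*\Omega_Y^p(\log\,E)[n-p]$ by (\ref{eq_push_forward1}), and the third is exactly the object appearing in the hypothesis. Taking the long exact sequence in cohomology and extracting the segment in degree $p+q-n$ gives
\[
\cH^{p+q-n}\big({\rm Gr}^F_{-p}{\rm DR}_X(\omega_X)\big)\longrightarrow R^q\pi_*\Omega_Y^p(\log\,E)\longrightarrow \cH^{p+q-n}\big({\rm Gr}^F_{-p}{\rm DR}_X(\cH^1_Z(\omega_X))\big).
\]
Here the left-hand term is $\cH^q(\Omega_X^p)$, which vanishes for $q\geq 1$ since $\Omega_X^p$ is a sheaf placed in degree $0$, while the right-hand term vanishes by hypothesis; exactness then forces $R^q\pi_*\Omega_Y^p(\log\,E)=0$.

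I expect the only delicate points to be bookkeeping rather than substance: verifying that $\omega_X\to\omega_X(*Z)$ is genuinely the inclusion of the trivial Hodge module carrying its standard filtration (so that strictness applies and ${\rm Gr}^F_{-p}{\rm DR}_X(\omega_X)$ is computed correctly), and matching the cohomological shifts so that $\cH^{p+q-n}$ lands precisely on $R^q\pi_*\Omega_Y^p(\log\,E)$ in the middle and on $\cH^q(\Omega_X^p)$ on the left. Once these shifts are pinned down, the conclusion is immediate from the three-term exactness.
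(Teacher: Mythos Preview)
Your proof is correct and follows essentially the same route as the paper: both use the short exact sequence $0\to\omega_X\to\omega_X(*Z)\to\cH^1_Z(\omega_X)\to 0$ of filtered $\Dmod_X$-modules underlying mixed Hodge modules, apply ${\rm Gr}^F_{-p}{\rm DR}_X(-)$ together with (\ref{eq_push_forward1}) to obtain an exact triangle $\Omega_X^p[n-p]\to\derR\pi_*\Omega_Y^p(\log E)[n-p]\to{\rm Gr}^F_{-p}{\rm DR}_X(\cH^1_Z(\omega_X))\overset{+1}\to$, and conclude by passing to cohomology. Your version spells out the strictness and the shift bookkeeping more explicitly, but there is no substantive difference.
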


\begin{proof}
We have a short exact sequence of filtered right $\Dmod_X$-modules underlying mixed Hodge modules
$$0\longrightarrow \omega_X \longrightarrow \omega_X(*Z)\longrightarrow \cH_Z^1(\omega_X)\longrightarrow 0,$$
where the underlying $\Dmod_X$-module of $\cH_Z^1(\omega_X)$ is the first local cohomology module of $\omega_X$ along $Z$.
By taking ${\rm Gr}^F_{-p}{\rm DR}_X(-)$,  using (\ref{eq_push_forward1}) we obtain an exact triangle
$$\Omega_X^p[n-p]\longrightarrow \derR \pi_*\Omega_Y^p(\log\,E)[n-p]\longrightarrow {\rm Gr}^F_{-p}{\rm DR}_X\big(\cH^1_Z(\omega_X)\big)\overset{+1}\longrightarrow.$$
This immediately implies the statement by passing to cohomology.
\end{proof}

The key point for the proof of Theorem \ref{thm_vanishing_log} is the use of duality. Recall that by the compatibility between duality and the graded de Rham complex (see \cite[Sections~2.4.5 and 2.4.11]{Saito-MHP}),
for every $p\in\ZZ$, we have an isomorphism 
$$
{\rm Gr}^F_{-p}{\rm DR}_X\big(\cH^1_Z(\omega_X)\big)\simeq \derR {\mathcal Hom}_{\shO_X}\big({\rm Gr}^F_{p}{\rm DR}_X{\mathbf D}(\cH^1_Z(\omega_X)),\omega_X[n]\big),
$$
and thus 
\begin{equation}\label{eq_isom_dual}
\cH^{p+q-n}\big({\rm Gr}^F_{-p}{\rm DR}_X\big(\cH^1_Z(\omega_X)\big)\big)\simeq {\mathcal Ext}_{\shO_X}^{p+q}\big({\rm Gr}_p^F{\rm DR}_X{\mathbf D}(\cH^1_Z(\omega_X)),\omega_X\big).
\end{equation}

We compute ${\mathbf D}(\cH^1_Z(\omega_X))$ using the $V$-filtration. We work locally and suppose that $Z$ is a singular hypersurface in $X$ defined by $f\in\shO_X(X)$. 
Recall that if $\iota\colon X\to X\times\CC$ is the graph embedding $\iota(x)=\big(x,f(x)\big)$, then the $V$-filtration of Malgrange and Kashiwara 
is an increasing, exhaustive, discrete, and right-continuous filtration on 
$$B_f:=\iota_+\omega_X=\bigoplus_{j\geq 0}\omega_X\partial_t^j$$ 
indexed by rational numbers and characterized by a few conditions
(see for example \cite[Section~3.1]{Saito-MHP}). For example, if $t$ is the coordinate on $\CC$, then $V_{\alpha}B_f\cdot t\subseteq V_{\alpha-1}B_f$
and $V_{\alpha}B_f\cdot\partial_t\subseteq V_{\alpha+1}B_f$. 
For every $\alpha \in \QQ$, we put ${\rm Gr}^V_{\alpha}B_f=V_{\alpha}B_f/V_{<\alpha}B_f$. 
We note that the Hodge filtration on $B_f$ is given by
$$F_{p-n}B_f=\bigoplus_{j\leq p}\omega_X\partial_t^j.$$

By \cite[Section~2.24]{Saito-MHM},
we have a short exact sequence
$$0\to {\rm Gr}_0^VB_f\overset{\cdot t}\longrightarrow{\rm Gr}^V_{-1}B_f\to \cH^1_Z(\omega_X)\to 0,$$
 which after taking duals gives the exact sequence
\begin{equation}\label{eq_duality1}
0\to {\mathbf D}\big(\cH^1_Z(\omega_X)\big)\to {\mathbf D}({\rm Gr}^V_{-1}B_f)\to {\mathbf D}({\rm Gr}^V_{0}B_f)\to 0.
\end{equation}
On the other hand, by  \cite[Theorem~1.6]{Saito_duality} we have isomorphisms
$${\mathbf D}({\rm Gr}_{-1}^VB_f)\simeq {\rm Gr}_{-1}^VB_f(n+1)\quad\text{and}\quad {\mathbf D}({\rm Gr}_0^VB_f)\simeq {\rm Gr}^V_0B_f(n)$$
such that the exact sequence (\ref{eq_duality1}) becomes
\begin{equation}\label{eq_duality2}
0\to {\mathbf D}\big(\cH^1_Z(\omega_X)\big)\to {\rm Gr}^V_{-1}B_f(1+n)\overset{\cdot(-\partial_t)}\longrightarrow {\rm Gr}^V_{0}B_f(n)\to 0.
\end{equation}
By taking the corresponding graded piece of the de Rham complex, we obtain the short exact sequence of complexes
\begin{equation}\label{eq_duality3}
0\to {\rm Gr}^F_p{\rm DR}_X{\mathbf D}\big(\cH^1_Z(\omega_X)\big)\to {\rm Gr}^F_{p-n-1}{\rm DR}_X{\rm Gr}^V_{-1}B_f\overset{\cdot\partial_t}\longrightarrow
{\rm Gr}^F_{p-n}{\rm DR}_X{\rm Gr}^V_{0}B_f\to 0.
\end{equation}

The connection between the minimal exponent and the $V$-filtration is provided by the following result due to Saito, see \cite[(1.3.8)]{Saito-MLCT}:
if $q$ is a nonnegative integer and $\alpha\in (0,1]$ is a rational number, then 
\begin{equation}\label{eq_char_min_exp}
\widetilde{\alpha}(Z)\geq q+\alpha \iff \omega_X\partial_t^q\subseteq V_{-\alpha}B_f \,\,(\iff F_{q-n}B_f \subseteq V_{-\alpha}B_f).
\end{equation}
(Note that the formulation in \emph{loc}.~\emph{cit}. is in terms of the left $\Dmod_X$-module $B_f^{\ell}$ corresponding to our $B_f$, and with respect to the decreasing version of the 
$V$-filtration, related to the one in this paper by $V_{\alpha}B_f=\omega_X\otimes V^{-\alpha}B_f^{\ell}$.) 
Therefore the condition that $\widetilde{\alpha}(Z)>k+1$ is equivalent to $F_{k+1-n}B_f\subseteq V_{<0}B_f$. Note that this implies 
\begin{equation}\label{eq5_char_min_exp}
{\rm Gr}^F_{i-n}{\rm Gr}^V_0B_f=0\quad\text{for all}\quad  i\leq k+1.
\end{equation}
Moreover, it was shown in \cite[Proposition~4.5]{MP1} that (after translating from left to right $\Dmod_X$-modules) 
the condition $\widetilde{\alpha}(Z)>k+1$ implies
\begin{equation}\label{eq2_char_min_exp}
{\rm Gr}^F_{i-n}{\rm Gr}^V_{-1}B_f\simeq \omega_X \otimes_{\shO_X} \shO_X/(f)\quad\text{for}\quad 0\leq i\leq k\quad\text{and}
\end{equation}
\begin{equation}\label{eq3_char_min_exp}
{\rm Gr}^F_{k+1-n}{\rm Gr}^V_{-1}B_f\simeq \omega_X \otimes_{\shO_X} J/(f),
\end{equation}
where $J=\{h\in\shO_X\mid \omega_X\cdot h\partial_t^{k+1}\subseteq V_{-1}B_f\}$. We also note that
\begin{equation}\label{eq4_char_min_exp}
{\rm Gr}^F_{i-n}{\rm Gr}^V_{-1}B_f=0\quad\text{for}\quad i<0
\end{equation}
since $F_{i-n}B_f=0$ for $i<0$. 

After these preparations we can give the proof of our vanishing result.

\begin{proof}[Proof of Theorem~\ref{thm_vanishing_log}]
The vanishing in i) follows from (\ref{eq_vanishing_sum_n}). Since all assertions are local on $X$, to approach the rest we may assume that $Z$ is defined by $f\in\shO_X(X)$. We may also assume that
$Z$ is singular; see Remark~\ref{rmk_thm_vanishing}.

We begin by recalling that if $A^{\bullet}$ is a complex of $\shO_X$-modules, then we have a spectral sequence
$$E_1^{i,j}={\mathcal Ext}_{\shO_X}^j(A^{-i},\omega_X)\Rightarrow {\mathcal Ext}_{\shO_X}^{i+j}(A^{\bullet},\omega_X).$$
We thus obtain for every $m\in\ZZ$
\begin{equation}\label{eq1_pf_thm_vanishing}
{\mathcal Ext}^m_{\shO_X}(A^{\bullet},\omega_X)=0\quad\text{if}\,\,\,\,\,\,{\mathcal Ext}_{\shO_X}^{m+i}(A^i,\omega_X)=0\,\,\text{for all}\,\,i\in\ZZ.
\end{equation}

We also recall that by Lemma~\ref{van_for_loc_coh}, in order to prove that
$R^q\pi_*\Omega_X^p(\log\,E)=0$ for some $p$ and some $q \ge 1$, it is enough to verify that
$$\cH^{p+q-n}\big({\rm Gr}^F_{-p}{\rm DR}_X(\cH_Z^1(\omega_X))\big)=0.$$
Furthermore, the isomorphism (\ref{eq_isom_dual}) implies that if
$$\Mmod_{p,q}:={\mathcal Ext}_{\shO_X}^{p+q}\big({\rm Gr}^F_p{\rm DR}_X{\mathbf D}(\cH_Z^1(\omega_Z)),\omega_X\big),$$
then it is enough to show that $\Mmod_{p,q}=0$. 

Let's now prove  ii). Suppose that $p\leq k+1$ and $q\geq 1$. 
The long exact sequence associated to the exact sequence of complexes (\ref{eq_duality3}) gives an exact sequence
\begin{equation}\label{eq2_pf_thm_vanishing}
{\mathcal Ext}^{p+q}_{\shO_X}\big({\rm Gr}^F_{p-n-1}{\rm DR}_X{\rm Gr}^V_{-1}B_f,\omega_X\big)\to\Mmod_{p,q}\to {\mathcal Ext}^{p+q+1}_{\shO_X}\big({\rm Gr}^F_{p-n}{\rm DR}_X{\rm Gr}^V_0B_f,\omega_X\big).
\end{equation}
Since ${\rm Gr}^F_{i-n}{\rm Gr}_0^VB_f=0$ for all $i\leq p\leq k+1$ by (\ref{eq5_char_min_exp}), it follows that 
$${\rm Gr}^F_{p-n}{\rm DR}_X{\rm Gr}^V_0B_f=0,$$ 
hence the third term
in (\ref{eq2_pf_thm_vanishing}) is $0$. On the other hand, if $A^{\bullet}={\rm Gr}^F_{p-n-1}{\rm DR}_X{\rm Gr}^V_{-1}B_f$, then $A^i=0$ unless 
$1-p\leq i\leq 0$. Since each $A^i$ is a locally free $\shO_Z$-module by (\ref{eq2_char_min_exp}), we have ${\mathcal Ext}^j_{\shO_X}(A^i,\omega_X)=0$ for all $j\geq 2$.
But for $i\geq 1-p$ we have $p+q+i\geq q+1\geq 2$, hence it follows that
$${\mathcal Ext}_{\shO_X}^{p+q+i}(A^i,\omega_X)=0 \,\,\,\,\,\,{\rm  for ~all} \,\,\,\, i\in\ZZ.$$ 
We thus conclude, using (\ref{eq1_pf_thm_vanishing}), that the first term in (\ref{eq2_pf_thm_vanishing}) is $0$ as well. Therefore $\Mmod_{p,q}=0$, which completes the proof of ii).

Suppose now that $p=k+2$ and $q=n-k-2\geq 1$ and let's prove that again $\Mmod_{p,q}=0$. 
The long exact sequence associated with the exact sequence of complexes (\ref{eq_duality3}) gives an exact sequence
\begin{equation}\label{eq_3_pf_thm_vanishing}
{\mathcal Ext}_{\shO_X}^n\big({\rm Gr}^F_{k-n+1}{\rm DR}_X{\rm Gr}^V_{-1}B_f,\omega_X\big)\to\Mmod_{p,q}\to 
 {\mathcal Ext}^{n+1}_{\shO_X}\big({\rm Gr}^F_{k-n+2}{\rm DR}_X{\rm Gr}^V_0B_f,\omega_X\big).
\end{equation}
Note first that (\ref{eq5_char_min_exp}) implies that ${\rm Gr}^F_{k-n+2}{\rm DR}_X{\rm Gr}^V_0B_f$ is concentrated in cohomological degree $0$.
Since $\dim(X)=n$, it follows that the third term in (\ref{eq_3_pf_thm_vanishing}) is 0. 

We now consider the complex $A^{\bullet}={\rm Gr}^F_{k-n+1}{\rm DR}_X{\rm Gr}^V_{-1}B_f$. We have $A^i=0$ unless $-k-1\leq i\leq 0$ and 
it follows from (\ref{eq2_char_min_exp}) that $A^i$ is a free $\shO_Z$-module for $i\leq -1$, while $A^0\simeq 
\omega_X\otimes_{\shO_X} J/(f)$. 
For $-k-1\leq i\leq -1$, we thus have $n+i\geq n-k-1\geq 2$, hence ${\mathcal Ext}_{\shO_X}^{n+i}(A^i,\omega_X)=0$. 
Moreover, it follows from (\ref{eq2_char_min_exp}) that we have an exact sequence
$$0\to A^0 \otimes_{\shO_X} \omega_X^{-1} \to\shO_Z\to\shO_X/J\to 0,$$
which immediately gives ${\mathcal Ext}^n_{\shO_X}(A^0,\omega_X)=0$. Using 
(\ref{eq1_pf_thm_vanishing}) one more time, we conclude that the first term in (\ref{eq_3_pf_thm_vanishing}) is also $0$, and thus $\Mmod_{p,q}=0$. This completes the proof of the theorem. 
\end{proof}

We next deduce the local vanishing result for a resolution of $Z$ stated in the introduction.

\begin{proof}[Proof of Theorem~\ref{thm_vanishing}]
Note first that by Lemma~\ref{indep_res2}, the result is independent of the choice of strong log resolution $\mu$. 
Let $\pi\colon Y\to X$ be a strong log resolution of $(X,Z)$ as in Proposition~\ref{prop_log_res} and 
$\mu\colon\widetilde{Z}\to Z$ the induced log resolution of $Z$. Recall that we write 
$$\pi^{-1}(Z)_{\rm red}=E=\widetilde{Z}+F_N \,\,\,\,\,\,{\rm  and} \,\,\,\,\,\, D=F_N\vert_{\widetilde{Z}}=\mu^{-1}(Z_{\rm sing})_{\rm red}.$$

Note that since $\widetilde{\alpha}(Z)>k+1$, it follows from \cite[Proposition~7.4]{MP1} that $r:={\rm codim}_X(Z_{\rm sing})\geq 2k+3$. Thus 
(\ref{eq11_prop_log_res})  in Proposition \ref{prop_log_res} gives 
$$R^q\mu_*\Omega_{\widetilde{Z}}^p(\log D)\simeq R^q\pi_*\Omega_Y^{p+1}(\log\,E)$$
for all $q \ge 1$ and all $0 \le p \le 2k + 1$. This concludes the proof in combination with assertions ii) and iii) 
in Theorem \ref{thm_vanishing_log}.
\end{proof}

\begin{remark}[Isolated singularities]
Under the hypothesis of Theorem \ref{thm_vanishing}, if $Z$ has at most isolated singularities, then for $q\geq \max \{n-k-2, 1\}$ we have
$$R^q\mu_*\Omega_{\widetilde{Z}}^p(\log\,D)=0 \iff (p,q)\neq (n-1,n-2).$$

Indeed, by Remark~\ref{rmk_indep_res} the assertion is independent of the choice of log resolution. Hence we may assume that we are in the setting of Proposition~\ref{prop_log_res}, which implies
$$R^q\mu_*\Omega_{\widetilde{Z}}^p(\log\,D)\simeq R^q\pi_*\Omega_Y^{p+1}(\log\,E)\quad\text{for}\quad p\leq n-2.$$
By Theorem~\ref{thm_vanishing_log}, the right-hand side vanishes for $q\geq \max\{n-k-2,1\}$.

On the other hand, if $p=n-1$, then $\Omega_{\widetilde{Z}}^p(\log\,D)\simeq \omega_{\widetilde{Z}}(D)$ and the short exact sequence
$$0\to \omega_{\widetilde{Z}}\to\omega_{\widetilde{Z}}(D)\to\omega_D\to 0$$
together with Grauert-Riemenschneider vanishing gives
$$R^q\mu_*\omega_{\widetilde{Z}}(D)\simeq R^q\mu_*\omega_D.$$
The right-hand side is a skyscraper sheaf  of length $h^q(D,\omega_D)=h^{n-2-q}(D,\shO_D)$.
We clearly have $h^0(D,\shO_D)\neq 0$ (in fact, we have $h^0(D,\shO_D)=1$ since $Z$ being normal implies that $D$ is connected), while 
$h^i(D,\shO_D)=0$ for $i>0$ since $Z$ has isolated rational singularities; see \cite[Lemma~1.2]{Namikawa}. This proves our assertion.
\end{remark}

The fact that hypersurfaces with minimal exponent $>k+1$ have $k$-rational singularities is an easy consequence.  

\begin{proof}[Proof of Theorem~\ref{main_thm1}]
By Theorem~\ref{thm_vanishing} i) we have 
$$R^q\mu_*\Omega_{\widetilde{Z}}^p(\log\,D)=0 \,\,\,\,\,\,{\rm  for ~all} \,\,\,\, q\geq 1 \,\,\,\,{\rm and} \,\,\,\, 
p\leq k.$$ 

Therefore in order to establish that $Z$ has $k$-rational singularities it is enough to show that
the canonical morphism
\begin{equation}\label{eq_main_thm1}
\Omega_Z^i\to \mu_*\Omega_{\widetilde{Z}}^i(\log\,D)
\end{equation}
is an isomorphism for $i\leq k$. This is clear if $k=0$ by Zariski's Main Theorem, since $Z$ is normal; see Remark~\ref{rmk1_main_thm1}.

Suppose now that $k\geq 1$. Since we already know that $Z$ has rational singularities, the morphism (\ref{eq_main_thm1}) is an isomorphism if and only if
$\Omega_Z^i$ is reflexive; see Remark~\ref{rmk_level_zero} and Lemma \ref{lem:reflexive}. This in turn follows for $i\leq k$ from the fact that $\widetilde{\alpha}(Z)\geq k+1$, according to \cite[Remark~3.5]{MOPW}, and completes the proof of the theorem. 
\end{proof}

\subsection{$k$-rational hypersurface singularities have minimal exponent $>k+1$}
This section is devoted to the proof of the converse statement in Theorem \ref{thm_main}.

\begin{theorem}\label{converse}
If $k$ is a nonnegative integer and $Z$ has $k$-rational singularities, then $\widetilde{\alpha}(Z)>k+1$.
\end{theorem}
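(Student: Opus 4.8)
The plan is to argue by contradiction, reusing the duality machinery from the proof of Theorem~\ref{thm_vanishing_log} but extracting a \emph{non}-vanishing rather than a vanishing. First, by Theorem~\ref{main2} the hypothesis that $Z$ is $k$-rational forces $Z$ to be $k$-Du Bois, so $\widetilde{\alpha}(Z)\ge k+1$ and, by Theorem~\ref{thm-DB-main} and (\ref{eqn:ineq}), $p(Z)\ge k$ and $\dim Z_{\rm sing}\le n-2k-2$; it remains only to exclude the equality $\widetilde{\alpha}(Z)=k+1$. When $k=0$ this is exactly Saito's theorem \cite{Saito-B} that rational singularities have $\widetilde{\alpha}(Z)>1$, so I would assume $k\ge 1$ from now on and work locally, writing $Z=(f=0)$.

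Suppose then that $\widetilde{\alpha}(Z)=k+1$. The key object is the coherent sheaf $B^0:={\rm Gr}^F_{k+1-n}{\rm Gr}^V_0 B_f$. Since $\widetilde{\alpha}(Z)\ge k+1$ gives $\omega_X\partial_t^{k+1}\subseteq V_0B_f$ (apply $\partial_t$ to the inclusion $\omega_X\partial_t^k\subseteq V_{-1}B_f$ coming from (\ref{eq_char_min_exp})), while $\widetilde{\alpha}(Z)=k+1$ means $\omega_X\partial_t^{k+1}\not\subseteq V_{<0}B_f$, the same characterization shows $B^0\neq 0$; and since the minimal exponent is infinite over $Z_{\rm sm}$, the sheaf $B^0$ is supported on $Z_{\rm sing}$, so ${\rm codim}_X{\rm Supp}(B^0)\ge 2k+2$. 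I would now feed $p=k+1$ into the exact sequence of complexes (\ref{eq_duality3}) and apply $\derR{\mathcal Hom}_{\shO_X}(-,\omega_X)$. By (\ref{eq5_char_min_exp}), which for $\widetilde{\alpha}(Z)=k+1$ holds in the range $i\le k$, the complex ${\rm Gr}^F_{k+1-n}{\rm DR}_X{\rm Gr}^V_0B_f$ collapses to $B^0$ placed in degree $0$, while by (\ref{eq2_char_min_exp}) and (\ref{eq4_char_min_exp}) the complex ${\rm Gr}^F_{k-n}{\rm DR}_X{\rm Gr}^V_{-1}B_f$ has locally free $\shO_Z$-terms placed in degrees $0,\ldots,-k$, so its ${\mathcal Ext}^{\bullet}_{\shO_X}(-,\omega_X)$ is concentrated in degrees $1,\ldots,k+1$. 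Combining this with the duality isomorphism (\ref{eq_isom_dual}), the associated long exact sequence yields, for every $q\ge 1$, an isomorphism $\Mmod_{k+1,q}\simeq{\mathcal Ext}^{k+2+q}_{\shO_X}(B^0,\omega_X)$.

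Choosing $q$ so that $k+2+q={\rm codim}_X{\rm Supp}(B^0)$ gives $q\ge k\ge 1$, and since the lowest nonzero ${\mathcal Ext}$ of a nonzero coherent sheaf sits precisely in its codimension, ${\mathcal Ext}^{k+2+q}_{\shO_X}(B^0,\omega_X)\neq 0$; hence $\Mmod_{k+1,q}\neq 0$ for this $q$. By Lemma~\ref{van_for_loc_coh} together with (\ref{eq_isom_dual}) this means $R^q\pi_*\Omega_Y^{k+1}(\log\,E)\neq 0$, and since ${\rm codim}_X(Z_{\rm sing})\ge 2k+2$ the isomorphism (\ref{eq11_prop_log_res}) of Proposition~\ref{prop_log_res} (taken with $W=Z_{\rm sing}$, valid for $i=k\le r-2$) gives $R^q\mu_*\Omega^k_{\widetilde{Z}}(\log\,D)\simeq R^q\pi_*\Omega_Y^{k+1}(\log\,E)\neq 0$. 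This contradicts $k$-rationality by Remark~\ref{rmk_level_zero}, and therefore $\widetilde{\alpha}(Z)>k+1$.

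The hard part is the middle paragraph: verifying that the ${\rm Gr}^V_{-1}$-complex contributes nothing in the relevant cohomological degrees, so that $\Mmod_{k+1,q}$ is genuinely computed by $B^0$ alone, and that the support/codimension bound on $B^0$ forces a nonzero ${\mathcal Ext}$ in a degree $\ge k+3$. The inequality $2k+2\ge k+3$ is exactly what makes the argument run for $k\ge 1$, and is the structural reason why the boundary case $k=0$ has to be imported separately from Saito's theorem.
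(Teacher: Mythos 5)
Your proof is correct, and although it runs on the same engine as the paper's argument (the reduction to showing ${\rm Gr}^F_{k+1-n}{\rm Gr}^V_0B_f=0$, the dualized exact sequence (\ref{eq_duality3}), and the fact that the terms of ${\mathcal Q}_1={\rm Gr}^F_{k-n}{\rm DR}_X{\rm Gr}^V_{-1}B_f$ are locally free $\shO_Z$-modules in degrees $-k,\ldots,0$), the endgame is genuinely different. The paper proves the vanishing directly: it identifies both the dual of ${\mathcal Q}_1$ and the dual of ${\rm Gr}^F_{k+1}{\rm DR}_X{\mathbf D}\big(\cH^1_Z(\omega_X)\big)$ with $\Omega_Z^k[n-1-k]$ --- the former via the Koszul-type resolution ${\mathcal C}^{\bullet}$ of $\Omega_Z^k$ from \cite{MOPW} and the explicit wedge-with-$df$ differentials, the latter via $k$-rationality and Proposition~\ref{prop_log_res} --- and then checks that the induced endomorphism of the reflexive sheaf $\Omega_Z^k$ is an isomorphism generically, hence everywhere. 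You instead argue by contradiction: if $B^0={\rm Gr}^F_{k+1-n}{\rm Gr}^V_0B_f\neq 0$, its support lies in $Z_{\rm sing}$, of codimension $\geq 2k+2$ in $X$, so dualizing pushes its contribution into ${\mathcal Ext}$-degree $\geq 2k+2>k+2$, beyond anything ${\mathcal Q}_1$ can produce; the fact that the lowest nonvanishing ${\mathcal Ext}$ of a nonzero coherent sheaf sits exactly in the codimension of its support then forces $\Mmod_{k+1,q}\neq 0$ for $q={\rm codim}_X{\rm Supp}(B^0)-k-2\geq k\geq 1$, hence a nonzero $R^q\mu_*\Omega_{\widetilde{Z}}^k(\log\,D)$, contradicting $k$-rationality. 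Your route avoids ${\mathcal C}^{\bullet}$, the reflexivity of $\Omega_Z^k$, and the identification of the differentials, and uses only the vanishing half of Remark~\ref{rmk_level_zero} at the contradiction step; the price is that the inequality $2k+2\geq k+3$ fails for $k=0$, so that case must be imported from Saito's theorem, whereas the paper's argument is uniform in $k\geq 0$.

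Two citations should be tightened, though neither is a real gap. First, Lemma~\ref{van_for_loc_coh} as stated gives the implication you do not need (vanishing of the graded de Rham cohomology implies vanishing of $R^q\pi_*$); you need the converse, which holds because the exact triangle in its proof in fact yields an isomorphism $R^q\pi_*\Omega_Y^{k+1}(\log\,E)\simeq \cH^{k+1+q-n}\big({\rm Gr}^F_{-k-1}{\rm DR}_X(\cH^1_Z(\omega_X))\big)$ for every $q\geq 1$, since $\Omega_X^{k+1}$ is a sheaf placed in degree $0$. Second, (\ref{eq2_char_min_exp}) is stated under the hypothesis $\widetilde{\alpha}(Z)>k+1$, which is precisely what you are denying; to get local freeness of all the terms of ${\mathcal Q}_1$ under the weaker hypothesis $\widetilde{\alpha}(Z)\geq k+1$, one applies (\ref{eq2_char_min_exp}) and (\ref{eq3_char_min_exp}) with $k+1$ replaced by $k$ and notes that then $J=\shO_X$ --- exactly the adjustment the paper makes at the corresponding point of its own proof.
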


\begin{proof}
Since $Z$ has $k$-rational singularities, it follows from 
Theorem~\ref{main2} that it has $k$-Du Bois singularities, in which case we get $\widetilde{\alpha}(Z)\geq k+1$ by \cite[Theorem~1]{Saito_et_al}.
Using this, we can argue as in the proof of Theorem~\ref{thm_vanishing_log} to show that in fact 
$\widetilde{\alpha}(Z)>k+1$.
Since the assertion we want to prove is local on $X$, we may and will assume that $Z$ is defined by some $f\in\shO_X(X)$. We also assume that 
$Z$ is singular, since otherwise the assertion is trivial. We use the notation from the previous section.

\noindent
\emph{Claim.} 
To prove the theorem, it is enough to show that 
\begin{equation}\label{eq0_converse}
{\rm Gr}^F_{k-n+1}{\rm DR}_X{\rm Gr}^V_0B_f=0
\end{equation}
in the derived category of coherent sheaves on $X$. To this end, we record two facts that use  (\ref{eq_char_min_exp}): first, the known inequality $\widetilde{\alpha}(Z)\geq k+1$ implies that $F_{k-n}B_f\subseteq V_{-1}B_f$; after applying $\partial_t$ this 
also implies $F_{k-n+1}B_f\subseteq V_0B_f$. Second, proving that 
$\widetilde{\alpha}(Z)>k+1$ is equivalent to showing that $F_{k-n+1}B_f\subseteq V_{<0}B_f$. 

Since we already know that $F_{k-n}B_f\subseteq V_{<0}B_f$
and $F_{k-n+1}B_f\subseteq V_0B_f$, it is in turn enough to show that ${\rm Gr}^F_{k-n+1}{\rm Gr}^V_0B_f=0$. Finally, this holds if and only if we have the identity in ($\ref{eq0_converse}$): indeed, since ${\rm Gr}^F_i{\rm Gr}^V_0B_f=0$ for all $i\leq k-n$, the graded piece of de Rham complex we are interested in 
is concentrated in cohomological degree $0$, where the entry is ${\rm Gr}^F_{k-n+1}{\rm Gr}^V_0B_f$. This concludes the proof of the Claim.

Recall also that by (\ref{eq_duality3}), we have a short exact sequence of complexes
$$0\longrightarrow {\rm Gr}^F_{k+1}{\rm DR}_X{\mathbf D}\big(\cH^1_Z(\omega_X)\big)\longrightarrow {\mathcal Q}_1\longrightarrow {\mathcal Q}_2\longrightarrow 0,$$
where 
$${\mathcal Q}_1:={\rm Gr}^F_{k-n}{\rm DR}_X{\rm Gr}^V_{-1}B_f\quad\text{and}\quad {\mathcal Q}_2:={\rm Gr}^F_{k-n+1}{\rm DR}_X{\rm Gr}^V_{0}B_f.$$
Since $\derR {\mathcal Hom}_{\shO_X}\big(-,\omega_X[n]\big)$ is a duality, it follows that (\ref{eq0_converse}) holds if and only if the induced morphism
$$\tau\colon \derR {\mathcal Hom}_{\shO_X}\big({\mathcal Q}_1, \omega_X[n]\big)\longrightarrow 
{\mathcal P}:=\derR {\mathcal Hom}_{\shO_X}\big({\rm Gr}^F_{k+1}{\rm DR}_X{\mathbf D}(\cH^1_Z(\omega_X)),\omega_X[n]\big)$$
is an isomorphism. 

We now describe the domain and the target of $\tau$, starting with the target. Note first that by the compatibility between duality and taking the graded pieces of the de Rham complex 
(see  \cite[Sections~2.4.5 and 2.4.11]{Saito-MHP}), we have
$${\mathcal P}\simeq {\rm Gr}^F_{-k-1}{\rm DR}_X\big(\cH^1_Z(\omega_X)\big).$$
We analyze ${\mathcal P}$ more carefully under the $k$-rationality assumption. Let $\pi\colon Y\to X$ be a strong log resolution of $(X,Z)$ as in Proposition~\ref{prop_log_res} and let $\mu\colon\widetilde{Z}\to Z$ be the induced strong log resolution of $Z$. As in the proof of Lemma~\ref{van_for_loc_coh}, we have an exact triangle
$$\Omega_X^{k+1}[n-k-1]\longrightarrow \derR \pi_*\Omega_Y^{k+1}(\log\,E)[n-k-1]
\longrightarrow \mathcal{P} \overset{+1}\longrightarrow.$$
Since $\widetilde{\alpha}(Z)\geq k+1$, it follows from \cite[Lemma~2.1]{MOPW} that ${\rm codim}_X(Z_{\rm sing})\geq 2k+2$. We can thus apply
Proposition~\ref{prop_log_res} and the fact that $Z$ has $k$-rational singularities to deduce that 
$$
R^q\pi_*\Omega_Y^{k+1}(\log\,E)=0\quad\text{for all}\quad q\geq 1\quad\text{and}
$$
$$
\Omega_Z^k\simeq {\rm Coker}\big(\Omega_X^{k+1}\hookrightarrow \pi_*\Omega_Y^{k+1}(\log\,E)\big).
$$
We thus conclude that $\cH^i({\mathcal P})=0$ for all $i\neq k+1-n$ and
$$\cH^{k+1-n}({\mathcal P})\simeq \Omega_Z^k.$$

We next describe the domain of $\tau$. Using the notation in the proof of Theorem \ref{thm_vanishing_log}, we note that the condition $\widetilde{\alpha}(Z)\geq k+1$, being equivalent to $\omega_X \cdot \partial_t^k \subseteq V_{-1} B_f$, can also be interpreted as saying that $J = \shO_X$, where 
$$J=\{h\in\shO_X\mid \omega_X\cdot h\partial_t^k\subseteq V_{-1}B_f\}.$$
Applying the formulas ($\ref{eq2_char_min_exp}$) and ($\ref{eq3_char_min_exp}$) with $k+1$ replaced by $k$, 
we obtain isomorphisms 
$${\rm Gr}_{i-n}^F{\rm Gr}^V_{-1}B_f\simeq \omega_X\otimes_{\shO_X}\shO_X/(f)\quad\text{for all}\quad 
i\leq k.$$
Moreover, it is easy to check that via these isomorphisms the complex 
${\mathcal Q}_1$ becomes isomorphic to ${\mathcal A}^{\bullet}\otimes_{\shO_X}\shO_Z$, where
${\mathcal A}^{\bullet}$ is the complex
$$0\longrightarrow \Omega_X^{n-k}\longrightarrow\cdots\longrightarrow \Omega_X^{n-1}\longrightarrow \omega_X\longrightarrow 0$$
placed in cohomological degrees $-k,\ldots,0$, with each map given by wedging with $df$.
Since ${\mathcal A}^{\bullet}$ is a complex of locally free $\shO_X$-modules, it is well known 
(and easy to show) that we have an isomorphism
$$\derR {\mathcal Hom}_{\shO_X}\big({\mathcal Q}_1,\omega_X[n]\big)\simeq ({\mathcal A}^{\bullet})^{\vee}\otimes_{\shO_X}\omega_Z[n-1].$$

Finally, we record one last consequence of the inequality $\widetilde{\alpha}(Z)\geq k+1$ needed here. 
Namely, under this assumption the sheaf $\Omega_Z^k$ is reflexive;
see \cite[Remark~3.5]{MOPW}. (Note also that $Z$ is normal since it has rational singularities.) 
Furthermore, using the isomorphism $\shO_Z (Z)\simeq\shO_Z$ provided by the global equation $f$ defining $Z$, we know that  $\Omega_Z^k$ has a locally free resolution
given by the following complex ${\mathcal C}^{\bullet}$, placed in degrees $-k,\ldots,0$:
$${\mathcal C}^{\bullet}:\,\,\,\,0\longrightarrow \shO_Z\longrightarrow \Omega_X^1\vert_Z\longrightarrow \cdots\longrightarrow\Omega_X^k\vert_Z\longrightarrow 0,$$
where all maps are given by wedging with $df$; as explained in the proof of \cite[Theorem~1]{MOPW}, this follows from the criterion for the exactness of the Koszul complex in
\cite[Theorem~16.8]{Matsumura}.
 Therefore we have 
$$\derR {\mathcal Hom}_{\shO_X}\big({\mathcal Q}_1,\omega_X[n]\big)\simeq {\mathcal C}^{\bullet}[n-1-k]\simeq\Omega_Z^k[n-1-k].$$

Putting everything together, we see that $\tau$ can be identified with a morphism 
$$\Omega_Z^k[n-1-k]\longrightarrow \Omega_Z^k[n-1-k].$$ 
Since $\Omega_Z^k$ is a reflexive sheaf it is thus enough to check
that $\tau$ is an isomorphism on $X\smallsetminus Z_{\rm sing}$. However, this is clear since all our constructions are compatible with restriction to open subsets, and 
we clearly have ${\mathcal Q}_2\vert_{X\smallsetminus Z_{\rm sing}}=0$. This completes the proof of the theorem.
\end{proof}

\section*{References}
\begin{biblist}

\bib{Bruns}{book}{
   author={Bruns, W.},
   author={Vetter, U.},
   title={Determinantal rings},
   series={Lecture Notes in Mathematics},
   volume={1327},
   publisher={Springer-Verlag, Berlin},
   date={1988},
   pages={viii+236},
}

\bib{Deligne}{article}{
   author={Deligne, P.},
   title={Th\'{e}orie de Hodge. III},
   journal={Inst. Hautes \'{E}tudes Sci. Publ. Math.},
   number={44},
   date={1974},
   pages={5--77},
}

\bib{GNPP}{book}{
   author={Guill\'en, F.},
   author={Navarro Aznar, V.},
   author={Pascual-Gainza, P.},
   author={Puerta, F.},
   title={Hyperr\'esolutions cubiques et descente cohomologique},
   journal={Springer Lect. Notes in Math.},
   number={1335},
   date={1988},
}

\bib{Huber}{article}{
   author={Huber, A.},
   author={J\"{o}rder, C.},
   title={Differential forms in the h-topology},
   journal={Algebr. Geom.},
   volume={1},
   date={2014},
   pages={449--478},
}

\bib{FL1}{article}{
author={Friedman, R.},
author={Laza, R.},
title={Deformations of singular Fano and Calabi-Yau varieties},
journal={preprint arXiv:2203.04823},
date={2022},
}

\bib{FL2}{article}{
author={Friedman, R.},
author={Laza, R.},
title={Higher Du Bois and higher rational singularities},
journal={preprint arXiv:2205.04729, to appear in Duke Math. J.},
date={2022},
}

\bib{FL3}{article}{
author={Friedman, R.},
author={Laza, R.},
title={The higher Du Bois and higher rational properties for isolated singularities},
journal={preprint arXiv:2207.07566, to appear in J. Algebraic Geom.},
date={2022},
}

\bib{GKKP}{article}{
   author={Greb, D.},
   author={Kebekus, S.},
   author={Kov\'{a}cs, S.},
   author={Peternell, T.},
   title={Differential forms on log canonical spaces},
   journal={Publ. Math. Inst. Hautes \'{E}tudes Sci.},
   number={114},
   date={2011},
   pages={87--169},
}

\bib{Hartshorne}{article}{
   author={Hartshorne, R.},
   title={Stable reflexive sheaves},
   journal={Math. Ann.},
   volume={254},
   date={1980},
   number={2},
   pages={121--176},
   }

\bib{Saito_et_al}{article}{
   author={Jung, S.-J.},
   author={Kim, I.-K.},
   author={Saito, M.},
   author={Yoon, Y.},
   title={Higher Du Bois singularities of hypersurfaces},
   journal={Proc. Lond. Math. Soc. (3)},
   volume={125},
   date={2022},
   number={3},
   pages={543--567},
   }

\bib{KS}{article}{
   author={Kebekus, S.},
   author={Schnell, C.},
   title={Extending holomorphic forms from the regular locus of a complex space to a resolution of singularities},
   journal={J. Amer. Math. Soc.},
   volume={34},
   date={2021},
   pages={315--368}
}

\bib{KL}{article}{
   author={Kerr, M.},
   author={Laza, R.},
   title={Hodge theory of degenerations, (II): vanishing cohomology and geometric applications},
   journal={preprint arXiv:2006.03953, to appear in Current Developments in Hodge Theory (Proceedings of Hodge Theory at IMSA)},
   date={2020},
}

\bib{KM}{book}{
   author={Koll{\'a}r, J.},
   author={Mori, S.},
   title={Birational geometry of algebraic varieties},
   series={Cambridge Tracts in Mathematics},
   volume={134},
   note={With the collaboration of C. H. Clemens and A. Corti;
   Translated from the 1998 Japanese original},
   publisher={Cambridge University Press, Cambridge},
   date={1998},
}

\bib{Kovacs1}{article}{
   author={Kov\'{a}cs, S.},
   title={Rational, log canonical, Du Bois singularities: on the conjectures
   of Koll\'{a}r and Steenbrink},
   journal={Compositio Math.},
   volume={118},
   date={1999},
   number={2},
   pages={123--133},
   issn={0010-437X},
}

\bib{KoS}{article}{
   author={Kov\'acs, S.},
   author={Schwede, K.},
   title={Du Bois singularities deform},
   journal={Minimal models and extremal rays (Kyoto, 2011), Adv. Stud. in Pure Math., Math. Soc. Japan},
   volume={70},
   date={2016},
   pages={49--65},
}

\bib{Matsumura}{book}{
   author={Matsumura, H.},
   title={Commutative ring theory},
   series={Cambridge Studies in Advanced Mathematics},
   volume={8},
   edition={2},
   note={Translated from the Japanese by M. Reid},
   publisher={Cambridge University Press, Cambridge},
   date={1989},
   pages={xiv+320},
}

\bib{MOPW}{article}{
   author={Musta\c{t}\u{a}, M.},
   author={Olano, S.},
   author={Popa, M.},
   author={Witaszek, J.},
   title={The Du Bois complex of a hypersurface and the minimal exponent},
   journal={Duke Math. J.},
   volume={172},
   date={2023},
   number={7},
   pages={1411--1436},
}

\bib{MP0}{article}{
      author={Musta\c{t}\u{a}, M.},
      author={Popa, M.},
      title={Hodge ideals},
      journal={Memoirs of the AMS}, 
      volume={262},
      date={2019}, 
      number={1268},
}

\bib{MP3}{article}{
   author={Musta\c{t}\u{a}, M.},
   author={Popa, M.},
   title={Hodge ideals for ${\bf Q}$-divisors, $V$-filtration, and minimal
   exponent},
   journal={Forum Math. Sigma},
   volume={8},
   date={2020},
   pages={Paper No. e19, 41pp},
}

\bib{MP1}{article}{
     author={Musta\c{t}\u{a}, M.},
     author={Popa, M.},
     title={Hodge filtration, minimal exponent, and local vanishing},
     journal={ Invent. Math.},
     volume={220},
     date={2020},
     pages={453--478},
}

\bib{MP2}{article}{
author={Musta\c{t}\u{a}, M.},
author={Popa, M.},
title={Hodge filtration on local cohomology, Du Bois complex, and local cohomological dimension},
journal={ Forum of Math. Pi},
volume={10}
date={2022},
pages={Paper No. e22, 58pp},
}

\bib{Namikawa}{article}{
   author={Namikawa, Y.},
   title={Deformation theory of singular symplectic $n$-folds},
   journal={Math. Ann.},
   volume={319},
   date={2001},
   pages={597--623},
}

\bib{PS}{book}{
   author={Peters, C.},
   author={Steenbrink, J.},
   title={Mixed Hodge structures},
   series={Ergebnisse der Mathematik und ihrer Grenzgebiete. 3. Folge. A
   Series of Modern Surveys in Mathematics [Results in Mathematics and
   Related Areas. 3rd Series. A Series of Modern Surveys in Mathematics]},
   volume={52},
   publisher={Springer-Verlag, Berlin},
   date={2008},
   pages={xiv+470},
}

\bib{Saito-MHP}{article}{
   author={Saito, M.},
   title={Modules de Hodge polarisables},
   journal={Publ. Res. Inst. Math. Sci.},
   volume={24},
   date={1988},
   pages={849--995},
}

\bib{Saito_duality}{article}{
   author={Saito, M.},
   title={Duality for vanishing cycle functors},
   journal={Publ. Res. Inst. Math. Sci.},
   volume={25},
   date={1989},
   pages={889--921},
}

\bib{Saito-MHM}{article}{
   author={Saito, M.},
   title={Mixed Hodge modules},
   journal={Publ. Res. Inst. Math. Sci.},
   volume={26},
   date={1990},
   pages={221--333},
}

\bib{Saito-B}{article}{
   author={Saito, M.},
   title={On $b$-function, spectrum and rational singularity},
   journal={Math. Ann.},
   volume={295},
   date={1993},
   number={1},
   pages={51--74},
}

\bib{Saito_microlocal}{article}{
   author={Saito, M.},
   title={On microlocal $b$-function},
   journal={Bull. Soc. Math. France},
   volume={122},
   date={1994},
   pages={163--184},
}

\bib{Saito-HC}{article}{
   author={Saito, M.},
   title={Mixed Hodge complexes on algebraic varieties},
   journal={Math. Ann.},
   volume={316},
   date={2000},
   pages={283--331},
}

\bib{Saito-LOG}{article}{
   author={Saito, M.},
   title={Direct image of logarithmic complexes and infinitesimal invariants of cycles},
   conference={
      title={Algebraic cycles and motives. Vol. 2},
   },
   book={
      series={London Math. Soc. Lecture Note Ser.},
      volume={344},
      publisher={Cambridge Univ. Press, Cambridge},
   },
   date={2007},
   pages={304--318},
   }

\bib{Saito-MLCT}{article}{
      author={Saito, M.},
	title={Hodge ideals and microlocal $V$-filtration},
	journal={preprint arXiv:1612.08667}, 
	date={2016}, 
}

\bib{Saito-AppendixFL}{article}{
      author={Saito, M.},
	title={Appendix to \cite{FL2}},
	date={2022}, 
}

\bib{Steenbrink2}{article}{
   author={Steenbrink, J. H. M.},
   title={Mixed Hodge structures associated with isolated singularities},
   conference={
      title={Singularities, Part 2},
      address={Arcata, Calif.},
      date={1981},
   },
   book={
      series={Proc. Sympos. Pure Math.},
      volume={40},
      publisher={Amer. Math. Soc., Providence, RI},
   },
   date={1983},
   pages={513--536},
}

\bib{Steenbrink}{article}{
   author={Steenbrink, J.},
   title={Vanishing theorems on singular spaces},
   note={Differential systems and singularities (Luminy, 1983)},
   journal={Ast\'{e}risque},
   number={130},
   date={1985},
   pages={330--341},
}

\end{biblist}

\end{document}